\DeclareFontFamily{OT1}{pzc}{}
\DeclareFontShape{OT1}{pzc}{m}{it}{<-> s * [1.10] pzcmi7t}{}
\DeclareMathAlphabet{\mathpzc}{OT1}{pzc}{m}{it}
\crefname{defin}{Definition}{Definitions}
\crefname{eg}{Example}{Examples}
\crefname{lem}{Lemma}{Lemmas}
\crefname{prop}{Proposition}{Propositions}
\crefname{theo}{Theorem}{Theorems}
\crefname{equation}{}{}
\crefname{enumi}{}{}
\newcommand\N{\mathbb{N}}
\newcommand\Z{\mathbb{Z}}
\newcommand\kk{\Bbbk}
\newcommand\one{\mathbbm{1}}
\newcommand\cC{\mathcal{C}}
\newcommand\cD{\mathcal{D}}
\newcommand\cE{\mathcal{E}}
\newcommand\cM{\mathcal{M}}
\newcommand\fC{\mathfrak{C}}            
\newcommand\fS{\mathfrak{S}}            
\newcommand\gl{\mathfrak{gl}}
\newcommand\go{\mathsf{I}}              
\newcommand{\md}{\textup{-mod}}
\newcommand\leftdual[1]{\prescript{\vee}{}{\! #1}}
\newcommand\Braid{\mathpzc{Braid}}      
\newcommand\cEnd{\mathpzc{End}}         
\newcommand\cH{\mathpzc{H}}             
\newcommand\FOT{\mathpzc{FOT}}          
\newcommand\FT{\mathpzc{FT}}            
\newcommand\OS{\mathpzc{OS}}            
\newcommand\OT{\mathpzc{OT}}            
\newcommand\KS{\mathpzc{KS}}            
\newcommand\Sym{\mathpzc{Sym}}
\newcommand\T{\mathpzc{T}}              
\newcommand\TL{\mathpzc{TL}}            
\DeclareMathOperator{\Aff}{Aff}
\DeclareMathOperator{\End}{End}
\DeclareMathOperator{\Hom}{Hom}
\DeclareMathOperator{\htr}{Tr_h}                 
\DeclareMathOperator{\id}{id}
\DeclareMathOperator{\Mor}{Mor}
\DeclareMathOperator{\Ob}{Ob}
\DeclareMathOperator{\Rot}{Rot}
\DeclareMathOperator{\Span}{span}
\DeclareMathOperator{\vtr}{Tr_v}                  
\DeclareMathOperator{\writhe}{writhe}
\newcommand{\rectcolor}{red!40!white}
\newcommand{\dotlabel}[1]{$\scriptstyle{#1}$}
\newcommand{\braidup}{to[out=up,in=down]}
\newcommand{\braiddown}{to[out=down,in=up]}
\newcommand{\opendot}[1]{\filldraw[fill=white,draw=black] (#1) circle (2pt)}
\newcommand{\posdot}[1]{
    \filldraw[fill=white,draw=black] (#1)+(0.09,0) arc(0:360:0.09);
    \draw (#1)++(-0.09,0) to ++(0.18,0);
    \draw (#1)++(0,-0.09) -- ++(0,0.18)
}
\newcommand{\negdot}[1]{
    \filldraw[fill=white,draw=black] (#1)+(0.09,0) arc(0:360:0.09);
    \draw (#1)++(-0.09,0) to ++(0.18,0)
}
\newcommand{\coupon}[3][0.15]{
    \filldraw[draw=black,fill=white] (#2) circle (#1);
    \node at (#2) {$\scriptscriptstyle{#3}$}
}
\newcommand{\bubright}[1]{
    \draw[->] (#1)+(0.2,0) arc(360:0:0.2)
}
\newcommand{\bubleft}[1]{
    \draw[->] (#1)+(0.2,0) arc(0:360:0.2)
}
\newcommand{\bubun}[1]{
    \draw (#1)+(0.2,0) arc(0:360:0.2)
}
\newcommand{\identify}[4]{
    \draw[red,dashed] (#1,#2) -- (#1,#4);
    \draw[red,dashed] (#3,#2) -- (#3,#4)
}
\newcommand{\poscross}{
    \begin{tikzpicture}[centerzero]
        \draw (0.2,-0.2) -- (-0.2,0.2);
        \draw[wipe] (-0.2,-0.2) -- (0.2,0.2);
        \draw (-0.2,-0.2) -- (0.2,0.2);
    \end{tikzpicture}
}
\newcommand{\negcross}{
    \begin{tikzpicture}[centerzero]
        \draw (-0.2,-0.2) -- (0.2,0.2);
        \draw[wipe] (0.2,-0.2) -- (-0.2,0.2);
        \draw (0.2,-0.2) -- (-0.2,0.2);
    \end{tikzpicture}
}
\newcommand{\posupcross}{
    \begin{tikzpicture}[centerzero]
        \draw[->] (0.2,-0.2) -- (-0.2,0.2);
        \draw[wipe] (-0.2,-0.2) -- (0.2,0.2);
        \draw[->] (-0.2,-0.2) -- (0.2,0.2);
    \end{tikzpicture}
}
\newcommand{\negupcross}{
    \begin{tikzpicture}[centerzero]
        \draw[->] (-0.2,-0.2) -- (0.2,0.2);
        \draw[wipe] (0.2,-0.2) -- (-0.2,0.2);
        \draw[->] (0.2,-0.2) -- (-0.2,0.2);
    \end{tikzpicture}
}
\newcommand{\posrightcross}{
    \begin{tikzpicture}[centerzero]
        \draw[->] (-0.2,-0.2) -- (0.2,0.2);
        \draw[wipe] (0.2,-0.2) -- (-0.2,0.2);
        \draw[<-] (0.2,-0.2) -- (-0.2,0.2);
    \end{tikzpicture}
}
\newcommand{\negrightcross}{
    \begin{tikzpicture}[centerzero]
        \draw[<-] (0.2,-0.2) -- (-0.2,0.2);
        \draw[wipe] (-0.2,-0.2) -- (0.2,0.2);
        \draw[->] (-0.2,-0.2) -- (0.2,0.2);
    \end{tikzpicture}
}
\newcommand{\posdowncross}{
    \begin{tikzpicture}[centerzero]
        \draw[<-] (0.2,-0.2) -- (-0.2,0.2);
        \draw[wipe] (-0.2,-0.2) -- (0.2,0.2);
        \draw[<-] (-0.2,-0.2) -- (0.2,0.2);
    \end{tikzpicture}
}
\newcommand{\negdowncross}{
    \begin{tikzpicture}[centerzero]
        \draw[<-] (-0.2,-0.2) -- (0.2,0.2);
        \draw[wipe] (0.2,-0.2) -- (-0.2,0.2);
        \draw[<-] (0.2,-0.2) -- (-0.2,0.2);
    \end{tikzpicture}
}
\newcommand{\posleftcross}{
    \begin{tikzpicture}[centerzero]
        \draw[->] (0.2,-0.2) -- (-0.2,0.2);
        \draw[wipe] (-0.2,-0.2) -- (0.2,0.2);
        \draw[<-] (-0.2,-0.2) -- (0.2,0.2);
    \end{tikzpicture}
}
\newcommand{\negleftcross}{
    \begin{tikzpicture}[centerzero]
        \draw[<-] (-0.2,-0.2) -- (0.2,0.2);
        \draw[wipe] (0.2,-0.2) -- (-0.2,0.2);
        \draw[->] (0.2,-0.2) -- (-0.2,0.2);
    \end{tikzpicture}
}
\newcommand{\symcross}{
    \begin{tikzpicture}[centerzero]
        \draw[->] (-0.2,-0.2) -- (0.2,0.2);
        \draw[->] (0.2,-0.2) -- (-0.2,0.2);
    \end{tikzpicture}
}
\newcommand{\dotup}{
    \begin{tikzpicture}[centerzero]
        \draw[->] (0,-0.2) -- (0,0.2);
        \opendot{0,0};
    \end{tikzpicture}
}
\newcommand{\dotdown}{
    \begin{tikzpicture}[centerzero]
        \draw[->] (0,0.2) -- (0,-0.2);
        \opendot{0,0};
    \end{tikzpicture}
}
\newcommand{\posdotun}{
    \begin{tikzpicture}[centerzero]
        \draw (0,0.2) -- (0,-0.2);
        \posdot{0,0};
    \end{tikzpicture}
}
\newcommand{\negdotun}{
    \begin{tikzpicture}[centerzero]
        \draw (0,0.2) -- (0,-0.2);
        \negdot{0,0};
    \end{tikzpicture}
}
\newcommand{\rightcup}{
    \begin{tikzpicture}[anchorbase]
        \draw[->] (-0.15,0.15) -- (-0.15,0) arc(180:360:0.15) -- (0.15,0.15);
    \end{tikzpicture}
}
\newcommand{\leftcup}{
    \begin{tikzpicture}[anchorbase]
        \draw[<-] (-0.15,0.15) -- (-0.15,0) arc(180:360:0.15) -- (0.15,0.15);
    \end{tikzpicture}
}
\newcommand{\uncup}{
    \begin{tikzpicture}[anchorbase]
        \draw (-0.15,0.15) -- (-0.15,0) arc(180:360:0.15) -- (0.15,0.15);
    \end{tikzpicture}
}
\newcommand{\rightcap}{
    \begin{tikzpicture}[anchorbase]
        \draw[->] (-0.15,-0.15) -- (-0.15,0) arc(180:0:0.15) -- (0.15,-0.15);
    \end{tikzpicture}
}
\newcommand{\uncap}{
    \begin{tikzpicture}[anchorbase]
        \draw (-0.15,-0.15) -- (-0.15,0) arc(180:0:0.15) -- (0.15,-0.15);
    \end{tikzpicture}
}
\tikzset{anchorbase/.style={>=To,baseline={([yshift=-0.5ex]current bounding box.center)}}}
\tikzset{ 
    centerzero/.style={>=To,baseline={([yshift=-0.5ex](#1))}},
    centerzero/.default={0,0}
}
\tikzset{wipe/.style={white,line width=4pt}}
\newtheorem{theo}{Theorem}[section]
\newtheorem{prop}[theo]{Proposition}
\newtheorem{lem}[theo]{Lemma}
\newtheorem{cor}[theo]{Corollary}
\theoremstyle{definition}
\newtheorem{defin}[theo]{Definition}
\newtheorem{rem}[theo]{Remark}
\newtheorem{eg}[theo]{Example}
\numberwithin{equation}{section}
  \newcommand{\acomments}[1]{
    \ \\
    {\color{red}
      \textbf{AS:} #1
    }
    \ \\
    }
  \newcommand{\question}[1]{
    \ \\
    {\color{blue}
      \textbf{Question:} #1
    }
    \ \\
    }
  \newcommand{\ycomments}[1]{
    \ \\
    {\color{red}
      \textbf{YM:} #1
    }
    \ \\
    }
  \newcommand{\acomments}[1]{}
  \newcommand{\question}[1]{}
  \newcommand{\ycomments}[1]{}
  \newcommand{\details}[1]{
      \ \\
      {\color{OliveGreen}
        \textbf{Details:} #1
      }
      \\
  }
  \newcommand{\details}[1]{}
\begin{document}

\title{Affinization of monoidal categories}

\author{Youssef Mousaaid}
\address[Y.M.]{
  Department of Mathematics and Statistics \\
  University of Ottawa \\
  Ottawa, ON K1N 6N5, Canada
}
\email{ymous016@uottawa.ca}

\author{Alistair Savage}
\address[A.S.]{
  Department of Mathematics and Statistics \\
  University of Ottawa \\
  Ottawa, ON K1N 6N5, Canada
}
\urladdr{\href{https://alistairsavage.ca}{alistairsavage.ca}, \textrm{\textit{ORCiD}:} \href{https://orcid.org/0000-0002-2859-0239}{orcid.org/0000-0002-2859-0239}}
\email{alistair.savage@uottawa.ca}

\begin{abstract}
    We define the \emph{affinization} of an arbitrary monoidal category $\cC$, corresponding to the category of $\cC$-diagrams on the cylinder.  We also give an alternative characterization in terms of adjoining \emph{dot generators} to $\cC$.  The affinization formalizes and unifies many constructions appearing in the literature.  In particular, we describe a large number of examples coming from Hecke-type algebras, braids, tangles, and knot invariants.   When $\cC$ is rigid, its affinization is isomorphic to its horizontal trace, although the two definitions look quite different.  In general, the affinization and the horizontal trace are not isomorphic.
\end{abstract}

\subjclass[2020]{Primary 18M15; Secondary 18M30, 57K14, 57K31}

\keywords{Monoidal category, affinization, string diagram, annulus, cylinder, braid, tangle, skein theory}

\ifboolexpr{togl{comments} or togl{details}}{%
  {\color{magenta}DETAILS OR COMMENTS ON}
}{%
}

\maketitle
\thispagestyle{empty}


\section{Introduction}

The goal of the current paper is to formalize and unify the concept of \emph{affinization} in certain areas of category theory and representation theory.  In particular, we are interested in the following uses of the term \emph{affine}:
\begin{itemize}[wide]
    \item \emph{Topological/diagrammatic}: The term \emph{affine} is often used to refer to the topological or diagrammatic setting of a torus, annulus, or cylinder.  For example, one often encounters the term \emph{affine braids} to refer to braids on a cylinder.  More generally, the term \emph{affine} is often used to describe either monoidal categories drawn in terms of string diagrams on a cylinder or annulus, or where strings are allowed to carry \emph{dots} with various properties.  The description in terms of dots is often ad hoc.  For example, the dots are sometimes equal to their own mates and sometimes they are not.

    \item \emph{Algebraic}: In the context of Hecke-type algebras, the term \emph{affine} refers to the introduction of a (Laurent) polynomial part of the algebra.  (Of course, this can be explained in terms of the Hecke algebra of an affine Weyl group.)  For example, the affine Hecke algebra $H_r^\text{aff}$ of type $A_{r-1}$ over a commutative ground ring $\kk$ is isomorphic, as a $\kk$-module, to $\kk[x_1^{\pm 1}, \dotsc, x_r^{\pm 1}] \otimes_\kk H_r$, where $H_r$ is the Iwahori--Hecke algebra of type $A_{r-1}$.  Allowing $r$ to vary, the affine Hecke algebras can be organized into a tower of algebras, which can be viewed as a monoidal category.  In the string diagram calculus for monoidal categories, the elements $x_i$ correspond to the \emph{dots} mentioned above.

    \item \emph{Representation theoretic}: The term \emph{affine} often appears in the context of duality statements in representation theory.  For example, quantum Schur--Weyl duality states that there is a surjective algebra homomorphism $H_r \to \End_{U_q(\gl_n)}(V^{\otimes r})$, where $V$ is the quantum analogue of the natural representation of $\gl_n$.  This induces an algebra homomorphism from $H_r$ to the endomorphism algebra of the functor $V^{\otimes r} \otimes - \colon U_q(\gl_n)\md \to U_q(\gl_n)\md$.  This homomorphism is not surjective in general.  Instead the braiding on $U_q(\gl_n)\md$ coming from the $R$-matrix allows one to extend it to an algebra homomorphism from $H_r^\text{aff}$ to the endomorphism algebra of the functor $V^{\otimes r} \otimes -$.
\end{itemize}

Preference for one of the languages listed above often depends on one's point of view.  Topologists interested in skein theory and knot invariants will prefer the topological point of view, while representation theorists may prefer the algebraic or representation theoretic point of view.  In some cases, the translation between the languages is well known.  For instance, this is the case for the affine Hecke algebras mentioned above.  Our aim is to completely unify the different languages with a general approach.  The fact that the various viewpoints coincide gives a rich interplay between topology and representation theory.

Our starting point is the definition of the \emph{affinization} $\Aff(\cC)$ of an arbitrary strict monoidal category $\cC$.  In terms of the usual string diagram calculus for monoidal categories, $\Aff(\cC)$ should be thought of as the category of $\cC$-diagrams on the cylinder.  Its definition (\cref{affdef}) involves adjoining to $\cC$ invertible morphisms corresponding to strings wrapping around the cylinder, subject to natural relations.  If $\cC$ is braided, then $\Aff(\cC)$ is a strict monoidal category, with the tensor product corresponding to nesting of cylinders.  Furthermore, in this case, we can give an equivalent definition of the affinization involving the addition of \emph{dot generators} on strands (\cref{plain}).  The equivalence of these two definitions is a very general and precise statement of the correspondence between the topological/diagrammatic and algebraic notions of \emph{affine} described above.  A significant advantage of the description of $\Aff(\cC)$ in terms of dot generators is that string diagrams become easier to draw, since we no longer need to draw them on a cylinder.

We next turn our attention to categorical actions.  If $F \colon \cC \to \cM$ is a monoidal functor, then $\cC$ acts on $\cM$ via the action
\[
    X \cdot M = F(X) \otimes M,\quad f \cdot g = F(f) \otimes g,
\]
for $X \in \Ob(\cC)$, $M \in \Ob(\cM)$, $f \in \Mor(\cC)$, and $g \in \Mor(\cM)$.  If $\cC$ is \emph{balanced} (i.e.\ it is braided and has a twist; see \cref{sec:action}), then we show in \cref{salamander} that this action can be extended in a natural way to the affinization $\Aff(\cC)$.  This yields the precise connection to the representation theoretic viewpoint mentioned above; see \cref{gopher}.

Our affinization procedure recovers a large number of examples appearing in the literature, unifying them into a single precise framework.  Examples include the following:
\begin{itemize}
    \item The affinization of the category of braids over the disc is the category of braids over the annulus (\cref{annularbraids}).

    \item The affinization of the tower of Iwahori--Hecke algebras of type $A$, naturally viewed as a monoidal category, is the tower of affine Hecke algebras of type $A$ (\cref{subsec:Hecke}).

    \item The affinization of the category of oriented tangles (respectively, framed oriented tangles) over the disc is the category of oriented tangles (respectively, framed oriented tangles) over the annulus (\cref{crocodile,alligator}).  Analogous results also hold for unoriented tangles (\cref{kiwi,dodo}).

    \item The affinization of the framed HOMFLYPT skein category over the disc is the framed HOMFLYPT skein category over the annulus (\cref{dragon}).

    \item The affinization of the Kauffman skein category over the disc is the Kauffman skein category over the annulus (\cref{dinosaur}).

    \item The affinization of the Temperley--Lieb category is the affine Temperley--Lieb category (\cref{subsec:TL}).
\end{itemize}
In all of the above examples, our general results give two descriptions of the affine categories, one in terms of string diagrams on the cylinder and the other in terms of string diagrams carrying dots.  Some of these presentations have appeared previously in the literature, while others are new.  (See the body of the paper for references to the literature in each case.)  The appeal of our approach is a completely uniform treatment.

Another construction that has appeared in the literature in the context of monoidal categories on the annulus is the \emph{horizontal trace}.  The horizontal trace $\htr(\cC)$ has the same objects as $\cC$, and its morphisms are equivalences classes of certain morphisms in $\cC$; see \cref{sec:htr}.  We show (\cref{donut}) that, when $\cC$ is rigid (i.e.\ it has left and right duals), the affinization $\Aff(\cC)$ and the horizontal trace $\htr(\cC)$ are isomorphic.  However, even in this case, the two definitions are quite different.  The affinization involves adjoining additional morphisms subject to some natural relations, while the horizontal trace involves equivalence classes of morphisms.  This difference makes the affinization easier to work with in many cases.  In general (i.e.\ when $\cC$ is not rigid), the affinization and the horizontal trace are \emph{not} isomorphic, and it is the affinization, and not the horizontal trace, that gives the correct notion of $\cC$-diagrams on a cylinder for two reasons:  First, the interpretation of morphisms in the horizontal trace as string diagrams on the cylinder (or annulus) involves cups and caps that have no precise meaning when $\cC$ is not rigid.  String diagrams in the affinization avoid such cups and caps.  Second, in specific cases, it is the affinization that gives the expected ``affine'' category.  For example, if $\cC$ is the category of braids over the disc, then $\Aff(\cC)$ is the category of braids over the annulus, while $\htr(\cC)$ is quite different; see \cref{eggs}.

The \emph{vertical trace} $\vtr(\cC)$ of a $\kk$-linear category $\cC$ is the $\kk$-module given by linear combinations of endomorphisms in $\cC$ modulo the relation $f \circ g = g \circ f$ for morphisms $f \colon X \to Y$, $g \colon Y \to X$ in $\cC$; see \cref{tracedef}.  If $\cC$ is strict pivotal, elements of the trace are often drawn as diagrams on the annulus.  In \cref{sec:vtr} we discuss how the procedure of taking the vertical trace behaves with respect to the process of affinization.  In particular, if $\cC$ is a balanced strict $\kk$-linear monoidal category, then $\vtr(\Aff(\cC))$ can be viewed as the category of $\cC$-diagrams on the torus, and it acts naturally on $\vtr(\cC)$.  This action corresponds to placing an annular diagram representing a morphism in $\vtr(\cC)$ inside the toroidal diagram representing a morphism in $\vtr(\Aff(\cC))$; see \cref{fire} for a more general statement.  We also show (\cref{cinnamon})  that if $\cC$ is a right-rigid or left-rigid braided strict monoidal category, then $\vtr(\cC)$ is isomorphic, as a $\kk$-algebra, to the center $Z(\Aff(\cC)) := \End_{\Aff(\cC)}(\one)$ of $\Aff(\cC)$.

We conclude the paper with a brief discussion of how the concept of affinization can be extended to the setting of 2-categories (\cref{sec:2aff}).  For rigid 2-categories, the affinization is again isomorphic to the horizontal trace.  However, the two concepts are different in general.

\subsection*{Acknowledgements}

This research of A.~Savage was supported by Discovery Grant RGPIN-2017-03854 from the Natural Sciences and Engineering Research Council of Canada.  Y.~Mousaaid was also supported by this Discovery Grant.  The authors would like to thank A.~Brochier, J.~Brundan, and P.~Samuelson for helpful conversations.

\section{Affinization of monoidal categories\label{sec:affinize}}

Throughout this paper, we will use the usual calculus of string diagrams for monoidal categories.  We assume all categories are essentially small.  For a category $\cC$, we let $\Ob(\cC)$ denote its set of objects, and $\Mor(\cC)$ its set of morphisms.  We let $1_X$ denote the identity morphism of an object $X$.  We use $\kk$ to denote a commutative ground ring.  By a \emph{monoidal functor}, we mean a strong monoidal functor.  We let $\one$ denote the unit object of a monoidal category.  Throughout this section, $\cC$ denotes a strict monoidal category.
\details{
    Consider the strict monoidal category $\cC_G(A)$ of Example~2.3.6 in \href{http://dx.doi.org/10.1090/surv/205}{Tensor categories} by Etingof, Gelaki, Nikshych, and Ostrik.  Then nontrivial 2-cocycles for the trivial action of $G$ on $A$ give rise to non-strict monoidal functors.  More precisely, given a 2-cocycle $\gamma$, we have the identity functor $\mathcal{F} \colon \cC_G(A) \to \cC_G(A)$, the identity morphism $\epsilon \colon \delta_e \to \delta_e$ (where $e$ is the unit element of the group, so that $\delta_e$ is the unit element of $\cC_G(A)$) and the natural transformation $\mu_{g,h} = \gamma(g,h) \id_{gh} \colon gh = \mathcal{F}(\delta_g) \otimes \mathcal{F}(\delta_h) \to \mathcal{F}(\delta_g \otimes \delta_h) = gh$.  Then $(F,\epsilon,\mu)$ is a strong monoidal functor and, if $\gamma$ is nontrivial, it is not strict.
}

\begin{defin}[Affinization of a strict monoidal category] \label{affdef}
    The \emph{affinization} of a strict monoidal category $\cC$ is the category $\Aff(\cC)$ obtained from $\cC$ by adjoining invertible morphisms $\xi_{X,Y} \colon X \otimes Y \to Y \otimes X$ for each pair of objects $X,Y \in \Ob(\cC)$, subject to the relations
    \begin{gather} \label{coilrel1}
        \xi_{X,Y \otimes Z} = \xi_{Z \otimes X,Y} \circ \xi_{X \otimes Y,Z},
        \\ \label{coilrel2}
        \xi_{X_2,Y_2} \circ (g \otimes f) = (f \otimes g) \circ \xi_{X_1,Y_1},
    \end{gather}
    for all $X,X_1,X_2,Y,Y_1,Y_2,Z \in \Ob(\cC)$, $f \in \Hom_\cC(Y_1,Y_2)$, and $g \in \Hom_\cC(X_1,X_2)$.  We call the $\xi_{X,Y}$ \emph{coils}.  We define $\xi_X := \xi_{\one,X}$.  If $\cC$ is a \emph{$\kk$-linear} strict monoidal category, then $\Aff(\cC)$ is also naturally $\kk$-linear.
\end{defin}

It follows from \cref{coilrel1} that $\xi_{X,\one} = \xi_{X,\one \otimes \one} = \xi_{X,\one} \circ \xi_{X,\one}$ and so, since $\xi_{X,\one}$ is invertible, we have
\begin{equation} \label{cable}
    \xi_{X,\one} = 1_X
    \quad \text{for all } X \in \Ob(\cC).
\end{equation}

In terms of string diagrams, we should picture the morphisms of $\Aff(\cC)$ as string diagrams on a cylinder, where the coil $\xi_{X,Y}$ corresponds to a strand labeled $Y$ wrapping around the cylinder:
\begin{equation} \label{pipe}
    \xi_{X,Y}
    =
    \begin{tikzpicture}[anchorbase]
        \draw[\rectcolor] (-0.4,-0.5) -- (-0.4,0.5);
        \draw[\rectcolor] (0.4,-0.5) -- (0.4,0.5);
        \draw[\rectcolor] (0,0.5) ellipse (0.4 and 0.15);
        \draw[\rectcolor] (-0.4,-0.5) arc (180:360:0.4 and 0.15);
        \draw[dashed,\rectcolor] (-0.4,-0.5) arc (180:0:0.4 and 0.15);
        \draw (-0.2,-0.63) node[anchor=north] {\dotlabel{X}} \braidup (0.2,0.37);
        \draw (0.2,-0.63) node[anchor=north] {\dotlabel{Y}} to[out=up,in=down] (0.4,-0.2);
        \draw[dashed] (0.4,-0.2) to[out=up,in=down] (-0.4,0.2);
        \draw (-0.4,0.2) to[out=up,in=down] (-0.2,0.37);
    \end{tikzpicture}
    \ ,\qquad
    \xi_{X,Y}^{-1}
    =
    \begin{tikzpicture}[anchorbase]
        \draw[\rectcolor] (-0.4,-0.5) -- (-0.4,0.5);
        \draw[\rectcolor] (0.4,-0.5) -- (0.4,0.5);
        \draw[\rectcolor] (0,0.5) ellipse (0.4 and 0.15);
        \draw[\rectcolor] (-0.4,-0.5) arc (180:360:0.4 and 0.15);
        \draw[dashed,\rectcolor] (-0.4,-0.5) arc (180:0:0.4 and 0.15);
        \draw (0.2,-0.63) node[anchor=north] {\dotlabel{X}} \braidup (-0.2,0.37);
        \draw (-0.2,-0.63) node[anchor=north] {\dotlabel{Y}} to[out=up,in=down] (-0.4,-0.2);
        \draw[dashed] (-0.4,-0.2) to[out=up,in=down] (0.4,0.2);
        \draw (0.4,0.2) to[out=up,in=down] (0.2,0.37);
    \end{tikzpicture}
    \ ,\qquad
    \xi_X
    =
    \begin{tikzpicture}[anchorbase]
        \draw[\rectcolor] (-0.4,-0.5) -- (-0.4,0.5);
        \draw[\rectcolor] (0.4,-0.5) -- (0.4,0.5);
        \draw[\rectcolor] (0,0.5) ellipse (0.4 and 0.15);
        \draw[\rectcolor] (-0.4,-0.5) arc (180:360:0.4 and 0.15);
        \draw[dashed,\rectcolor] (-0.4,-0.5) arc (180:0:0.4 and 0.15);
        \draw (0,-0.65) node[anchor=north] {\dotlabel{X}} to[out=up,in=down] (0.4,-0.3);
        \draw[dashed] (0.4,-0.3) to[out=up,in=down] (-0.4,0.1);
        \draw (-0.4,0.1) to[out=up,in=down] (0,0.35);
    \end{tikzpicture}
    \ ,\qquad
    \xi_X^{-1}
    =
    \begin{tikzpicture}[anchorbase]
        \draw[\rectcolor] (-0.4,-0.5) -- (-0.4,0.5);
        \draw[\rectcolor] (0.4,-0.5) -- (0.4,0.5);
        \draw[\rectcolor] (0,0.5) ellipse (0.4 and 0.15);
        \draw[\rectcolor] (-0.4,-0.5) arc (180:360:0.4 and 0.15);
        \draw[dashed,\rectcolor] (-0.4,-0.5) arc (180:0:0.4 and 0.15);
        \draw (0,-0.65) node[anchor=north] {\dotlabel{X}} to[out=up,in=down] (-0.4,-0.3);
        \draw[dashed] (-0.4,-0.3) to[out=up,in=down] (0.4,0.1);
        \draw (0.4,0.1) to[out=up,in=down] (0,0.35);
    \end{tikzpicture}
    \ .
\end{equation}

\begin{rem}
    Instead of considering diagrams on the cylinder, one can also consider a ``pole'' on the right-hand side of the diagrams.  Then, rather than strands wrapping around the cylinder, they wrap around this pole.  However, the monoidal structure to be discussed below is more intuitive from the cylindrical point of view.
\end{rem}

\begin{rem} \label{cylinder-vs-annulus}
    The cylinder is topologically equivalent to the annulus, and, in the literature, one often sees ``affine type'' categories drawn in terms of string diagrams on the annulus.  We choose here to use cylinders, since this allows us to draw the coils without drawing caps and cups (i.e.\ critical points with respect to the vertical coordinate).  As we will recall below, cups and caps in string diagrams typically arise from dual objects.  Thus, in categories that do not necessarily have duals, the cylindrical diagrammatics seem more natural.  See \cref{ht-annulus} for a similar situation.
\end{rem}

In order to make cylindrical string diagrams easier to draw, we cut open the cylinder, drawing $\xi_{X,Y}$ and $\xi_{X,Y}^{-1}$ as the string diagrams
\[
    \xi_{X,Y}
    =
    \begin{tikzpicture}[anchorbase]
        \identify{-0.7}{-0.5}{0.7}{0.5};
        \draw (-0.3,-0.5) node[anchor=north] {\dotlabel{X}} -- (0.3,0.5) node[anchor=south] {\dotlabel{X}};
        \draw (0.3,-0.5) node[anchor=north] {\dotlabel{Y}} to[out=up,in=200] (0.7,0);
        \draw (-0.7,0) to[out=20,in=down] (-0.3,0.5) node[anchor=south] {\dotlabel{Y}};
    \end{tikzpicture}
    \ ,\qquad
    \xi_{X,Y}^{-1}
    =
    \begin{tikzpicture}[anchorbase]
        \identify{-0.7}{-0.5}{0.7}{0.5};
        \draw (0.3,-0.5) node[anchor=north] {\dotlabel{X}} -- (-0.3,0.5) node[anchor=south] {\dotlabel{X}};
        \draw (-0.3,-0.5) node[anchor=north] {\dotlabel{Y}} to[out=up,in=-20] (-0.7,0);
        \draw (0.7,0) to[out=160,in=down] (0.3,0.5) node[anchor=south] {\dotlabel{Y}};
    \end{tikzpicture}
    \ .
\]
where the dashed vertical edges are identified.  Then the relations \cref{coilrel1,coilrel2} become
\begin{equation} \label{whip}
    \begin{tikzpicture}[anchorbase]
        \identify{-0.7}{-0.5}{0.7}{0.5};
        \draw (-0.3,-0.5) node[anchor=north] {\dotlabel{X}} -- (0.3,0.5) node[anchor=south] {\dotlabel{X}};
        \draw (0.3,-0.5) node[anchor=north] {\dotlabel{Y \otimes Z}} to[out=up,in=200] (0.7,0);
        \draw (-0.7,0) to[out=20,in=down] (-0.3,0.5) node[anchor=south] {\dotlabel{Y \otimes Z}};
    \end{tikzpicture}
    =
    \begin{tikzpicture}[anchorbase]
        \identify{-0.8}{-0.5}{0.8}{0.5};
        \draw (-0.3,-0.5) node[anchor=north] {\dotlabel{X}} -- (0.3,0.5) node[anchor=south] {\dotlabel{X}};
        \draw (0.45,-0.5) node[anchor=north] {\dotlabel{Z}} to[out=up,in=200] (0.8,-0.15);
        \draw (0.15,-0.5) node[anchor=north] {\dotlabel{Y}} to[out=up,in=200] (0.8,0.15);
        \draw (-0.8,-0.15) to[out=20,in=down] (-0.15,0.5) node[anchor=south] {\dotlabel{Z}};
        \draw (-0.8,0.15) to[out=20,in=down] (-0.45,0.5) node[anchor=south] {\dotlabel{Y}};
    \end{tikzpicture}
    \ ,\qquad
    \begin{tikzpicture}[anchorbase]
        \identify{-0.6}{-0.5}{0.8}{0.5};
        \draw (-0.3,-0.5) node[anchor=north] {\dotlabel{X_1}} -- (0.3,0.5) node[anchor=south] {\dotlabel{X_2}};
        \draw (0.3,-0.5) node[anchor=north] {\dotlabel{Y_1}} to (0.3,-0.2) to[out=up,in=200] (0.8,0.2);
        \draw (-0.6,0.2) to[out=20,in=down] (-0.3,0.5) node[anchor=south] {\dotlabel{Y_2}};
        \coupon{-0.12,-0.2}{g};
        \coupon{0.3,-0.2}{f};
    \end{tikzpicture}
    =
    \begin{tikzpicture}[anchorbase]
        \identify{-0.8}{-0.5}{0.6}{0.5};
        \draw (-0.3,-0.5) node[anchor=north] {\dotlabel{X_1}} -- (0.3,0.5) node[anchor=south] {\dotlabel{X_2}};
        \draw (0.3,-0.5) node[anchor=north] {\dotlabel{Y_1}} to[out=up,in=200] (0.6,-0.2);
        \draw (-0.8,-0.2) to[out=20,in=down] (-0.3,0.2) -- (-0.3,0.5) node[anchor=south] {\dotlabel{Y_2}};
        \filldraw[draw=black,fill=white] (-0.3,0.2) circle (0.15);
        \coupon{-0.3,0.2}{f};
        \coupon{0.12,0.2}{g};
    \end{tikzpicture}
    \ .
\end{equation}
Analogous relations also hold for the inverses $\xi_{X,Y}^{-1}$.  Intuitively, we can slide morphisms in $\cC$ around the cylinder.

\begin{rem}[Dehn twist] \label{Dehn}
    It follows from \cref{coilrel2} that
    \begin{equation}
        \xi_Y \circ f = f \circ \xi_X, \quad
        f \in \Hom_\cC(X,Y).
    \end{equation}
    In other words, $(\xi_X)_{X \in \Ob(\cC)}$ is a natural transformation of the identity functor $\id_\cC \colon \cC \to \cC$.  In terms of string diagrams, this corresponds to a \emph{Dehn twist} of the cylinder.
\end{rem}

We now wish to endow $\Aff(\cC)$ with the structure of strict monoidal category.  Intuitively, viewing morphisms $f,g$ in $\Aff(\cC)$ as diagrams on the cylinder, the tensor product $f \otimes g$ is given by nesting the cylindrical diagram corresponding to $g$ inside the cylindrical diagram corresponding to $f$.  In order for this to make sense, we need $\cC$ to be a \emph{braided} strict monoidal category, so that we can use the braiding to formalize what it means for strands in one diagram to pass over strands in another diagram.

Recall that a strict monoidal category $\cC$ is \emph{braided} if it is equipped with a natural family of isomorphisms $\beta_{X,Y} \colon X \otimes Y \to Y \otimes X$ satisfying
\begin{equation} \label{braidrel}
    \beta_{X,Y \otimes Z} = (1_Y \otimes \beta_{X,Z}) \circ (\beta_{X,Y} \otimes 1_Z), \quad
    \beta_{X \otimes Y, Z} = (\beta_{X,Z} \otimes 1_Y) \circ (1_X \otimes \beta_{Y,Z}),
\end{equation}
for all $X,Y,Z \in \Ob(\cC)$.  We use the standard string diagrams for the braiding:
\[
    \beta_{X,Y} =
    \begin{tikzpicture}[centerzero]
        \draw (0.3,-0.3) node[anchor=north] {\dotlabel{Y}} -- (-0.3,0.3);
        \draw[wipe] (-0.3,-0.3) -- (0.3,0.3);
        \draw (-0.3,-0.3) node[anchor=north] {\dotlabel{X}} -- (0.3,0.3);
    \end{tikzpicture}
    \ ,\qquad
    \beta_{X,Y}^{-1} =
    \begin{tikzpicture}[centerzero]
        \draw (-0.3,-0.3) node[anchor=north] {\dotlabel{Y}} -- (0.3,0.3);
        \draw[wipe] (0.3,-0.3) -- (-0.3,0.3);
        \draw (0.3,-0.3) node[anchor=north] {\dotlabel{X}} -- (-0.3,0.3);
    \end{tikzpicture}
    \ .
\]
Then the equations in \cref{braidrel} become
\begin{equation} \label{braidextend}
    \begin{tikzpicture}[centerzero]
        \draw (0.5,-0.5) node[anchor=north] {\dotlabel{Y \otimes Z}} -- (-0.5,0.5);
        \draw[wipe] (-0.5,-0.5) -- (0.5,0.5);
        \draw (-0.5,-0.5) node[anchor=north] {\dotlabel{X}} -- (0.5,0.5);
    \end{tikzpicture}
    =
    \begin{tikzpicture}[centerzero]
        \draw (0,-0.5) node[anchor=north] {\dotlabel{Y}} -- (-0.5,0.5);
        \draw (0.5,-0.5) node[anchor=north] {\dotlabel{Z}} -- (0,0.5);
        \draw[wipe] (-0.5,-0.5) -- (0.5,0.5);
        \draw (-0.5,-0.5) node[anchor=north] {\dotlabel{X}} -- (0.5,0.5);
    \end{tikzpicture}
    \ ,\quad
    \begin{tikzpicture}[centerzero]
        \draw (0.5,-0.5) node[anchor=north] {\dotlabel{Z}} -- (-0.5,0.5);
        \draw[wipe] (-0.5,-0.5) -- (0.5,0.5);
        \draw (-0.5,-0.5) node[anchor=north] {\dotlabel{X \otimes Y}} -- (0.5,0.5);
    \end{tikzpicture}
    =
    \begin{tikzpicture}[centerzero]
        \draw (0.5,-0.5) node[anchor=north] {\dotlabel{Z}} -- (-0.5,0.5);
        \draw[wipe] (0,-0.5) -- (0.5,0.5);
        \draw (0,-0.5) node[anchor=north] {\dotlabel{Y}} -- (0.5,0.5);
        \draw[wipe] (-0.5,-0.5) -- (0,0.5);
        \draw (-0.5,-0.5) node[anchor=north] {\dotlabel{X}} -- (0,0.5);
    \end{tikzpicture}
    \ .
\end{equation}

In what follows, we will use unlabeled strands to indicate that a relation holds for any labeling of the strands.  So, for instance,
\[
    \begin{tikzpicture}[centerzero]
        \draw (0,-0.3) -- (0,0.3);
        \coupon{0,0}{f};
    \end{tikzpicture}
    \quad \text{represents an arbitrary morphism} \quad
    \begin{tikzpicture}[centerzero]
        \draw (0,-0.3) node[anchor=north] {\dotlabel{X}} -- (0,0.3) node[anchor=south] {\dotlabel{Y}};
        \filldraw[draw=black,fill=white] (0,0) circle (0.15);
        \node at (0,0) {$\scriptscriptstyle{f}$};
    \end{tikzpicture}
    \quad \text{in } \cC,
\]
and we have
\[
    \begin{tikzpicture}[centerzero]
        \draw (0.25,-0.5) to[out=135,in=down] (-0.2,0) to[out=up,in=225] (0.25,0.5);
        \draw[wipe] (-0.25,-0.5) to[out=45,in=down] (0.2,0) to[out=up,in=-45] (-0.25,0.5);
        \draw (-0.25,-0.5) to[out=45,in=down] (0.2,0) to[out=up,in=-45] (-0.25,0.5);
    \end{tikzpicture}
    \ =\
    \begin{tikzpicture}[centerzero]
        \draw (-0.2,-0.5) -- (-0.2,0.5);
        \draw (0.2,-0.5) -- (0.2,0.5);
    \end{tikzpicture}
    \ =\
    \begin{tikzpicture}[centerzero]
        \draw (-0.25,-0.5) to[out=45,in=down] (0.2,0) to[out=up,in=-45] (-0.25,0.5);
        \draw[wipe] (0.25,-0.5) to[out=135,in=down] (-0.2,0) to[out=up,in=225] (0.25,0.5);
        \draw (0.25,-0.5) to[out=135,in=down] (-0.2,0) to[out=up,in=225] (0.25,0.5);
    \end{tikzpicture}
    \ .
\]
The naturality of the braiding means that
\begin{gather} \label{leopard}
    \begin{tikzpicture}[centerzero]
        \draw (0.5,-0.5) -- (-0.5,0.5);
        \draw[wipe] (0,-0.5) to[out=135,in=down] (-0.4,0) to[out=up,in=225] (0,0.5);
        \draw (0,-0.5) to[out=135,in=down] (-0.4,0) to[out=up,in=225] (0,0.5);
        \draw[wipe] (-0.5,-0.5) -- (0.5,0.5);
        \draw (-0.5,-0.5) -- (0.5,0.5);
    \end{tikzpicture}
    \ =\
    \begin{tikzpicture}[centerzero]
        \draw (0.5,-0.5) -- (-0.5,0.5);
        \draw[wipe] (0,-0.5) to[out=45,in=down] (0.4,0) to[out=up,in=-45] (0,0.5);
        \draw (0,-0.5) to[out=45,in=down] (0.4,0) to[out=up,in=-45] (0,0.5);
        \draw[wipe] (-0.5,-0.5) -- (0.5,0.5);
        \draw (-0.5,-0.5) -- (0.5,0.5);
    \end{tikzpicture}
    \ ,
    \\ \label{lynx}
    \begin{tikzpicture}[centerzero]
        \draw (-0.5,-0.5) -- (0.5,0.5);
        \draw[wipe] (0.5,-0.5) -- (-0.5,0.5);
        \draw (0.5,-0.5) -- (-0.5,0.5);
        \coupon{-0.28,-0.28}{f};
    \end{tikzpicture}
    =
    \begin{tikzpicture}[centerzero]
        \draw (-0.5,-0.5) -- (0.5,0.5);
        \draw[wipe] (0.5,-0.5) -- (-0.5,0.5);
        \draw (0.5,-0.5) -- (-0.5,0.5);
        \coupon{0.28,0.28}{f};
    \end{tikzpicture}
    \ ,\quad
    \begin{tikzpicture}[centerzero]
        \draw (0.5,-0.5) -- (-0.5,0.5);
        \draw[wipe] (-0.5,-0.5) -- (0.5,0.5);
        \draw (-0.5,-0.5) -- (0.5,0.5);
        \coupon{-0.25,-0.25}{f};
    \end{tikzpicture}
    =
    \begin{tikzpicture}[centerzero]
        \draw (0.5,-0.5) -- (-0.5,0.5);
        \draw[wipe] (-0.5,-0.5) -- (0.5,0.5);
        \draw (-0.5,-0.5) -- (0.5,0.5);
        \coupon{0.25,0.25}{f};
    \end{tikzpicture}
    \ ,\quad
    \begin{tikzpicture}[centerzero]
        \draw (-0.5,-0.5) -- (0.5,0.5);
        \draw[wipe] (0.5,-0.5) -- (-0.5,0.5);
        \draw (0.5,-0.5) -- (-0.5,0.5);
        \coupon{0.25,-0.25}{f};
    \end{tikzpicture}
    =
    \begin{tikzpicture}[centerzero]
        \draw (-0.5,-0.5) -- (0.5,0.5);
        \draw[wipe] (0.5,-0.5) -- (-0.5,0.5);
        \draw (0.5,-0.5) -- (-0.5,0.5);
        \coupon{-0.25,0.25}{f};
    \end{tikzpicture}
    \ ,\quad
    \begin{tikzpicture}[centerzero]
        \draw (0.5,-0.5) -- (-0.5,0.5);
        \draw[wipe] (-0.5,-0.5) -- (0.5,0.5);
        \draw (-0.5,-0.5) -- (0.5,0.5);
        \coupon{0.28,-0.28}{f};
    \end{tikzpicture}
    =
    \begin{tikzpicture}[centerzero]
        \draw (0.5,-0.5) -- (-0.5,0.5);
        \draw[wipe] (-0.5,-0.5) -- (0.5,0.5);
        \draw (-0.5,-0.5) -- (0.5,0.5);
        \coupon{-0.28,0.28}{f};
    \end{tikzpicture}
    \ .
\end{gather}
In other words, the braid relations are satisfied, and coupons slide through crossings.

\begin{prop} \label{afftensor}
    If $\cC$ is a braided strict monoidal category, then there is a unique way to extend the tensor product of $\cC$ to $\Aff(\cC)$ such that $\Aff(\cC)$ is a strict monoidal category and
    \begin{equation} \label{stack}
        \xi_{X,Y} \otimes 1_Z = \xi_{X \otimes Z, Y} \circ (1_X \otimes \beta_{Y,Z}), \quad
        1_X \otimes \xi_{Y,Z} = (\beta_{X,Z}^{-1} \otimes 1_Y) \circ \xi_{X \otimes Y,Z},
    \end{equation}
    for all $X,Y,Z \in \Ob(\cC)$.
\end{prop}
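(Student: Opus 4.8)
The plan is to build the tensor product on $\Aff(\cC)$ out of one-sided tensoring functors. On objects there is nothing to do, since $\Ob(\Aff(\cC)) = \Ob(\cC)$ and we retain the strictly associative, unital tensor product of objects inherited from $\cC$. For morphisms, the idea is to first construct, for each object $Z$, endofunctors $R_Z = (- \otimes Z)$ and $L_Z = (Z \otimes -)$ of $\Aff(\cC)$ extending the corresponding functors on $\cC$, and then to set $f \otimes g := R_{Y'}(f) \circ L_X(g)$ for $f \colon X \to X'$ and $g \colon Y \to Y'$.

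To construct $R_Z$, I would use the universal property that $\Aff(\cC)$ enjoys by construction: a functor $\Aff(\cC) \to \cD$ is the same datum as a functor $\cC \to \cD$ together with invertible morphisms of $\cD$ in the roles of the $\xi_{X,Y}$, subject to the images of \cref{coilrel1,coilrel2}. Applying this with $\cD = \Aff(\cC)$, I start from the functor $\cC \hookrightarrow \Aff(\cC)$, $f \mapsto f \otimes 1_Z$, and decree
\[
    \xi_{X,Y} \longmapsto \xi_{X \otimes Z, Y} \circ (1_X \otimes \beta_{Y,Z}),
\]
which is invertible because both factors are. The content is then to check the two defining relations of $\Aff(\cC)$ for these images: verifying the analogue of \cref{coilrel1} uses \cref{coilrel1,coilrel2} together with the hexagon relations \cref{braidrel}, while verifying the analogue of \cref{coilrel2} uses \cref{coilrel2} and the naturality of $\beta$. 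This produces the functor $R_Z$, which by design restricts to $- \otimes Z$ on $\cC$ and sends $\xi_{X,Y}$ to the right-hand side of the first equation of \cref{stack}. A symmetric argument produces $L_Z$ with $\xi_{X,Y} \mapsto (\beta_{Z,Y}^{-1} \otimes 1_X) \circ \xi_{Z \otimes X, Y}$, which matches the second equation of \cref{stack}.

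Granting the $R_Z$ and $L_Z$, a direct verification shows that $f \otimes g := R_{Y'}(f) \circ L_X(g)$ makes $\Aff(\cC)$ a strict monoidal category extending $\cC$, once we establish:
\begin{enumerate}[wide]
    \item $R_\one = L_\one = \id_{\Aff(\cC)}$, using $\beta_{Y,\one} = 1_Y = \beta_{\one,Y}$;
    \item $R_W \circ R_Z = R_{Z \otimes W}$ and $L_Z \circ L_W = L_{Z \otimes W}$, using the hexagon relations \cref{braidrel};
    \item $L_Z \circ R_W = R_W \circ L_Z$ as endofunctors of $\Aff(\cC)$;
    \item the interchange law $R_{Y'}(f) \circ L_X(g) = L_{X'}(g) \circ R_Y(f)$ for all $f \colon X \to X'$, $g \colon Y \to Y'$, i.e.\ $(f \otimes 1_{Y'}) \circ (1_X \otimes g) = (1_{X'} \otimes g) \circ (f \otimes 1_Y)$.
\end{enumerate}
Here (a) gives the strict unit constraint, (b) and (c) together give strict associativity, and (d) gives bifunctoriality of $\otimes$. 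Since (a)--(c) are equalities of functors, each may be checked on the generating morphisms of $\Aff(\cC)$ — morphisms of $\cC$ and the coils $\xi_{X,Y}^{\pm 1}$ — and functoriality of $R$ and $L$ likewise reduces (d) to the cases where $f$ and $g$ are generators. On morphisms of $\cC$ everything collapses to an axiom of a braided monoidal category, and the subcases of (d) with exactly one coil reduce quickly to \cref{coilrel2} and the naturality of $\beta$. I expect the main obstacle to be the subcase of (d) in which both $f$ and $g$ are coils: substituting the formulas for $R$ and $L$ turns it into an identity among composites of coils and braidings that I would verify by repeated application of \cref{coilrel1,coilrel2} and \cref{braidrel}. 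Conceptually, this is the assertion that the two ways of nesting one cylindrical diagram inside another agree, and it is where the hexagon axiom is genuinely used.

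For uniqueness, note that every morphism of $\Aff(\cC)$ is a finite composite of morphisms each of which lies in $\cC$ or equals some $\xi_{X,Y}^{\pm 1}$. In any strict monoidal structure extending that of $\cC$, bifunctoriality and unitality force $f \otimes g = (f \otimes 1_{Y'}) \circ (1_X \otimes g)$, and then $- \otimes Z$ and $Z \otimes -$ are determined on each composite factor — by the tensor product of $\cC$ on factors lying in $\cC$, and by \cref{stack} (together with the fact that a bifunctor preserves inverses) on the coils and their inverses. Hence the extension is unique.
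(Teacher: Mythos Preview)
Your proposal is correct and follows essentially the same strategy as the paper: build the one-sided functors $-\otimes Z$ and $Z\otimes -$ on $\Aff(\cC)$ by specifying their values on morphisms of $\cC$ and on coils, and then set $f\otimes g = (f\otimes 1)\circ(1\otimes g)$. The paper establishes uniqueness exactly as you do and then dismisses existence as ``a straightforward verification''; your outline of that verification via the universal property of $\Aff(\cC)$ and the checklist (a)--(d) is a faithful expansion of what the paper leaves implicit.
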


\begin{proof}
    Any tensor product on $\Aff(\cC)$ must satisfy
    \begin{equation} \label{deer}
        (g \circ f) \otimes 1_W
        = (g \otimes 1_W) \circ (f \otimes 1_W)
    \end{equation}
    for all morphisms $f \colon X \to Y$ and $g \colon Y \to Z$ in $\Aff(\cC)$ and $W \in \Ob(\cC)$.  Since
    \begin{itemize}
        \item $f \otimes 1_W$ must be the tensor product in $\cC$ if $f$ is a morphism in $\cC$, and is given by \cref{stack} for $f$ a coil, and
        \item morphisms in $\cC$, together with coils, generate the morphisms in $\Aff(\cC)$ under composition,
    \end{itemize}
    there is a unique way to define $f \otimes 1_W$ for a morphism $f$ in $\Aff(\cC)$.  Similarly, there is a unique way to define $1_W \otimes f$.  Then, for morphisms $f \colon X \to Y$ and $g \colon Z \to W$ in $\Aff(\cC)$, we must have
    \[
        g \otimes f = (g \otimes 1_Y) \circ (1_Z \otimes f).
    \]
    This proves the uniqueness statement in the proposition.  It is then a straightforward verification to check that the tensor product, extended to $\Aff(\cC)$ as above, endows $\Aff(\cC)$ with the structure of a strict monoidal category.
\end{proof}

Diagrammatically, the equations in \cref{stack} become
\begin{equation} \label{bridge}
    \begin{tikzpicture}[centerzero]
        \identify{-0.7}{-0.5}{0.7}{0.5};
        \draw (-0.3,-0.5) node[anchor=north] {\dotlabel{X}} -- (0.3,0.5) node[anchor=south] {\dotlabel{X}};
        \draw (0.3,-0.5) node[anchor=north] {\dotlabel{Y}} to[out=up,in=200] (0.7,0);
        \draw (-0.7,0) to[out=20,in=down] (-0.3,0.5) node[anchor=south] {\dotlabel{Y}};
    \end{tikzpicture}
    \ \otimes\
    \begin{tikzpicture}[centerzero]
        \identify{-0.3}{-0.5}{0.3}{0.5};
        \draw (0,-0.5) node[anchor=north] {\dotlabel{Z}} -- (0,0.5);
    \end{tikzpicture}
    \ =\
    \begin{tikzpicture}[centerzero]
        \draw (0.7,-0.5) node[anchor=north] {\dotlabel{Z}} -- (0.7,0.5);
        \draw (-0.3,-0.5) node[anchor=north] {\dotlabel{X}} -- (0.3,0.5) node[anchor=south] {\dotlabel{X}};
        \draw[wipe] (0.3,-0.5) to[out=up,in=200] (1,0.1);
        \draw (0.3,-0.5) node[anchor=north] {\dotlabel{Y}} to[out=up,in=200] (1,0.1);
        \draw (-0.7,0.1) to[out=20,in=down] (-0.3,0.5) node[anchor=south] {\dotlabel{Y}};
        \identify{-0.7}{-0.5}{1}{0.5};
    \end{tikzpicture}
    \ , \qquad
    \begin{tikzpicture}[centerzero]
        \identify{-0.3}{-0.5}{0.3}{0.5};
        \draw (0,-0.5) node[anchor=north] {\dotlabel{X}} -- (0,0.5);
    \end{tikzpicture}
    \ \otimes\
    \begin{tikzpicture}[centerzero]
        \identify{-0.7}{-0.5}{0.7}{0.5};
        \draw (-0.3,-0.5) node[anchor=north] {\dotlabel{Y}} -- (0.3,0.5) node[anchor=south] {\dotlabel{Y}};
        \draw (0.3,-0.5) node[anchor=north] {\dotlabel{Z}} to[out=up,in=200] (0.7,0);
        \draw (-0.7,0) to[out=20,in=down] (-0.3,0.5) node[anchor=south] {\dotlabel{Z}};
    \end{tikzpicture}
    \ =\
    \begin{tikzpicture}[centerzero]
        \draw (-0.3,-0.5) node[anchor=north] {\dotlabel{Y}} -- (0.3,0.5) node[anchor=south] {\dotlabel{Y}};
        \draw (0.3,-0.5) node[anchor=north] {\dotlabel{Z}} to[out=up,in=200] (0.7,-0.1);
        \draw (-1,-0.1) to[out=20,in=down] (-0.3,0.5) node[anchor=south] {\dotlabel{Z}};
        \draw[wipe] (-0.7,-0.5) -- (-0.7,0.5);
        \draw (-0.7,-0.5) node[anchor=north] {\dotlabel{X}} -- (-0.7,0.5);
        \identify{-1}{-0.5}{0.7}{0.5};
    \end{tikzpicture}
    \ .
\end{equation}
Note that the strands of the left-hand diagram in the tensor product pass over those of the right-hand diagram.

For the remainder of this section, we assume $\cC$ is braided, and we view $\Aff(\cC)$ as a strict monoidal category with the tensor product of \cref{afftensor}.  Note that, in general, $\Aff(\cC)$ is no longer braided.  (See, for example, \cref{jaguar+} below.)

We now introduce some diagrammatic shorthand that allows us to dispose of cylindrical diagrams.  We use a \emph{positive dot} on a strand to denote the morphism $\xi_X$ and a \emph{negative dot} to denote $\xi_X^{-1}$:
\[
    \begin{tikzpicture}[centerzero]
        \draw (0,-0.3) node[anchor=north] {\dotlabel{X}} -- (0,0.3);
        \posdot{0,0};
    \end{tikzpicture}
    := \xi_X =
    \begin{tikzpicture}[centerzero]
        \identify{-0.3}{-0.3}{0.3}{0.3};
        \draw (0,-0.3) node[anchor=north] {\dotlabel{X}} to[out=up,in=200] (0.3,0);
        \draw (-0.3,0) to[out=20,in=south] (0,0.3) node[anchor=south] {\dotlabel{X}};
    \end{tikzpicture}
    \ ,\qquad
    \begin{tikzpicture}[centerzero]
        \draw (0,-0.3) node[anchor=north] {\dotlabel{X}} -- (0,0.3);
        \negdot{0,0};
    \end{tikzpicture}
    := \xi_X^{-1} =
    \begin{tikzpicture}[centerzero]
        \identify{-0.3}{-0.3}{0.3}{0.3};
        \draw (0,-0.3) node[anchor=north] {\dotlabel{X}} to[out=up,in=-20] (-0.3,0);
        \draw (0.3,0) to[out=160,in=south] (0,0.3) node[anchor=south] {\dotlabel{X}};
    \end{tikzpicture}
    \ .
\]
It then follows from our definition of the tensor product on $\Aff(\cC)$ that
\begin{equation} \label{wrap}
    \begin{tikzpicture}[centerzero]
        \draw (0,-0.5) -- (0,0.5);
        \posdot{0,0};
        \draw (0.3,-0.5) -- (0.3,0.5);
        \draw (0.6,-0.5) -- (0.6,0.5);
        \draw (0.9,-0.5) -- (0.9,0.5);
        \draw (-0.3,-0.5) -- (-0.3,0.5);
        \draw (-0.6,-0.5) -- (-0.6,0.5);
        \draw (-0.9,-0.5) -- (-0.9,0.5);
        \draw (-1.2,-0.5) -- (-1.2,0.5);
    \end{tikzpicture}
    \ =\
    \begin{tikzpicture}[centerzero]
        \draw (0.3,-0.5) -- (0.3,0.5);
        \draw (0.6,-0.5) -- (0.6,0.5);
        \draw (0.9,-0.5) -- (0.9,0.5);
        \draw[wipe] (0,-0.5) to[out=up,in=200] (1.2,0);
        \draw (0,-0.5) to[out=up,in=200] (1.2,0);
        \draw (-1.5,0) to[out=20,in=down] (0,0.5);
        \draw[wipe] (-0.3,-0.5) -- (-0.3,0.5);
        \draw[wipe] (-0.6,-0.5) -- (-0.6,0.5);
        \draw[wipe] (-0.9,-0.5) -- (-0.9,0.5);
        \draw[wipe] (-1.2,-0.5) -- (-1.2,0.5);
        \draw (-0.3,-0.5) -- (-0.3,0.5);
        \draw (-0.6,-0.5) -- (-0.6,0.5);
        \draw (-0.9,-0.5) -- (-0.9,0.5);
        \draw (-1.2,-0.5) -- (-1.2,0.5);
        \identify{-1.5}{-0.5}{1.2}{0.5};
    \end{tikzpicture}
    \ ,\qquad
    \begin{tikzpicture}[centerzero]
        \draw (0,-0.5) -- (0,0.5);
        \negdot{0,0};
        \draw (0.3,-0.5) -- (0.3,0.5);
        \draw (0.6,-0.5) -- (0.6,0.5);
        \draw (0.9,-0.5) -- (0.9,0.5);
        \draw (-0.3,-0.5) -- (-0.3,0.5);
        \draw (-0.6,-0.5) -- (-0.6,0.5);
        \draw (-0.9,-0.5) -- (-0.9,0.5);
        \draw (-1.2,-0.5) -- (-1.2,0.5);
    \end{tikzpicture}
    \ =\
    \begin{tikzpicture}[centerzero]
        \draw (0.3,-0.5) -- (0.3,0.5);
        \draw (0.6,-0.5) -- (0.6,0.5);
        \draw (0.9,-0.5) -- (0.9,0.5);
        \draw (0,-0.5) to[out=up,in=-20] (-1.5,0);
        \draw[wipe] (1.2,0) to[out=160,in=down] (0,0.5);
        \draw (1.2,0) to[out=160,in=down] (0,0.5);
        \draw[wipe] (-0.3,-0.5) -- (-0.3,0.5);
        \draw[wipe] (-0.6,-0.5) -- (-0.6,0.5);
        \draw[wipe] (-0.9,-0.5) -- (-0.9,0.5);
        \draw[wipe] (-1.2,-0.5) -- (-1.2,0.5);
        \draw (-0.3,-0.5) -- (-0.3,0.5);
        \draw (-0.6,-0.5) -- (-0.6,0.5);
        \draw (-0.9,-0.5) -- (-0.9,0.5);
        \draw (-1.2,-0.5) -- (-1.2,0.5);
        \identify{-1.5}{-0.5}{1.2}{0.5};
    \end{tikzpicture}
    \ ,
\end{equation}
where there can be any number of strands to the left and right of the dots.

Using \cref{whip,leopard,wrap}, we have
\begin{gather} \label{jaguar+}
    \begin{tikzpicture}[centerzero]
        \draw (-0.3,-0.3) -- (0.3,0.3);
        \draw[wipe] (0.3,-0.3) -- (-0.3,0.3);
        \draw (0.3,-0.3) -- (-0.3,0.3);
        \posdot{-0.18,-0.18};
    \end{tikzpicture}
    =
    \begin{tikzpicture}[centerzero]
        \draw (0.3,-0.3) -- (-0.3,0.3);
        \draw[wipe] (-0.3,-0.3) -- (0.3,0.3);
        \draw (-0.3,-0.3) -- (0.3,0.3);
        \posdot{0.15,0.15};
    \end{tikzpicture}
    \ ,\quad
    \begin{tikzpicture}[centerzero]
        \draw (0.3,-0.3) -- (-0.3,0.3);
        \draw[wipe] (-0.3,-0.3) -- (0.3,0.3);
        \draw (-0.3,-0.3) -- (0.3,0.3);
        \posdot{0.18,-0.18};
    \end{tikzpicture}
    =
    \begin{tikzpicture}[centerzero]
        \draw (-0.3,-0.3) -- (0.3,0.3);
        \draw[wipe] (0.3,-0.3) -- (-0.3,0.3);
        \draw (0.3,-0.3) -- (-0.3,0.3);
        \posdot{-0.15,0.15};
    \end{tikzpicture}
    \ ,\
    \begin{tikzpicture}[centerzero]
        \draw (0,-0.4) node[anchor=north] {\dotlabel{X \otimes Y}} -- (0,0.4);
        \posdot{0,0};
    \end{tikzpicture}
    =
    \begin{tikzpicture}[centerzero]
        \draw (0.2,-0.4) node[anchor=north] {\dotlabel{Y}} -- (0.2,0.4);
        \draw (-0.2,-0.4) node[anchor=north] {\dotlabel{X}} -- (-0.2,0.4);
        \posdot{-0.2,0};
        \posdot{0.2,0};
    \end{tikzpicture}
    \ ,\quad
    \begin{tikzpicture}[centerzero]
        \draw (0,-0.4) -- (0,0.4);
        \posdot{0,0.2};
        \coupon{0,-0.15}{f};
    \end{tikzpicture}
    =
    \begin{tikzpicture}[centerzero]
        \draw (0,-0.4) -- (0,0.4);
        \posdot{0,-0.2};
        \coupon{0,0.15}{f};
    \end{tikzpicture}
    \ ,
    \\ \label{jaguar-}
    \begin{tikzpicture}[centerzero]
        \draw (0.3,-0.3) -- (-0.3,0.3);
        \draw[wipe] (-0.3,-0.3) -- (0.3,0.3);
        \draw (-0.3,-0.3) -- (0.3,0.3);
        \negdot{-0.15,-0.15};
    \end{tikzpicture}
    =
    \begin{tikzpicture}[centerzero]
        \draw (-0.3,-0.3) -- (0.3,0.3);
        \draw[wipe] (0.3,-0.3) -- (-0.3,0.3);
        \draw (0.3,-0.3) -- (-0.3,0.3);
        \negdot{0.18,0.18};
    \end{tikzpicture}
    \ ,\quad
    \begin{tikzpicture}[centerzero]
        \draw (-0.3,-0.3) -- (0.3,0.3);
        \draw[wipe] (0.3,-0.3) -- (-0.3,0.3);
        \draw (0.3,-0.3) -- (-0.3,0.3);
        \negdot{0.15,-0.15};
    \end{tikzpicture}
    =
    \begin{tikzpicture}[centerzero]
        \draw (0.3,-0.3) -- (-0.3,0.3);
        \draw[wipe] (-0.3,-0.3) -- (0.3,0.3);
        \draw (-0.3,-0.3) -- (0.3,0.3);
        \negdot{-0.18,0.18};
    \end{tikzpicture}
    \ ,\
    \begin{tikzpicture}[centerzero]
        \draw (0,-0.4) node[anchor=north] {\dotlabel{X \otimes Y}} -- (0,0.4);
        \negdot{0,0};
    \end{tikzpicture}
    =
    \begin{tikzpicture}[centerzero]
        \draw (0.2,-0.4) node[anchor=north] {\dotlabel{Y}} -- (0.2,0.4);
        \draw (-0.2,-0.4) node[anchor=north] {\dotlabel{X}} -- (-0.2,0.4);
        \negdot{-0.2,0};
        \negdot{0.2,0};
    \end{tikzpicture}
    \ ,\quad
    \begin{tikzpicture}[centerzero]
        \draw (0,-0.4) -- (0,0.4);
        \negdot{0,0.2};
        \coupon{0,-0.15}{f};
    \end{tikzpicture}
    =
    \begin{tikzpicture}[centerzero]
        \draw (0,-0.4) -- (0,0.4);
        \negdot{0,-0.2};
        \coupon{0,0.15}{f};
    \end{tikzpicture}
    \ ,\quad
    \begin{tikzpicture}[centerzero]
        \draw (0,-0.3) -- (0,0.3);
        \posdot{0,0.12};
        \negdot{0,-0.12};
    \end{tikzpicture}
    =
    \begin{tikzpicture}[centerzero]
        \draw (0,-0.3) -- (0,0.3);
        \negdot{0,0.12};
        \posdot{0,-0.12};
    \end{tikzpicture}
    =
    \begin{tikzpicture}[centerzero]
        \draw (0,-0.3) -- (0,0.3);
    \end{tikzpicture}
    \ ,
\end{gather}
for all $X,Y \in \Ob(\cC)$ and $f \in \Mor(\cC)$.  Note that the relations in \cref{jaguar-} follow immediately from those in \cref{jaguar+}, together with the fact that the negative dot is inverse to the positive dot.  The third and fourth relations in \cref{jaguar+} are equivalent to the assertion that the collection $(\xi_X)_{X \in \Ob(\cC)}$ is a monoidal natural automorphism of the identity functor on $\cC$; see \cref{Dehn}.

The following result shows, in particular, that the positive and negative dots, together with the morphisms of $\cC$, generate all morphisms of $\Aff(\cC)$ under composition and tensor product.

\begin{theo} \label{plain}
    The affinization $\Aff(\cC)$ of a braided strict monoidal category $\cC$ is isomorphic to the strict monoidal category obtained from $\cC$ by adjoining invertible morphisms
    \[
        \xi_X =
        \begin{tikzpicture}[centerzero]
            \draw (0,-0.3) node[anchor=north] {\dotlabel{X}} -- (0,0.3);
            \posdot{0,0};
        \end{tikzpicture}
        \colon X \to X
    \]
    for all objects $X \in \Ob(\cC)$, subject to the first, third, and fourth relations in \cref{jaguar+}.
\end{theo}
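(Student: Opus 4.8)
The plan is to construct mutually inverse strict monoidal functors between $\Aff(\cC)$ and the category appearing in the statement, which I will denote $\cC'$ (the strict monoidal category obtained from $\cC$ by adjoining invertible endomorphisms $\xi_X \colon X \to X$ subject to the first, third, and fourth relations in \cref{jaguar+}). By \cref{affdef}, $\Aff(\cC)$ is generated under composition by the morphisms of $\cC$ together with the coils $\xi_{X,Y}$, while $\cC'$ is generated, as a monoidal category, by the morphisms of $\cC$ together with the dots $\xi_X$. Hence it will suffice to produce strict monoidal functors $\Phi \colon \Aff(\cC) \to \cC'$ and $\Psi \colon \cC' \to \Aff(\cC)$, each restricting to the identity on $\cC$, and then to check that they are mutually inverse on the adjoined generators.

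The functor $\Psi$ is immediate: the three relations imposed in $\cC'$ are among those in \cref{jaguar+}, all of which hold in $\Aff(\cC)$ (see \cref{whip,leopard,wrap}), so the universal property of $\cC'$ provides a strict monoidal functor $\Psi$ that is the identity on $\cC$ and sends $\xi_X \in \Mor(\cC')$ to $\xi_X = \xi_{\one,X} \in \Mor(\Aff(\cC))$.

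To build $\Phi$ I would invoke the universal property of $\Aff(\cC)$ from \cref{affdef}: a functor $\Aff(\cC) \to \cC'$ that is the identity on $\cC$ is the same as a choice, for each pair $X,Y$, of an invertible morphism $X \otimes Y \to Y \otimes X$ in $\cC'$ playing the role of $\xi_{X,Y}$ and satisfying \cref{coilrel1,coilrel2}. I take
\[
  \Phi(\xi_{X,Y}) := (\xi_Y \otimes 1_X) \circ \beta_{Y,X}^{-1},
\]
which is invertible in $\cC'$ with inverse $\beta_{Y,X} \circ (\xi_Y^{-1} \otimes 1_X)$; this choice is in fact forced, since setting $X = \one$ in the first equation of \cref{stack} shows $\xi_{X,Y} = (\xi_Y \otimes 1_X) \circ \beta_{Y,X}^{-1}$ already holds in $\Aff(\cC)$. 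With this formula substituted, \cref{coilrel2} reduces, using the naturality of the braiding on morphisms of $\cC$, to the identity $\xi_{Y_2} \circ f = f \circ \xi_{Y_1}$ for $f \in \Hom_\cC(Y_1,Y_2)$, i.e.\ the fourth relation in \cref{jaguar+} (cf.\ \cref{Dehn}); while \cref{coilrel1}, after expanding the composite braidings on both sides by the hexagon axioms \cref{braidrel}, should follow from the third relation in \cref{jaguar+} (used to split $\xi_{Y \otimes Z} = \xi_Y \otimes \xi_Z$) together with the first relation in \cref{jaguar+} (used to move a dot past a braiding). This gives a functor $\Phi$; to see it is \emph{strict monoidal} I would check on the generating coils that $\Phi(\xi_{X,Y} \otimes 1_Z) = \Phi(\xi_{X,Y}) \otimes 1_Z$ and $\Phi(1_Z \otimes \xi_{X,Y}) = 1_Z \otimes \Phi(\xi_{X,Y})$, rewriting the left-hand sides via \cref{stack} and again using the hexagon axioms and the first relation in \cref{jaguar+}.

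Finally I would verify that $\Phi$ and $\Psi$ are mutually inverse. Both composites are strict monoidal endofunctors that are the identity on $\cC$, so by the generation statements it is enough to see that they fix the adjoined generators. Indeed $\Phi(\Psi(\xi_X)) = \Phi(\xi_{\one,X}) = (\xi_X \otimes 1_\one) \circ \beta_{X,\one}^{-1} = \xi_X$ (using $\beta_{X,\one} = 1_X$), so $\Phi \circ \Psi = \id_{\cC'}$; and $\Psi(\Phi(\xi_{X,Y})) = (\xi_Y \otimes 1_X) \circ \beta_{Y,X}^{-1} = \xi_{X,Y}$ in $\Aff(\cC)$ by the consequence of \cref{stack} noted above, so $\Psi \circ \Phi = \id_{\Aff(\cC)}$. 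Hence $\Phi$ is an isomorphism of strict monoidal categories. The step I expect to be the main obstacle is purely computational: checking \cref{coilrel1} and the strict monoidality of $\Phi$, which amount to somewhat lengthy bookkeeping with the hexagon axioms and the dot-sliding relation — probably cleanest if carried out diagrammatically on the cylinder, where these identities just say that coils and dots may be slid around.
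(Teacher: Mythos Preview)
Your proposal is correct and follows essentially the same approach as the paper: build mutually inverse functors, one from the universal property of $\cC'$ (your $\Psi$, the paper's $F$) and one from the universal property of $\Aff(\cC)$ (your $\Phi$, the paper's $G$), verifying the latter respects \cref{coilrel1,coilrel2}. The only cosmetic differences are that the paper sends $\xi_{X,Y}$ to $\beta_{X,Y}\circ(1_X\otimes\xi_Y)$ rather than your $(\xi_Y\otimes 1_X)\circ\beta_{Y,X}^{-1}$ (these agree in $\cC'$ by the first relation in \cref{jaguar+}), and the paper omits your separate monoidality check for $\Phi$, which is redundant once $\Psi$ is monoidal and $\Phi$ is its inverse.
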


\begin{proof}
    First note that the second relation in \cref{jaguar+} follows from the first relation, after composing on the top with $\poscross$ and on the bottom with $\negcross$.  Let $\Aff'(\cC)$ denote the category described in the statement of the proposition and let us temporarily denote the new generators of $\Aff'(\cC)$ by $\xi'_X$ to avoid confusion.  By \cref{jaguar+}, we have a functor $F \colon \Aff'(\cC) \to \Aff(\cC)$ equal to the identity on $\cC$ and sending $\xi'_X$ to $\xi_X$.  We then define a functor $G \colon \Aff(\cC) \to \Aff'(\cC)$ equal to the identity on $\cC$ and sending $\xi_{X,Y}$ to $\beta_{X,Y} \circ (1_X \otimes \xi_Y')$.  Assuming $G$ is well defined, it is straightforward to verify that $F$ and $G$ are mutually inverse using the computation
    \begin{equation} \label{bootstrap}
        \xi_{X,Y}
        =
        \begin{tikzpicture}[centerzero]
            \identify{-0.7}{-0.5}{0.7}{0.5};
            \draw (-0.3,-0.5) node[anchor=north] {\dotlabel{X}} -- (0.3,0.5);
            \draw (0.3,-0.5) node[anchor=north] {\dotlabel{Y}} to[out=up,in=200] (0.7,0);
            \draw (-0.7,0) to[out=20,in=down] (-0.3,0.5) node[anchor=south] {\dotlabel{Y}};
        \end{tikzpicture}
        =
        \begin{tikzpicture}[centerzero={0,0.2}]
            \draw (0.3,-0.5) node[anchor=north] {\dotlabel{Y}} to[out=up,in=200] (0.7,-0.1);
            \draw (-0.7,-0.1) to[out=20,in=down] (0,0.4) \braidup (-0.3,0.9) node[anchor=south] {\dotlabel{Y}};
            \draw[wipe] (-0.3,-0.5) node[anchor=north] {\dotlabel{X}} -- (-0.3,0.4) \braidup (0.3,0.9);
            \draw (-0.3,-0.5) node[anchor=north] {\dotlabel{X}} -- (-0.3,0.4) \braidup (0.3,0.9);
            \identify{-0.7}{-0.5}{0.7}{0.9};
        \end{tikzpicture}
        = \beta_{X,Y} \circ (1_X \otimes \xi_Y).
    \end{equation}
    To prove that $G$ is well defined, we need to show it preserves the defining relations \cref{coilrel1,coilrel2}.  We first compute that $G$ sends the right-hand side of \cref{coilrel1} to
    \begin{align*}
        \begin{tikzpicture}[centerzero={0,-0.05}]
            \draw (0,-1.1) node[anchor=north] {\dotlabel{Y}} -- (0,-0.6);
            \draw (0.4,-1.1) node[anchor=north] {\dotlabel{Z}} -- (0.4,-0.6) \braidup (-0.4,0);
            \draw[wipe] (0,-0.6) \braidup (0.4,0) \braidup (-0.4,0.6);
            \draw (0,-0.6) \braidup (0.4,0) \braidup (-0.4,0.6);
            \draw[wipe] (-0.4,0) \braidup (0,0.6);
            \draw (-0.4,0) \braidup (0,0.6) -- (0,1);
            \draw (-0.4,0.6) -- (-0.4,1);
            \draw[wipe] (-0.4,-1.1) \braidup (0.4,1);
            \draw (-0.4,-1.1) node[anchor=north] {\dotlabel{X}} \braidup (0.4,1);
            \posdot{0.4,-0.8};
            \posdot{0.4,0};
        \end{tikzpicture}
        =
        \begin{tikzpicture}[centerzero={0,1.05}]
            \draw (0.4,0) node[anchor=north] {\dotlabel{Z}} -- (0.4,0.3) \braidup (0,0.9);
            \draw[wipe] (0,0) -- (0,0.3) \braidup (0.4,0.9);
            \draw (0,0) node[anchor=north] {\dotlabel{Y}} -- (0,0.3) \braidup (0.4,0.9) \braidup (-0.4,2.1);
            \draw[wipe] (0,0.9) \braidup (0.4,1.5);
            \draw (0,0.9) \braidup (0.4,1.5) \braidup (0,2.1);
            \draw[wipe] (-0.4,1.2) \braidup (0.4,2.1);
            \draw (-0.4,0) node[anchor=north] {\dotlabel{X}} -- (-0.4,1.2) \braidup (0.4,2.1);
            \posdot{0.4,0.3};
            \posdot{0.4,0.9};
        \end{tikzpicture}
        \overset{\cref{jaguar+}}{=}
        \begin{tikzpicture}[centerzero={0,1.05}]
            \draw (0,0) node[anchor=north] {\dotlabel{Y}} -- (0,0.3) \braidup (0.4,0.9) \braidup (-0.4,2.1);
            \draw[wipe] (0.4,0) -- (0.4,0.3) \braidup (0,0.9) \braidup (0.4,1.5);
            \draw (0.4,0) node[anchor=north] {\dotlabel{Z}} -- (0.4,0.3) \braidup (0,0.9) \braidup (0.4,1.5) \braidup (0,2.1);
            \draw[wipe] (-0.4,1.2) \braidup (0.4,2.1);
            \draw (-0.4,0) node[anchor=north] {\dotlabel{X}} -- (-0.4,1.2) \braidup (0.4,2.1);
            \posdot{0.4,0.3};
            \posdot{0,0.3};
        \end{tikzpicture}
        =
        \begin{tikzpicture}[centerzero={0,1.05}]
            \draw (0,0) node[anchor=north] {\dotlabel{Y}} -- (0,1) \braidup (-0.4,2.1);
            \draw (0.4,0) node[anchor=north] {\dotlabel{Z}} -- (0.4,1) \braidup (0,2.1);
            \draw[wipe] (-0.4,1) \braidup (0.4,2.1);
            \draw (-0.4,0) node[anchor=north] {\dotlabel{X}} -- (-0.4,1) \braidup (0.4,2.1);
            \posdot{0.4,0.5};
            \posdot{0,0.5};
        \end{tikzpicture}
        \overset{\cref{jaguar+}}{=}
        \begin{tikzpicture}[centerzero]
            \draw (0.3,-0.5) node[anchor=north] {\dotlabel{Y \otimes Z}} -- (-0.3,0.5);
            \draw[wipe] (-0.3,-0.5) -- (0.3,0.5);
            \draw (-0.3,-0.5) node[anchor=north] {\dotlabel{X}} -- (0.3,0.5);
            \posdot{0.17,-0.283};
        \end{tikzpicture}
        ,
    \end{align*}
    which is the image under $G$ of the left-hand side of \cref{coilrel1}.  Finally, for morphisms $f,g$ in $\cC$, $G$ sends the left-hand side of \cref{coilrel2} to
    \[
        \begin{tikzpicture}[centerzero]
            \draw (0.3,-0.6) -- (0.3,0) to[out=up,in=-45] (-0.3,0.5);
            \draw[wipe] (-0.3,-0.5) -- (-0.3,0) to[out=up,in=225] (0.3,0.5);
            \draw (-0.3,-0.6) -- (-0.3,0) to[out=up,in=225] (0.3,0.5);
            \posdot{0.3,0};
            \coupon{0.3,-0.32}{f};
            \coupon{-0.3,-0.32}{g};
        \end{tikzpicture}
        \ \overset{\cref{jaguar+}}{\underset{\cref{lynx}}{=}}\
        \begin{tikzpicture}[centerzero]
            \draw (0.3,-0.5) -- (0.3,-0.25) \braidup (-0.3,0.25) -- (-0.3,0.5);
            \draw[wipe] (-0.3,-0.5) -- (-0.3,-0.25) \braidup (0.3,0.25) -- (0.3,0.5);
            \draw (-0.3,-0.5) -- (-0.3,-0.25) \braidup (0.3,0.25) -- (0.3,0.5);
            \posdot{0.3,-0.25};
            \coupon{-0.3,0.25}{f};
            \coupon{0.3,0.25}{g};
        \end{tikzpicture}
        \ ,
    \]
    which is the image under $G$ of the right-hand side of \cref{coilrel2}.
\end{proof}

For a braided strict monoidal category $\cC$, we now have two diagrammatic ways of viewing the affinization: the ``cylindrical viewpoint'', where we picture string diagrams on the cylinder, and the ``dot viewpoint'', where the coils are depicted using the dot generators.  Both points of view can be useful, depending on the particular application.  One advantage of the dot viewpoint is that we no longer need to work with diagrams on the cylinder; so we can dispense with the dashed vertical lines at the sides of diagrams.

We conclude this section by examining the functoriality of the affinization procedure.  Recall that a \emph{strict} monoidal functor $F \colon \cC \to \cD$  is required to satisfy $F(X \otimes Y) = F(X) \otimes F(Y)$ for all $X,Y \in \Ob(\cC)$.  Note that a strong monoidal functor between strict monoidal categories is not necessarily a \emph{strict} monoidal functor.

\begin{prop}
    Any strict monoidal functor $F \colon \cC \to \cD$ between strict monoidal categories induces a functor $\Aff(F) \colon \Aff(\cC) \to \Aff(\cD)$ by defining
    \begin{gather*}
        \Aff(F)(X) = F(X),\ X \in \Ob(\cC),\qquad
         \Aff(F)(f) = F(f),\ f \in \Mor(\cC),
         \\
        \Aff(F)(\xi_{X,Y}) := \xi_{F(X),F(Y)},\ X,Y \in \Ob(\cC).
    \end{gather*}
    If $F \colon \cC \to \cD$ and $G \colon \cD \to \cE$ are strict monoidal functors between strict monoidal categories, we have $\Aff(G \circ F) = \Aff(G) \circ \Aff(F)$.
\end{prop}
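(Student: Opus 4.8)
The plan is to exploit the fact that, by construction, $\Aff(\cC)$ is the category obtained from $\cC$ by freely adjoining the invertible morphisms $\xi_{X,Y}$ and then imposing the relations \cref{coilrel1,coilrel2}. This presentation has a universal property: to define a functor out of $\Aff(\cC)$ it suffices to specify a functor on $\cC$ together with invertible images for the coils, subject only to the requirement that these images satisfy the analogues of \cref{coilrel1,coilrel2}. So for the first assertion I would only need to check that the morphisms $\xi_{F(X),F(Y)} \colon F(X)\otimes F(Y) \to F(Y)\otimes F(X)$ of $\Aff(\cD)$ satisfy those relations. The key preliminary observation is that, because $F$ is \emph{strict} monoidal, $F(X\otimes Y) = F(X)\otimes F(Y)$, so $\xi_{F(X),F(Y)}$ has exactly the right source and target to serve as $\Aff(F)(\xi_{X,Y})$; this is the one place where strictness of $F$, rather than mere strong monoidality, is essential. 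Invertibility of $\xi_{F(X),F(Y)}$ is automatic from \cref{affdef}.

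Next I would verify the two relations. For \cref{coilrel1}, strictness gives $F(Y\otimes Z) = F(Y)\otimes F(Z)$, $F(X\otimes Y) = F(X)\otimes F(Y)$, and $F(Z\otimes X) = F(Z)\otimes F(X)$, so the identity to be checked,
\[
    \xi_{F(X),\, F(Y)\otimes F(Z)} = \xi_{F(Z)\otimes F(X),\, F(Y)} \circ \xi_{F(X)\otimes F(Y),\, F(Z)},
\]
is literally an instance of \cref{coilrel1} in $\Aff(\cD)$. Likewise, for $f\in\Hom_\cC(Y_1,Y_2)$ and $g\in\Hom_\cC(X_1,X_2)$, strictness gives $F(g\otimes f) = F(g)\otimes F(f)$ and $F(f\otimes g) = F(f)\otimes F(g)$, so the image of \cref{coilrel2} is precisely \cref{coilrel2} in $\Aff(\cD)$ applied to the objects $F(X_i),F(Y_i)$ and morphisms $F(g),F(f)$. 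This establishes that $\Aff(F)$ is a well-defined functor. No monoidal compatibility needs to be checked: the proposition does not claim $\Aff(F)$ is monoidal, and indeed $\Aff(\cC)$ need not be monoidal when $\cC$ is not braided.

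For the composition statement, both $\Aff(G\circ F)$ and $\Aff(G)\circ\Aff(F)$ are functors $\Aff(\cC)\to\Aff(\cE)$, so by the same universal property it is enough to check they agree on $\cC$ and on the coils. On objects and morphisms of $\cC$ both send $X\mapsto G(F(X))$ and $f\mapsto G(F(f))$. On a coil, using $F(X\otimes Y) = F(X)\otimes F(Y)$ so that all sources and targets match, I would compute
\[
    \Aff(G\circ F)(\xi_{X,Y}) = \xi_{G(F(X)),\, G(F(Y))} = \Aff(G)\bigl(\xi_{F(X),F(Y)}\bigr) = \bigl(\Aff(G)\circ\Aff(F)\bigr)(\xi_{X,Y}).
\]
Hence the two functors coincide.

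The content here is essentially bookkeeping; the only thing that genuinely requires care is the appeal to the generators-and-relations presentation of $\Aff(\cC)$, so that constructing a functor reduces to choosing images of the generators that respect the defining relations, and keeping track of where strictness of $F$ makes the source and target objects match on the nose. I do not anticipate any real obstacle beyond articulating these points precisely.
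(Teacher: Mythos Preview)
Your proposal is correct and follows essentially the same approach as the paper: verify well-definedness by checking that the images of the coils satisfy \cref{coilrel1,coilrel2}, which is immediate from strictness of $F$, and then observe that the composition statement follows directly from the definitions on generators. The paper's proof is just a terser version of what you wrote.
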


\begin{proof}
    To verify that $\Aff(F)$ is a well-defined functor, it suffices to verify that $\Aff(F)$ preserves the relations \cref{coilrel1,coilrel2}, which follows immediately from the fact that $F$ is a strict monoidal functor.
    \details{
        For example, we have
        \begin{multline*}
            \Aff(X)(\xi_{X,Y \otimes Z})
            = \xi_{F(X),F(Y) \otimes F(Z)}
            \\
            = \xi_{F(Z) \otimes F(X), F(Y)} \circ \xi_{F(X) \otimes F(Y), F(Z)}
            = \Aff(F)(\xi_{Z \otimes X, Y}) \circ \Aff(F)(\xi_{X \otimes Y, Z})
            \\
            = \Aff(F)(\xi_{Z \otimes X, Y} \circ \xi_{X \otimes Y,Z)}
        \end{multline*}
        and
    }
    The final statement of the proposition also follows immediately from the definitions.
\end{proof}

The proof of the following result is a straightforward exercise.

\begin{prop}
    If $F \colon \cC \to \cD$ is a braided strict monoidal functor between braided strict monoidal categories, then $\Aff(F)$ is a strict monoidal functor.
\end{prop}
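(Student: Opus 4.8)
The plan is to build on the preceding proposition, which already supplies a well-defined functor $\Aff(F) \colon \Aff(\cC) \to \Aff(\cD)$; it remains only to check that this functor is \emph{strict} monoidal. On objects there is nothing to prove: $\Aff(\cC)$ and $\cC$ have the same objects and the same tensor product of objects, so $\Aff(F)(X \otimes Y) = F(X \otimes Y) = F(X) \otimes F(Y) = \Aff(F)(X) \otimes \Aff(F)(Y)$ since $F$ is strict monoidal. The substance is therefore the identity $\Aff(F)(f \otimes g) = \Aff(F)(f) \otimes \Aff(F)(g)$ for all morphisms $f,g$ of $\Aff(\cC)$.

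First I would reduce to the whiskering identities $\Aff(F)(f \otimes 1_W) = \Aff(F)(f) \otimes 1_{F(W)}$ and $\Aff(F)(1_W \otimes f) = 1_{F(W)} \otimes \Aff(F)(f)$ for $f$ a morphism of $\Aff(\cC)$ and $W \in \Ob(\cC)$. Granting these, and using that $\Aff(F)$ is a functor (hence preserves composition), the general case follows from the factorization $g \otimes f = (g \otimes 1_Y) \circ (1_Z \otimes f)$ used in the proof of \cref{afftensor}. The whiskering identities themselves I would prove by induction on the length of a factorization of $f$ into morphisms of $\cC$ and coils $\xi_{X,Y}^{\pm 1}$, which generate all morphisms of $\Aff(\cC)$ under composition; the inductive step is immediate from \cref{deer} together with functoriality of $\Aff(F)$, so everything comes down to two base cases.

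The base cases are short. If $f$ is a morphism of $\cC$, then $f \otimes 1_W$ is computed inside $\cC$ and $\Aff(F)(f \otimes 1_W) = F(f \otimes 1_W) = F(f) \otimes 1_{F(W)}$ because $F$ is strict monoidal. If $f = \xi_{X,Y}$ is a coil, I would apply the first equation of \cref{stack}, namely $\xi_{X,Y} \otimes 1_W = \xi_{X \otimes W, Y} \circ (1_X \otimes \beta_{Y,W})$, push $\Aff(F)$ through the composition, use $\Aff(F)(\xi_{X \otimes W, Y}) = \xi_{F(X) \otimes F(W), F(Y)}$, and—this is the only point where the hypothesis that $F$ is \emph{braided} is used—substitute $F(\beta_{Y,W}) = \beta_{F(Y),F(W)}$; reading \cref{stack} backwards then recognizes the result as $\xi_{F(X),F(Y)} \otimes 1_{F(W)} = \Aff(F)(\xi_{X,Y}) \otimes 1_{F(W)}$. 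The case of $\xi_{X,Y}^{-1}$ follows by taking inverses, and the identities with $1_W$ on the left are handled symmetrically via the second equation in \cref{stack}.

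I do not expect a genuine obstacle: the argument is bookkeeping. The one point requiring care is the legitimacy of the reduction to generators—i.e. that $- \otimes 1_W$ is genuinely determined on all of $\Aff(\cC)$ by its values on $\cC$ and on coils through \cref{deer,stack}—but this is precisely the uniqueness half of \cref{afftensor}, so invoking that proposition keeps the reduction honest.
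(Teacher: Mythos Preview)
Your proposal is correct and is exactly the sort of argument the paper has in mind: the paper itself gives no proof, declaring the result ``a straightforward exercise,'' and your reduction via \cref{afftensor} to checking \cref{stack} on coils---where the braided hypothesis on $F$ enters through $F(\beta_{Y,W}) = \beta_{F(Y),F(W)}$---is the natural way to carry out that exercise.
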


\begin{rem}
    We expect that the concept of the affinization of a monoidal category can be extended to the setting of monoidal categories which are not necessarily strict.  We have chosen to focus on strict monoidal categories since it significantly simplifies the exposition and all of the examples and applications that we have in mind are strict monoidal categories.  Furthermore, by the Mac Lane coherence theorem, every monoidal category is monoidally equivalent to a strict one.
\end{rem}

\section{Actions\label{sec:action}}

Recall that, for any category $\cM$, the category $\cEnd(\cM)$ of endofunctors and natural transformations is a strict monoidal category.  An \emph{action} of a strict monoidal category $\cC$ on a monoidal category $\cM$ is a monoidal functor $A \colon \cC \to \cEnd(\cM)$.  We adopt the notation $X \cdot M = A(X)(M)$ for $X \in \Ob(\cC)$ and $M \in \Ob(\cM)$.

If $F \colon \cC \to \cM$ is a monoidal functor, then $\cC$ acts on $\cM$ via the action
\begin{equation} \label{viper}
    X \cdot M = F(X) \otimes M,\quad
    f \cdot g = F(f) \otimes g,
\end{equation}
for $X \in \Ob(\cC)$, $M \in \Ob(\cM$), $f \in \Mor(\cC)$, $g \in \Mor(\cM)$.  The goal of this section is to extend this action to the affinization $\Aff(\cC)$.  Intuitively, this action corresponds to placing string diagrams from $\cM$ inside the cylinder corresponding to diagrams from $\Aff(\cC)$, then using the functor $F$ to interpret this as a diagram in $\cM$.  In order for this action to be well defined, we need to make the additional assumption that $\cC$ is a \emph{balanced strict monoidal category}.

Recall that a strict monoidal category $\cC$ is \emph{balanced} if it is braided and has a \emph{twist}, which is a natural transformation $\theta \colon \id_\cC \to \id_\cC$ (recall that $\id_\cC$ is the identity functor on $\cC$), whose components we will denote
\[
    \theta_X =
    \begin{tikzpicture}[centerzero]
         \draw (0,-0.3) node[anchor=north] {\dotlabel{X}} -- (0,0.3);
         \coupon{0,0}{\theta};
    \end{tikzpicture}
    \colon X \to X,\quad X \in \Ob(\cC),
\]
satisfying $\theta_\one = 1_\one$ and
\begin{equation} \label{swirl}
    \begin{tikzpicture}[centerzero]
        \draw (0,-0.4) node[anchor=north] {\dotlabel{X \otimes Y}} -- (0,0.4);
        \coupon{0,0}{\theta};
    \end{tikzpicture}
    =
    \begin{tikzpicture}[centerzero]
        \draw (0.2,-0.4) node[anchor=north] {\dotlabel{Y}} to[out=135,in=down] (-0.2,0);
        \draw[wipe] (-0.2,-0.4) to[out=45,in=down] (0.2,0);
        \draw (-0.2,-0.4) node[anchor=north] {\dotlabel{X}} to[out=45,in=down] (0.2,0) to[out=up,in=-45] (-0.2,0.4);
        \draw[wipe] (-0.2,0) to[out=up,in=225] (0.2,0.4);
        \draw (-0.2,0) to[out=up,in=225] (0.2,0.4);
        \coupon{-0.2,0}{\theta};
        \coupon{0.2,0}{\theta};
    \end{tikzpicture}
    \ ,\quad X,Y \in \Ob(\cC).
\end{equation}
The fact that the family is natural means that the twists commute with morphisms:
\begin{equation} \label{spin}
    \begin{tikzpicture}[centerzero]
        \draw (0,-0.5) node[anchor=north] {\dotlabel{X}} -- (0,0.5) node[anchor=south] {\dotlabel{Y}};
        \coupon{0,-0.2}{f};
        \coupon{0,0.2}{\theta};
    \end{tikzpicture}
    =
    \begin{tikzpicture}[centerzero]
        \draw (0,-0.5) node[anchor=north] {\dotlabel{X}} -- (0,0.5) node[anchor=south] {\dotlabel{Y}};
        \coupon{0,0.2}{f};
        \coupon{0,-0.2}{\theta};
    \end{tikzpicture}
    \ ,\quad f \colon X \to Y \text{ in } \cC.
\end{equation}

\begin{rem}
    In most of the examples to be considered in the current paper, the objects of the category $\cC$ will be freely generated by some set of generating objects.  In this case, any twist on $\cC$ is uniquely determined by the twists of the generating objects and \cref{swirl}.   Furthermore, the category $\cC$ will often be a braided strict pivotal category, in which case we have a twist given by a ``curl''; see \cref{looptwist}.
\end{rem}

The following result extends the action of \cref{viper} to the affinization $\Aff(\cC)$.

\begin{theo} \label{salamander}
    Suppose $\cC$ is a braided strict monoidal category, $\cM$ is a balanced strict monoidal category, and $F \colon \cC \to \cM$ is a braided monoidal functor.  Then there is an action of $\Aff(\cC)$ on $\cM$ uniquely determined by
    \begin{gather} \label{gecko}
        X \cdot M := F(X) \otimes M, \quad
        f \cdot g := F(f) \otimes g, \\
        \xi_X \cdot g := \beta_{N,F(X)} \circ (g \otimes \theta_{F(X)}) \circ \beta_{F(X),M},\quad
        \xi_X^{-1} \cdot g := \beta_{F(X),N}^{-1} \circ (g \otimes \theta_{F(X)}^{-1}) \circ \beta_{M,F(X)}^{-1}, \label{snake}
    \end{gather}
    for all $X \in \Ob(\cC) = \Ob(\Aff(\cC))$, $f \in \Mor(\cC)$, $M,N \in \Ob(\cM)$, $g \in \Hom_\cM(M,N)$.
\end{theo}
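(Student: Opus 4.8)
The plan is to construct the action as a strong monoidal functor $A\colon\Aff(\cC)\to\cEnd(\cM)$ extending the action of \cref{viper}, using the presentation of the affinization furnished by \cref{plain}. By that result, $\Aff(\cC)$ is obtained from $\cC$ by adjoining the invertible endomorphisms $\xi_X$, $X\in\Ob(\cC)$, subject only to the first, third, and fourth relations in \cref{jaguar+}. Thus it suffices to take $A(X)=F(X)\otimes-$ and $A(f)=F(f)\otimes-$ on $\cC$, to prescribe the natural transformations $A(\xi_X)$ via \cref{snake}, and then to check that each $A(\xi_X)$ is invertible and that $A$ respects those three relations; uniqueness is immediate afterwards, since $\cC$ and the $\xi_X$ generate $\Aff(\cC)$ under composition and tensor product, so \cref{gecko,snake} determine $A$ completely. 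For notational simplicity I will treat $F$ as a strict monoidal functor; in the general case one simply inserts the coherence isomorphisms of $F$ at the appropriate places.

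First I would define $A(\xi_X)\colon A(X)\Rightarrow A(X)$ to be the natural transformation with component at $M\in\Ob(\cM)$ given by
\[
    (\xi_X)_M:=\beta_{M,F(X)}\circ(1_M\otimes\theta_{F(X)})\circ\beta_{F(X),M}\colon F(X)\otimes M\to F(X)\otimes M,
\]
and $A(\xi_X^{-1})$ analogously, with $\theta_{F(X)}$ and the braidings replaced by their inverses. Checking that this family is natural in $M$ is a short chase: the naturality square at $g\colon M\to N$ follows from naturality of the braiding, and both composites around it equal $\beta_{N,F(X)}\circ(g\otimes\theta_{F(X)})\circ\beta_{F(X),M}$, which is exactly the value $\xi_X\cdot g$ demanded by \cref{snake}. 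Invertibility of $A(\xi_X)$, with inverse $A(\xi_X^{-1})$, is then immediate from invertibility of $\theta_{F(X)}$ and of the braidings.

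The substance of the proof is verifying that $A$ respects the three relations in \cref{jaguar+}. The fourth relation (naturality of the coil with respect to morphisms of $\cC$) reduces, at each $M$, to $(\xi_Y)_M\circ(F(f)\otimes 1_M)=(F(f)\otimes 1_M)\circ(\xi_X)_M$ for $f\colon X\to Y$, which follows from naturality of $\beta$ together with naturality of the twist $\theta$ in $\cM$. For the first relation, since $F$ is braided the crossing $\beta_{X,Y}$ is sent by $A$ to $\beta_{F(X),F(Y)}\otimes-$, and the relation becomes an identity of braidings and twists in $\cM$ that follows from the hexagon axioms \cref{braidrel} alone. I expect the third relation, $\xi_{X\otimes Y}=\xi_X\otimes\xi_Y$, to be the main obstacle: at each $M$ it reads $(\xi_{X\otimes Y})_M=(\xi_X)_{F(Y)\otimes M}\circ(1_{F(X)}\otimes(\xi_Y)_M)$, and establishing it requires expanding $\theta_{F(X)\otimes F(Y)}$ using the twist--tensor axiom \cref{swirl} and then repeatedly applying both hexagons to match the two sides. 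This is precisely where the hypothesis that $\cM$ be \emph{balanced}, rather than merely braided, is used: the twist is the ingredient that reconciles $\theta_{F(X)\otimes F(Y)}$ with $\theta_{F(X)}\otimes\theta_{F(Y)}$. Once all three relations are checked, $A$ is a well-defined monoidal functor, hence an action of $\Aff(\cC)$ on $\cM$, and it is the unique one satisfying \cref{gecko,snake}.
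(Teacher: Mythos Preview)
Your proposal is correct and follows essentially the same approach as the paper: use the presentation of $\Aff(\cC)$ from \cref{plain}, reduce to checking the first, third, and fourth relations in \cref{jaguar+} on identity morphisms, and verify these using naturality of the braiding, the hexagon axioms, and (for the third relation) the twist axiom \cref{swirl}. The paper carries out the first and third relations via explicit string-diagram computations rather than the verbal sketch you give, but the logical structure and the ingredients used are identical.
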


In terms of string diagrams, \cref{snake} becomes
\[
    \xi_X \cdot g =
    \begin{tikzpicture}[centerzero]
        \draw (0,-0.6) node[anchor=north] {\dotlabel{M}} -- (0,0);
        \draw[wipe] (-0.5,-0.6) to[out=45,in=down] (0.5,0);
        \draw (-0.5,-0.6) node[anchor=north east] {\dotlabel{F(X)}} to[out=45,in=down] (0.5,0);
        \draw (0.5,0) to[out=up,in=-45] (-0.5,0.6);
        \draw[wipe] (0,0) -- (0,0.6);
        \draw (0,0) -- (0,0.6) node[anchor=south] {\dotlabel{N}};
        \coupon{0,0}{g};
        \coupon{0.5,0}{\theta};
    \end{tikzpicture}
    \ ,\qquad
    \xi_X^{-1} \cdot g =
    \begin{tikzpicture}[centerzero]
        \draw (-0.5,-0.6) node[anchor=north east] {\dotlabel{F(X)}} to[out=45,in=down] (0.5,0);
        \draw[wipe] (0,-0.6) -- (0,0);
        \draw (0,-0.6) node[anchor=north] {\dotlabel{M}} -- (0,0);
        \draw (0,0) -- (0,0.6) node[anchor=south] {\dotlabel{N}};
        \draw[wipe] (0.5,0) to[out=up,in=-45] (-0.5,0.6);
        \draw (0.5,0) to[out=up,in=-45] (-0.5,0.6);
        \coupon{0,0}{g};
        \coupon[0.27]{0.5,0}{\theta^{-1}};
    \end{tikzpicture}
    \ .
\]

\begin{proof}
    Let $W,X,Y,Z$ be objects, and let $f \colon Y \to Z$, $f' \colon Z \to W$, $g$, and $g'$ be morphisms in $\cC$.  It is straightforward to verify that
    \begin{gather*}
        (\xi_X \cdot g) \circ (\xi_X^{-1} \cdot g')
        = 1_X \cdot (g \circ g')
        = (\xi_X^{-1} \cdot g) \circ (\xi_X \cdot g')
        \quad \text{and}
        \\
        (f \cdot g) \circ (f' \cdot g')
        = (f \circ f') \cdot (g \circ g')
    \end{gather*}
    for all objects $X$ in $\cC$, morphisms $f,f'$ in $\cC$, and morphisms $g,g'$ in $\cM$ such that the above compositions are defined.

    It remains to show that \cref{gecko,snake} respect the first, third, and fourth relations in \cref{jaguar+}.  Since $f \cdot g = (f \cdot 1) \circ (1 \cdot g)$ for $f \in \Mor(\Aff(\cC))$ and $g \in \Mor(\cM)$, it suffices to consider the action on identity morphisms.  For the first relation in \cref{jaguar+}, we compute
    \[
        \begin{tikzpicture}[centerzero]
            \draw (-0.3,-0.3) -- (0.3,0.3);
            \draw[wipe] (0.3,-0.3) -- (-0.3,0.3);
            \draw (0.3,-0.3) -- (-0.3,0.3);
            \posdot{-0.18,-0.18};
        \end{tikzpicture}
        \cdot 1_M
        =
        \begin{tikzpicture}[anchorbase]
            \draw (0.5,0) -- (0.5,0.6);
            \draw (1,0) node[anchor=north] {\dotlabel{M}} -- (1,0.6);
            \draw[wipe] (0,0) to[out=45,in=down] (1.5,0.6);
            \draw (0,0) to[out=45,in=down] (1.5,0.6) to[out=up,in=down] (0,1.4) \braidup (0.5,1.8);
            \draw[wipe] (0.5,0.6) -- (0.5,1.2) \braidup (0,1.8);
            \draw (0.5,0.6) -- (0.5,1.2) \braidup (0,1.8);
            \draw[wipe] (1,0.6) -- (1,1.8);
            \draw (1,0.6) -- (1,1.8);
            \coupon{1.5,0.6}{\theta};
        \end{tikzpicture}
        =
        \begin{tikzpicture}[anchorbase]
            \draw (0.5,0) -- (0.5,0.6);
            \draw (1,0) node[anchor=north] {\dotlabel{M}} -- (1,0.6);
            \draw[wipe] (0,0) to[out=45,in=down] (1.5,0.6);
            \draw (0,0) to[out=45,in=down] (1.5,0.6) to[out=up,in=down] (0.5,1.5) -- (0.5,1.8);
            \draw[wipe] (0.5,0.6) \braidup (0,1.2) -- (0,1.8);
            \draw (0.5,0.6) \braidup (0,1.2) -- (0,1.8);
            \draw[wipe] (1,0.6) -- (1,1.8);
            \draw (1,0.6) -- (1,1.8);
            \coupon{1.5,0.6}{\theta};
        \end{tikzpicture}
        =
        \begin{tikzpicture}[centerzero]
            \draw (0.3,-0.3) -- (-0.3,0.3);
            \draw[wipe] (-0.3,-0.3) -- (0.3,0.3);
            \draw (-0.3,-0.3) -- (0.3,0.3);
            \posdot{0.15,0.15};
        \end{tikzpicture}
        \cdot 1_M.
    \]
    For the third relation, setting $X' = F(X)$ and $Y' = F(Y)$, we have
    \[
        \left(
            \begin{tikzpicture}[anchorbase]
                \draw (0.2,-0.4) node[anchor=north] {\dotlabel{Y}} -- (0.2,0.4);
                \draw (-0.2,-0.4) node[anchor=north] {\dotlabel{X}} -- (-0.2,0.4);
                \posdot{-0.2,0};
                \posdot{0.2,0};
            \end{tikzpicture}
        \right)
        \cdot 1_Z
        =
        \begin{tikzpicture}[anchorbase]
            \draw (0.25,0) -- (0.25,0.6);
            \draw[wipe] (-0.25,0) \braidup (0.75,0.6);
            \draw (-0.25,0) node[anchor=north] {\dotlabel{Y'}} \braidup (0.75,0.6) \braidup (-0.25,1.2) -- (-0.25,1.8);
            \draw[wipe] (0.25,0.6) -- (0.25,1.8);
            \draw (0.25,0.6) -- (0.25,1.8);
            \draw[wipe] (-0.75,0.8) \braidup (0.75,1.6);
            \draw (-0.75,0) node[anchor=north] {\dotlabel{X'}} -- (-0.75,0.8) \braidup (0.75,1.6) \braidup (-0.75,2.4);
            \draw[wipe] (-0.25,1.8) -- (-0.25,2.4);
            \draw (-0.25,1.8) -- (-0.25,2.4);
            \draw[wipe] (0.25,1.8) -- (0.25,2.4);
            \draw (0.25,1.8) -- (0.25,2.4) node[anchor=south] {\dotlabel{Z}};
            \coupon{0.75,0.6}{\theta};
            \coupon{0.75,1.6}{\theta};
        \end{tikzpicture}
        =
        \begin{tikzpicture}[anchorbase]
            \draw (0,0) -- (0,1.2);
            \draw[wipe] (-0.5,0) to[out=45,in=down] (0.7,0.7);
            \draw (-0.5,0) node[anchor=north] {\dotlabel{Y'}} to[out=45,in=down] (0.7,0.7) \braidup (0.5,1.2);
            \draw[wipe] (-1,0) \braidup (1,1.2);
            \draw (-1,0) node[anchor=north] {\dotlabel{X'}} \braidup (1,1.2) \braidup (-1,2.4);
            \draw[wipe] (0.5,1.2) \braidup (0.7,1.8);
            \draw (0.5,1.2) \braidup (0.7,1.8) to[out=up,in=-45] (-0.5,2.4);
            \draw[wipe] (0,1.2) -- (0,2.4);
            \draw (0,1.2) -- (0,2.4) node[anchor=south] {\dotlabel{Z}};
            \coupon{0.5,1.2}{\theta};
            \coupon{1,1.2}{\theta};
        \end{tikzpicture}
        \overset{\cref{swirl}}{=}
        \begin{tikzpicture}[centerzero]
            \draw (0,-0.6) -- (0,0);
            \draw[wipe] (-0.5,-0.6) to[out=45,in=down] (0.5,0);
            \draw (-0.5,-0.6) node[anchor=north] {\dotlabel{X' \otimes Y'}} to[out=45,in=down] (0.5,0);
            \draw (0.5,0) to[out=up,in=-45] (-0.5,0.6);
            \draw[wipe] (0,0) -- (0,0.6);
            \draw (0,0) -- (0,0.6) node[anchor=south] {\dotlabel{Z}};
            \coupon{0.5,0}{\theta};
        \end{tikzpicture}
        =
        \begin{tikzpicture}[centerzero]
            \draw (0,-0.4) node[anchor=north] {\dotlabel{X \otimes Y}} -- (0,0.4);
            \posdot{0,0};
        \end{tikzpicture}
        \cdot 1_Z.
    \]
    The fourth relation in \cref{jaguar+} is straightforward to verify using \cref{spin}.
\end{proof}

The following result relates affinization of a monoidal category to the representation theoretic approach to affinization mentioned in the introduction.

\begin{cor} \label{gopher}
    Suppose $\cC$ is a braided strict monoidal category, $\cM$ is a balanced strict monoidal category, and $F \colon \cC \to \cM$ is a braided monoidal functor.  Let $A \colon \cC \to \cEnd(\cM)$ be the action functor of \cref{viper}.
    \begin{enumerate}
        \item For each $X \in \Ob(\cC)$, the collection $\Xi_X := (\xi_X \cdot 1_M)_{M \in \Ob(\cM)}$ is an endomorphism of the functor $A(X) \colon \cM \to \cM$ (i.e.\ a natural transformation from $A(X)$ to itself).
        \item The collection $(\Xi_X)_{X \in \Ob(\cC)}$ is an endomorphism of $A$.
    \end{enumerate}
\end{cor}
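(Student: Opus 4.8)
The plan is to deduce both parts at once from \cref{salamander}. By that theorem, the action $A$ of $\cC$ on $\cM$ extends (uniquely) to an action of the full affinization; write $\widehat{A}\colon\Aff(\cC)\to\cEnd(\cM)$ for the resulting monoidal functor. Comparing \cref{gecko} with \cref{viper} shows that $\widehat{A}$ restricts to $A$ on $\cC$: it takes the same value $\widehat{A}(X)=A(X)$ on each object (recall $\Ob(\Aff(\cC))=\Ob(\cC)$), and $\widehat{A}(f)=A(f)$ for each $f\in\Mor(\cC)$. Moreover, unwinding the notation $h\cdot g$ for the action of a morphism $h$ of $\Aff(\cC)$ on a morphism $g$ of $\cM$, one has $\widehat{A}(h)_M=h\cdot 1_M$; in particular $\widehat{A}(\xi_X)_M=\xi_X\cdot 1_M=\Xi_{X,M}$, so $\Xi_X=\widehat{A}(\xi_X)$. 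Thus $\Xi_X$ is simply the image of the coil $\xi_X$ under $\widehat{A}$.

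For part (a), I would then argue: since $\xi_X\colon X\to X$ is a morphism of $\Aff(\cC)$ and $\widehat{A}$ is a functor, its image $\Xi_X=\widehat{A}(\xi_X)$ is a morphism of $\cEnd(\cM)$ from $\widehat{A}(X)=A(X)$ to itself, which is exactly a natural transformation $A(X)\Rightarrow A(X)$. (If one wishes to bypass the extended action, the claim unwinds via \cref{snake} to the identity $(1_X\cdot g)\circ\Xi_{X,M}=\xi_X\cdot g=\Xi_{X,N}\circ(1_X\cdot g)$ for $g\colon M\to N$ in $\cM$, which in turn follows from the naturality of the braiding $\beta$ of $\cM$.)

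For part (b), the content is that $(\Xi_X)_{X\in\Ob(\cC)}$ is natural in $X$, i.e.\ that $A(f)\circ\Xi_X=\Xi_Y\circ A(f)$ as natural transformations $A(X)\Rightarrow A(Y)$ for every $f\in\Hom_\cC(X,Y)$. Here I would invoke the Dehn twist relation of \cref{Dehn} (equivalently, the fourth relation in \cref{jaguar+}), namely $\xi_Y\circ f=f\circ\xi_X$ in $\Aff(\cC)$; applying the functor $\widehat{A}$ to this identity, and using $\widehat{A}(f)=A(f)$, $\widehat{A}(\xi_X)=\Xi_X$, and $\widehat{A}(\xi_Y)=\Xi_Y$, gives precisely $\Xi_Y\circ A(f)=A(f)\circ\Xi_X$. (A self-contained check reduces this, via \cref{snake}, to the naturality of $\beta$ together with the naturality of the twist $\theta$, i.e.\ \cref{spin}.)

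Since the whole statement is just the functoriality of $\widehat{A}$ applied to facts that already hold in $\Aff(\cC)$ --- that $\xi_X$ is an endomorphism of $X$ and, by \cref{Dehn}, commutes with every morphism of $\cC$ --- I do not expect any genuine obstacle. The only point needing care is the bookkeeping of the first paragraph: confirming that the extension $\widehat{A}$ furnished by \cref{salamander} really does restrict to $A$ on $\cC$ and that $\Xi_X=\widehat{A}(\xi_X)$, after which (a) and (b) are formal. If a proof not referring to $\widehat{A}$ is preferred, the sole computation of substance is the component-wise verification of the two naturality squares directly from the formula \cref{snake}, which is routine given the naturality of $\beta$ and $\theta$.
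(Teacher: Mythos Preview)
Your proposal is correct and takes essentially the same approach as the paper. The paper's proof of (b) is verbatim ``This follows from \cref{salamander} and the last relation in \cref{jaguar+},'' which is exactly your argument via $\widehat{A}$ applied to the Dehn twist relation; for (a) the paper writes out the direct naturality square diagrammatically rather than invoking functoriality of $\widehat{A}$, but you note this alternative yourself in the parenthetical, and the two are equivalent.
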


\begin{proof}
    \begin{enumerate}[wide]
        \item For a morphism $g \colon M \to N$ in $\cM$, we have
            \[
                \big( A(X)(g) \big) \circ (\xi_X \cdot 1_M)
                =
                \begin{tikzpicture}[anchorbase]
                    \draw (0,-0.6) node[anchor=north] {\dotlabel{M}} -- (0,0);
                    \draw[wipe] (-0.5,-0.6) to[out=45,in=down] (0.5,0);
                    \draw (-0.5,-0.6) node[anchor=north east] {\dotlabel{F(X)}} to[out=45,in=down] (0.5,0);
                    \draw (0.5,0) to[out=up,in=down] (-0.5,0.8) -- (-0.5,1);
                    \draw[wipe] (0,0) -- (0,1);
                    \draw (0,0) -- (0,1) node[anchor=south] {\dotlabel{N}};
                    \coupon{0,0.7}{g};
                    \coupon{0.5,0}{\theta};
                \end{tikzpicture}
                =
                \begin{tikzpicture}[anchorbase]
                    \draw (0,-1) node[anchor=north] {\dotlabel{M}} -- (0,0);
                    \draw[wipe] (-0.5,-0.6) to[out=45,in=down] (0.5,0);
                    \draw (-0.5,-1) node[anchor=north east] {\dotlabel{F(X)}} -- (-0.5,-0.8) to[out=up,in=down] (0.5,0);
                    \draw (0.5,0) to[out=up,in=-45] (-0.5,0.6);
                    \draw[wipe] (0,0) -- (0,0.6);
                    \draw (0,0) -- (0,0.6) node[anchor=south] {\dotlabel{N}};
                    \coupon{0,-0.7}{g};
                    \coupon{0.5,0}{\theta};
                \end{tikzpicture}
                =
                (\xi_X \cdot 1_N) \circ \big( A(X)(g) \big).
            \]
            Hence $\Xi_X$ is a natural transformation from $A(X)$ to itself.

        \item This follows from \cref{salamander} and the last relation in \cref{jaguar+}.
    \end{enumerate}
\end{proof}

For the remainder of this section, suppose that $\cC$ is a balanced strict monoidal category.  Taking $\cM = \cC$ and $F = \id_\cC$ to be the identity functor, \cref{salamander} implies that $\Aff(\cC)$ acts on $\cC$.  This extends the natural action of $\cC$ on itself given by the tensor product.  Considering the action on the unit object and its identity morphism, it then follows that we have a functor
\begin{equation} \label{flatten}
    \Aff(\cC) \to \cC,\quad
    X \mapsto X,\quad
    f \mapsto f \cdot 1_\one,\quad
    X \in \Ob(\Aff(\cC)),\ f \in \Mor(\Aff(\cC)).
\end{equation}
Note that, under the functor \cref{flatten}, we have
\[
    \xi_X \mapsto \theta_X,\quad
    \xi_{X,Y} \mapsto \beta_{X,Y} \circ (1_X \otimes \theta_Y),
\]
where we use \cref{bootstrap}.

\begin{cor} \label{inject}
    If $\cC$ is a balanced strict monoidal category, then the functor $\cC \to \Aff(\cC)$ that is the identity on objects and sends $f \in \Mor(\cC)$ to the corresponding morphism in $\Aff(\cC)$ is faithful.
\end{cor}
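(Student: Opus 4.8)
The plan is to show that the functor $\iota \colon \cC \to \Aff(\cC)$ in the statement admits a left inverse; since a functor with a left inverse is automatically faithful, this suffices.

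The candidate left inverse is the functor $\pi \colon \Aff(\cC) \to \cC$ of \cref{flatten}, which is available precisely because $\cC$ is assumed balanced: applying \cref{salamander} with $\cM = \cC$ and $F = \id_\cC$ (a braided monoidal functor, since $\cC$ is in particular braided) produces an action of $\Aff(\cC)$ on $\cC$, and evaluating this action on the unit object $\one$ and its identity morphism gives $\pi$, which is $X \mapsto X$ on objects and $f \mapsto f \cdot 1_\one$ on morphisms.

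The key step is the identity $\pi \circ \iota = \id_\cC$. On objects this is immediate. On a morphism $f$ of $\cC$, the definition of $\iota$ sends $f$ to the corresponding morphism of $\Aff(\cC)$, and then the first formula in \cref{gecko} (with $\cM = \cC$, $F = \id_\cC$) yields $\pi(\iota(f)) = f \cdot 1_\one = F(f) \otimes 1_\one = f \otimes 1_\one = f$, where the last equality uses that $\one$ is a strict unit object. Hence for $f, g \in \Mor(\cC)$ with $\iota(f) = \iota(g)$, applying $\pi$ gives $f = g$, so $\iota$ is faithful.

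I do not expect a genuine obstacle here: all the substance is already contained in the construction of the action in \cref{salamander}, and this corollary is just the observation that ``placing a diagram in $\cM = \cC$ inside the trivial cylinder'' undoes the inclusion $\cC \hookrightarrow \Aff(\cC)$. In particular, the retraction argument establishes faithfulness without any need to analyse the defining relations \cref{coilrel1,coilrel2} (equivalently the first, third, and fourth relations in \cref{jaguar+}) of $\Aff(\cC)$ directly.
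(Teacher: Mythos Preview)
Your argument is correct and is exactly the approach taken in the paper: compose the inclusion $\cC \to \Aff(\cC)$ with the functor \cref{flatten} and observe that the composite is the identity functor on $\cC$, whence faithfulness. The paper states this in one line; your version simply spells out the verification on morphisms.
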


\begin{proof}
    It is straightforward to verify that the composite $\cC \to \Aff(\cC) \xrightarrow{\cref{flatten}} \cC$ is the identity functor, which implies the result.
\end{proof}

In light of \cref{inject}, we will view $\cC$ as a subcategory of $\Aff(\cC)$.  We expect that the natural functor $\cC \to \Aff(\cC)$ is faithful even when $\cC$ is not balanced.

Recall that if $\cC$ and $\cD$ are $\kk$-linear categories, then $\cC \boxtimes \cD$ is the $\kk$-linear category whose objects are pairs $(X,Y)$ with $X \in \Ob(\cC)$ and $Y \in \Ob(\cD)$.  Morphisms are given by
\[
    \Hom_{\cC \boxtimes \cD}((X_1,Y_1),(X_2,Y_2))
    = \Hom_\cC(X_1,X_2) \otimes_\kk \Hom_\cD(Y_1,Y_2).
\]
Composition is componentwise on simple tensors, and extended by linearity.

If $\cC$ is a strict $\kk$-linear monoidal category, then a \emph{module category} over $\cC$ is a $\kk$-linear category $\cM$, together with a $\kk$-linear monoidal functor $\cC \to \cEnd_\kk(\cM)$, where $\cEnd_\kk(\cM)$ denotes the strict $\kk$-linear monoidal category with objects that are the $\kk$-linear endofunctors of $\cM$ and morphisms that are natural transformations.  Equivalently, it is a $\kk$-linear functor $- \otimes - \colon \cC \boxtimes \cM \to \cM$ satisfying associativity and unity axioms.  All of the results of the current section go through in this linear setting with the obvious modifications.

\section{Pivotal structures}

We continue to assume throughout this section that $\cC$ is a strict monoidal category.  Recall that $\cC$ is said to be \emph{right rigid} if all objects have right duals.  This means that, for every object $X \in \Ob(\cC)$, there is another object $X^\vee \in \Ob(\cC)$, called the \emph{right dual} of $X$, together with unit and counit morphisms
\begin{equation} \label{rcps}
    \eta_X =
    \begin{tikzpicture}[centerzero={0,-0.15}]
        \draw[->] (-0.3,0) to[out=down,in=down,looseness=2] (0.3,0) node[anchor=south] {\dotlabel{X}};
    \end{tikzpicture}
    \ \colon \one \to X^\vee \otimes X,\qquad
    \epsilon_X =
    \begin{tikzpicture}[centerzero={0,0.15}]
        \draw[->] (-0.3,0) node[anchor=north] {\dotlabel{X}} to[out=up,in=up,looseness=2] (0.3,0);
    \end{tikzpicture}
    \ \colon X \otimes X^\vee \to \one,
\end{equation}
such that
\begin{equation} \label{zigright}
    \begin{tikzpicture}[centerzero={0.3,0.5}]
        \draw[<-] (0.6,0) node[anchor=north] {\dotlabel{X}} to (0.6,0.5) to[out=up,in=up,looseness=2] (0.3,0.5) to[out=down,in=down,looseness=2] (0,0.5) to (0,1);
    \end{tikzpicture}
    \ =\
    \begin{tikzpicture}[centerzero={0,0.5}]
        \draw[<-] (0,0) node[anchor=north] {\dotlabel{X}} to (0,1);
    \end{tikzpicture}
    \ ,\quad
    \begin{tikzpicture}[centerzero={0.3,0.5}]
        \draw[<-] (0.6,1) to (0.6,0.5) to[out=down,in=down,looseness=2] (0.3,0.5) to[out=up,in=up,looseness=2] (0,0.5) to (0,0) node[anchor=north] {\dotlabel{X}};
    \end{tikzpicture}
    \ =\
    \begin{tikzpicture}[centerzero={0,0.5}]
        \draw[->] (0,0) node[anchor=north] {\dotlabel{X}} to (0,1);
    \end{tikzpicture}
    \ .
\end{equation}
Here we use an upward oriented string to denote the identity morphism of an object $X$ and a downward oriented string to denote the identity morphism of its right dual $X^\vee$.

A strict monoidal category $\cC$ is \emph{left rigid} if all objects have left duals.  This means that, for every object $X \in \Ob(\cC)$, there is another object $\leftdual{X} \in \Ob(\cC)$, called the \emph{left dual} of $X$, together with morphisms
\begin{equation} \label{lcps}
    \eta_X' =
    \begin{tikzpicture}[centerzero={0,-0.15}]
        \draw[<-] (-0.3,0) node[anchor=south] {\dotlabel{X}} to[out=down,in=down,looseness=2] (0.3,0);
    \end{tikzpicture}
    \ \colon \one \to X \otimes \leftdual{X},\qquad
    \epsilon_X' =
    \begin{tikzpicture}[centerzero={0,0.15}]
        \draw[<-] (-0.3,0) to[out=up,in=up,looseness=2] (0.3,0) node[anchor=north] {\dotlabel{X}};
    \end{tikzpicture}
    \ \colon \leftdual{X} \otimes X \to \one,
\end{equation}
such that
\begin{equation} \label{zigleft}
    \begin{tikzpicture}[centerzero={0.3,0.5}]
        \draw[->] (0.6,0) node[anchor=north] {\dotlabel{X}} to (0.6,0.5) to[out=up,in=up,looseness=2] (0.3,0.5) to[out=down,in=down,looseness=2] (0,0.5) to (0,1);
    \end{tikzpicture}
    \ =\
    \begin{tikzpicture}[centerzero={0,0.5}]
        \draw[->] (0,0) node[anchor=north] {\dotlabel{X}} to (0,1);
    \end{tikzpicture}
    \ ,\quad
    \begin{tikzpicture}[centerzero={0.3,0.5}]
        \draw[->] (0.6,1) to (0.6,0.5) to[out=down,in=down,looseness=2] (0.3,0.5) to[out=up,in=up,looseness=2] (0,0.5) to (0,0) node[anchor=north] {\dotlabel{X}};
    \end{tikzpicture}
    \ =\
    \begin{tikzpicture}[centerzero={0,0.5}]
        \draw[<-] (0,0) node[anchor=north] {\dotlabel{X}} to (0,1);
    \end{tikzpicture}
    \ .
\end{equation}
Here we use an upward oriented string to denote the identity morphism of an object $X$ and a downward oriented string to denote the identity morphism of its left dual $\leftdual{X}$.

The strict monoidal category $\cC$ is \emph{rigid} if it is both left rigid and right rigid.  If $\cC$ is a \emph{braided} strict monoidal category, then it is left rigid if and only if it is right rigid (hence rigid).  In this case, the left and right duals of $X$ are isomorphic.  Hence there is no ambiguity in using a downward strand labeled $X$ to denote the dual.

\begin{rem} \label{selfdual}
    If $X \in \Ob(\cC)$ is \emph{self-dual}, i.e.\ $X = X^\vee = \leftdual{X}$, then the upward and downward oriented strands are equal, and it is natural to draw these strands without orientation.  In particular, the left and right caps (resp.\ cups) above are equal.
\end{rem}

The following result is straightforward.

\begin{lem}
    If $X$ is right dual (resp.\ left dual) to $Y$ in $\cC$, then the same is true in $\Aff(\cC)$, using the same units and counits.  In particular, if $\cC$ is left rigid (resp.\ right rigid, rigid) then so is $\Aff(\cC)$.
\end{lem}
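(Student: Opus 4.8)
The plan is to use the fact that being a dual pair is a property of morphisms that is preserved by any strict monoidal functor, together with the observation that $\cC$ sits inside $\Aff(\cC)$ via such a functor.

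First I would introduce the canonical functor $\iota \colon \cC \to \Aff(\cC)$ that is the identity on objects and sends each morphism of $\cC$ to the corresponding morphism of $\Aff(\cC)$; it is well defined directly from \cref{affdef}. When $\cC$ is braided --- which is the setting in which $\Aff(\cC)$ carries a monoidal structure, by \cref{afftensor} --- this functor is moreover \emph{strict monoidal}: by the construction in \cref{afftensor}, $f \otimes 1_W$ and $1_W \otimes f$ are defined to be the tensor products already computed in $\cC$ whenever $f$ is a morphism of $\cC$, so $\iota$ intertwines $\otimes$ (and it trivially intertwines $\circ$ and preserves $\one$). This is essentially the only point requiring verification.

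Next, suppose $X$ is right dual to $Y$ in $\cC$, with unit $\eta \colon \one \to X \otimes Y$ and counit $\epsilon \colon Y \otimes X \to \one$, so that the two zigzag identities of \cref{zigright} hold in $\cC$, namely $(\epsilon \otimes 1_Y) \circ (1_Y \otimes \eta) = 1_Y$ and $(1_X \otimes \epsilon) \circ (\eta \otimes 1_X) = 1_X$. Each side of these equations is a composite of tensor products of identity morphisms with $\eta$ and $\epsilon$, so applying the strict monoidal functor $\iota$ shows that $\iota(\eta) \colon \one \to X \otimes Y$ and $\iota(\epsilon) \colon Y \otimes X \to \one$ satisfy exactly the same two identities in $\Aff(\cC)$. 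Hence $X$ is right dual to $Y$ in $\Aff(\cC)$ via the same unit and counit. The left-dual case is handled identically, replacing \cref{rcps,zigright} by \cref{lcps,zigleft} throughout.

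Finally, the ``in particular'' clause is immediate: since $\Ob(\Aff(\cC)) = \Ob(\cC)$, if every object of $\cC$ has a left dual (resp.\ right dual, resp.\ both) in $\cC$, then by the previous paragraph every object of $\Aff(\cC)$ has a left dual (resp.\ right dual, resp.\ both), so $\Aff(\cC)$ is left rigid (resp.\ right rigid, rigid). I do not anticipate any genuine obstacle; the one mild subtlety is checking that $\iota$ is strict monoidal, and that is read off directly from \cref{afftensor}.
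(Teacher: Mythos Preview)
Your argument is correct, and it is precisely what the paper has in mind: the lemma is stated without proof (``The following result is straightforward''), and what you wrote is the straightforward verification---namely that the inclusion $\iota \colon \cC \to \Aff(\cC)$ is strict monoidal (by the construction in \cref{afftensor}, which \emph{extends} the tensor product of $\cC$) and therefore preserves the zigzag identities.

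One small remark: you are right to flag that the statement only makes sense once $\Aff(\cC)$ carries a monoidal structure, i.e.\ once $\cC$ is braided. The paper does not say this explicitly at the lemma, but the monoidal structure on $\Aff(\cC)$ used throughout that section is the one from \cref{afftensor}, so the braided hypothesis is implicit. Your handling of this point is appropriate.
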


The category $\cC$ is \emph{strict pivotal} if it is a rigid strict monoidal category and we have the following:
\begin{enumerate}
    \item For all objects $X$ and $Y$ in $\cC$,
        \[
            (X^\vee)^\vee = X,\quad
            (X \otimes Y)^\vee = Y^\vee \otimes X^\vee,\quad
            \one^\vee = \one.
        \]

    \item For all objects $X$ and $Y$ in $\cC$, we have
        \[
            \begin{tikzpicture}[anchorbase]
                \draw[->] (-0.45,0) node[anchor=south] {\dotlabel{X \otimes Y}} to[out=down,in=down,looseness=2] (0.45,0);
            \end{tikzpicture}
            \ =
            \begin{tikzpicture}[anchorbase]
                \draw[->] (-0.6,0) node[anchor=south] {\dotlabel{X}} to[out=down,in=down,looseness=2] (0.6,0);
                \draw[->] (-0.3,0) node[anchor=south] {\dotlabel{Y}} to[out=down,in=down,looseness=2] (0.3,0);
            \end{tikzpicture}
            \qquad \text{and} \qquad
            \begin{tikzpicture}[anchorbase]
                \draw[->] (-0.45,0) node[anchor=north] {\dotlabel{X \otimes Y}} to[out=up,in=up,looseness=2] (0.45,0);
            \end{tikzpicture}
            \ =
            \begin{tikzpicture}[anchorbase]
                \draw[->] (-0.6,0) node[anchor=north] {\dotlabel{X}} to[out=up,in=up,looseness=2] (0.6,0);
                \draw[->] (-0.3,0) node[anchor=north] {\dotlabel{Y}} to[out=up,in=up,looseness=2] (0.3,0);
            \end{tikzpicture}
            \ .
      \]

    \item For every morphism $f \colon X \to Y$ in $\cC$, its right and left mates are equal:
        \[
            f^\vee :=
            \begin{tikzpicture}[anchorbase]
                \draw[<-] (0.4,-0.4) node[anchor=north] {\dotlabel{Y}} to (0.4,0) to[out=up,in=up,looseness=2] (0,0) to[out=down,in=down,looseness=2] (-0.4,0) to (-0.4,0.4) node[anchor=south] {\dotlabel{X}};
                \coupon{0,0}{f};
            \end{tikzpicture}
            \ =\
            \begin{tikzpicture}[anchorbase]
                \draw[<-] (-0.4,-0.4) node[anchor=north] {\dotlabel{Y}} to (-0.4,0) to[out=up,in=up,looseness=2] (0,0) to[out=down,in=down,looseness=2] (0.4,0) to (0.4,0.4) node[anchor=south] {\dotlabel{X}};
                \coupon{0,0}{f};
            \end{tikzpicture}
            \ .
        \]
\end{enumerate}

A braided pivotal category is the same as a balanced rigid category; see, for example, \cite[Cor.~4.21]{Sel11}.  In particular, if $\cC$ is a braided strict pivotal category, then it has a twist given by
\begin{equation} \label{looptwist}
    \theta_X :=
    \begin{tikzpicture}[centerzero]
    	\draw[<-] (0,0.6) to (0,0.3);
    	\draw (0.3,-0.2) to [out=0,in=-90](.5,0);
    	\draw (0.5,0) to [out=90,in=0](.3,0.2);
    	\draw (0,-0.3) to (0,-0.6) node[anchor=north] {\dotlabel{X}};
    	\draw (0,0.3) to [out=-90,in=180] (.3,-0.2);
    	\draw[wipe] (0.3,.2) to [out=180,in=90](0,-0.3);
    	\draw (0.3,.2) to [out=180,in=90](0,-0.3);
    \end{tikzpicture}
    \ ,\quad X \in \Ob(\cC).
\end{equation}
A \emph{ribbon category} (also called a \emph{tortile category}) is a braided pivotal category satisfying
\begin{equation}
    \begin{tikzpicture}[centerzero]
    	\draw[<-] (0,0.6) to (0,0.3);
    	\draw (-0.3,-0.2) to[out=180,in=-90](-0.5,0);
    	\draw (-0.5,0) to[out=90,in=180] (-0.3,0.2);
    	\draw (0,-0.3) to (0,-0.6) node[anchor=north] {\dotlabel{X}};
    	\draw (-0.3,.2) to[out=0,in=90] (0,-0.3);
    	\draw[wipe] (0,0.3) to[out=-90,in=0] (-0.3,-0.2);
    	\draw (0,0.3) to[out=-90,in=0] (-0.3,-0.2);
    \end{tikzpicture}
    \ =\
    \begin{tikzpicture}[centerzero]
    	\draw[<-] (0,0.6) to (0,0.3);
    	\draw (0.3,-0.2) to [out=0,in=-90](.5,0);
    	\draw (0.5,0) to [out=90,in=0](0.3,0.2);
    	\draw (0,-0.3) to (0,-0.6) node[anchor=north] {\dotlabel{X}};
    	\draw (0,0.3) to [out=-90,in=180] (0.3,-0.2);
    	\draw[wipe] (0.3,.2) to [out=180,in=90](0,-0.3);
    	\draw (0.3,.2) to [out=180,in=90](0,-0.3);
    \end{tikzpicture}
    \ ,\quad X \in \Ob(\cC).
\end{equation}
We say a category is a \emph{strict ribbon category} if is it a ribbon category that is strict pivotal.

\begin{theo} \label{rainbow}
    If $\cC$ is a braided strict pivotal category, then the same units and counits endow $\Aff(\cC)$ with the structure of a strict pivotal category.  Furthermore, we have
    \begin{equation} \label{dotrot}
        \xi_X^\vee = \xi_{X^\vee}^{-1},\quad
        X \in \Ob(\cC).
    \end{equation}
\end{theo}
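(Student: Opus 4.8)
The plan is the following. By the preceding lemma, $\Aff(\cC)$ is rigid, with the duals $X^\vee$, $\leftdual{X}$ and all the units and counits the same as in $\cC$. Hence the identities $(X^\vee)^\vee = X$, $(X \otimes Y)^\vee = Y^\vee \otimes X^\vee$, $\one^\vee = \one$, and the compatibility of the cup and cap with $\otimes$, all hold in $\Aff(\cC)$ because they hold in $\cC$ and involve only objects and morphisms of $\cC$. So the only remaining point is to check that the left and right mates of an arbitrary morphism of $\Aff(\cC)$ agree.

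First I would prove \cref{dotrot}. The right mate of $\xi_X \colon X \to X$ is $\xi_X^\vee = (1_{X^\vee} \otimes \epsilon_X) \circ (1_{X^\vee} \otimes \xi_X \otimes 1_{X^\vee}) \circ (\eta_X \otimes 1_{X^\vee})$. The key point is that $\eta_X \colon \one \to X^\vee \otimes X$ is a morphism of $\cC$, so the coil is natural with respect to it by \cref{Dehn}: $\xi_{X^\vee \otimes X} \circ \eta_X = \eta_X \circ \xi_\one = \eta_X$, using $\xi_\one = 1_\one$ from \cref{cable}. Combined with $\xi_{X^\vee \otimes X} = \xi_{X^\vee} \otimes \xi_X$ (the third relation in \cref{jaguar+}), this gives $(1_{X^\vee} \otimes \xi_X) \circ \eta_X = (\xi_{X^\vee}^{-1} \otimes 1_X) \circ \eta_X$. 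Substituting this into the formula for $\xi_X^\vee$ and using the interchange law to pull the resulting $\xi_{X^\vee}^{-1}$ out to the left past the evaluation, what remains is $\xi_{X^\vee}^{-1} \circ \big( (1_{X^\vee} \otimes \epsilon_X) \circ (\eta_X \otimes 1_{X^\vee}) \big) = \xi_{X^\vee}^{-1}$ by the zigzag identity \cref{zigright}. Equivalently, in the cut-open cylinder picture \cref{dotrot} is simply the planar isotopy that reverses the orientation of a strand wrapping once around the cylinder, turning a coil into its inverse.

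The left-mate version then follows formally. Applying the left-mate operation $\leftdual{(-)}$ to \cref{dotrot}, using $\leftdual{(f^\vee)} = f$ (which holds since $\Aff(\cC)$ is rigid with $(X^\vee)^\vee = X$) together with the fact that $X = (X^\vee)^\vee$ for every object, one gets $\leftdual{\xi_Z} = \xi_{Z^\vee}^{-1} = \xi_Z^\vee$ for all $Z$. So the two mates agree on every coil, and they agree on every morphism of $\cC$ because $\cC$ is strict pivotal. Since the left and right mate operations are contravariant and tensor-reversing — and, given the object identities and cup/cap compatibility noted above, strictly so on $\Aff(\cC)$ — the collection of morphisms on which the two mates agree is closed under composition, tensor product, and inverses; by \cref{plain}, the morphisms of $\cC$ together with the coils generate all of $\Aff(\cC)$, so the two mates agree everywhere. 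This shows $\Aff(\cC)$ is strict pivotal, and \cref{dotrot} has been established along the way.

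I expect the main obstacle to be identifying the right mechanism for \cref{dotrot}: once one observes that the coil is natural with respect to $\eta_X$ (because $\eta_X$ lies in $\cC$) and that $\xi_{X^\vee \otimes X}$ factors as $\xi_{X^\vee} \otimes \xi_X$, the rest is a short computation with the interchange law and one zigzag identity. The only delicate bookkeeping is tracking which tensor factor the inverse coil lands on; and, if one prefers the diagrammatic argument, justifying each isotopy move using only the relations \cref{jaguar+}, \cref{wrap}, and the zigzag identities, since $\Aff(\cC)$ is not braided.
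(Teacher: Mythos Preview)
Your proposal is correct and follows essentially the same approach as the paper. The paper's algebraic proof also reduces to the positive dots, and uses the same mechanism (the tensor decomposition $\xi_{X^\vee}\otimes\xi_X=\xi_{X^\vee\otimes X}$ together with naturality of the coil and $\xi_\one=1_\one$), except that it applies naturality to the counit $\epsilon_X$ rather than the unit $\eta_X$; these are dual computations. The only mild difference is that the paper verifies both mates of $\xi_X$ equal $\xi_{X^\vee}^{-1}$ directly (and also gives a separate topological proof via cylinder pictures), whereas you prove the right-mate identity and then deduce the left-mate identity formally from $\leftdual{(f^\vee)}=f$, which is a slightly slicker bookkeeping of the same content.
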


\begin{proof}
    The left and right mates of the morphisms of $\cC$ are equal since $\cC$ is strict pivotal.  Thus, to show that $\Aff(\cC)$ is strict pivotal, it suffices to show that the left and right mates of the positive dots are equal.  (It then automatically follows that the left and right mates of the negative dots are equal, since they are inverse to the positive dots.)  So it is enough to show that
    \begin{equation} \label{dotrotpic}
        \begin{tikzpicture}[centerzero]
            \draw[<-] (0.4,-0.4) node[anchor=north] {\dotlabel{X}} to (0.4,0) to[out=up,in=up,looseness=2] (0,0) to[out=down,in=down,looseness=2] (-0.4,0) to (-0.4,0.4);
            \posdot{0,0};
        \end{tikzpicture}
        \ =\
        \begin{tikzpicture}[centerzero]
            \draw[<-] (-0.4,-0.4) node[anchor=north] {\dotlabel{X}} to (-0.4,0) to[out=up,in=up,looseness=2] (0,0) to[out=down,in=down,looseness=2] (0.4,0) to (0.4,0.4);
            \posdot{0,0};
        \end{tikzpicture}
        \ =\
        \begin{tikzpicture}[anchorbase]
            \draw[<-] (0,-0.3) -- (0,0.3);
            \negdot{0,0};
        \end{tikzpicture}
        \ .
    \end{equation}
    To illustrate the two viewpoints, topological and algebraic, we give two proofs of these identities.

    For a topological proof, we compute
    \begin{gather*}
        \begin{tikzpicture}[centerzero]
            \draw[<-] (0.4,-0.5) node[anchor=north] {\dotlabel{X}} to (0.4,0.1) to[out=up,in=up,looseness=2] (0,0.1) -- (0,-0.1) to[out=down,in=down,looseness=2] (-0.4,-0.1) to (-0.4,0.5);
            \posdot{0,0};
            \draw (-0.7,-0.5) -- (-0.7,0.5);
            \draw (-1,-0.5) -- (-1,0.5);
            \draw (-1.3,-0.5) -- (-1.3,0.5);
            \draw (-1.6,-0.5) -- (-1.6,0.5);
            \draw (0.7,-0.5) -- (0.7,0.5);
            \draw (1,-0.5) -- (1,0.5);
            \draw (1.3,-0.5) -- (1.3,0.5);
        \end{tikzpicture}
        \ =\
        \begin{tikzpicture}[centerzero]
            \draw[<-] (0.4,-0.5) node[anchor=north] {\dotlabel{X}} -- (0.4,0.1) arc(0:90:0.2) to[out=west,in=east] (-1.9,0);
            \draw[wipe] (-0.7,-0.5) -- (-0.7,0.5);
            \draw[wipe] (-1,-0.5) -- (-1,0.5);
            \draw[wipe] (-1.3,-0.5) -- (-1.3,0.5);
            \draw[wipe] (-1.6,-0.5) -- (-1.6,0.5);
            \draw (-0.7,-0.5) -- (-0.7,0.5);
            \draw (-1,-0.5) -- (-1,0.5);
            \draw (-1.3,-0.5) -- (-1.3,0.5);
            \draw (-1.6,-0.5) -- (-1.6,0.5);
            \draw (0.7,-0.5) -- (0.7,0.5);
            \draw (1,-0.5) -- (1,0.5);
            \draw (1.3,-0.5) -- (1.3,0.5);
            \draw[wipe] (-0.4,0.5) -- (-0.4,-0.1) arc(180:270:0.2) to[out=east,in=west] (1.6,0);
            \draw[->] (-0.4,0.5) -- (-0.4,-0.1) arc(180:270:0.2) to[out=east,in=west] (1.6,0);
            \identify{-1.9}{-0.5}{1.6}{0.5};
        \end{tikzpicture}
        \ =\
        \begin{tikzpicture}[centerzero]
            \draw[<-] (0,-0.5) node[anchor=north] {\dotlabel{X}} -- (0,0.5);
            \negdot{0,0};
            \draw (-0.3,-0.5) -- (-0.3,0.5);
            \draw (-0.6,-0.5) -- (-0.6,0.5);
            \draw (-0.9,-0.5) -- (-0.9,0.5);
            \draw (-1.2,-0.5) -- (-1.2,0.5);
            \draw (0.3,-0.5) -- (0.3,0.5);
            \draw (0.6,-0.5) -- (0.6,0.5);
            \draw (0.9,-0.5) -- (0.9,0.5);
        \end{tikzpicture}
        \ ,
        \\
        \begin{tikzpicture}[centerzero]
            \draw[<-] (-0.4,-0.5) node[anchor=north] {\dotlabel{X}} to (-0.4,0.1) to[out=up,in=up,looseness=2] (0,0.1) -- (0,-0.1) to[out=down,in=down,looseness=2] (0.4,-0.1) to (0.4,0.5);
            \posdot{0,0};
            \draw (-0.7,-0.5) -- (-0.7,0.5);
            \draw (-1,-0.5) -- (-1,0.5);
            \draw (-1.3,-0.5) -- (-1.3,0.5);
            \draw (-1.6,-0.5) -- (-1.6,0.5);
            \draw (0.7,-0.5) -- (0.7,0.5);
            \draw (1,-0.5) -- (1,0.5);
            \draw (1.3,-0.5) -- (1.3,0.5);
        \end{tikzpicture}
        \ =\
        \begin{tikzpicture}[centerzero]
            \draw (-0.25,0.1) to[out=west,in=east] (-1.9,0);
            \draw[wipe] (-0.4,-0.5) -- (-0.4,0.25) arc(180:-90:0.15);
            \draw[<-] (-0.4,-0.5) node[anchor=north] {\dotlabel{X}} -- (-0.4,0.25) arc(180:-90:0.15);
            \draw[wipe] (-0.7,-0.5) -- (-0.7,0.5);
            \draw[wipe] (-1,-0.5) -- (-1,0.5);
            \draw[wipe] (-1.3,-0.5) -- (-1.3,0.5);
            \draw[wipe] (-1.6,-0.5) -- (-1.6,0.5);
            \draw (-0.7,-0.5) -- (-0.7,0.5);
            \draw (-1,-0.5) -- (-1,0.5);
            \draw (-1.3,-0.5) -- (-1.3,0.5);
            \draw (-1.6,-0.5) -- (-1.6,0.5);
            \draw (0.7,-0.5) -- (0.7,0.5);
            \draw (1,-0.5) -- (1,0.5);
            \draw (1.3,-0.5) -- (1.3,0.5);
            \draw (0.4,0.5) -- (0.4,-0.25) arc(360:90:0.15);
            \draw[wipe] (0.25,-0.1) to[out=east,in=west] (1.6,0);
            \draw[->] (0.25,-0.1) to[out=east,in=west] (1.6,0);
            \identify{-1.9}{-0.5}{1.6}{0.5};
        \end{tikzpicture}
        \ \overset{\cref{whip}}{=}\
        \begin{tikzpicture}[centerzero]
            \draw (-1.9,0) -- (-0.35,0) arc(90:-180:0.15);
            \draw[wipe] (-0.5,-0.15) to[out=up,in=west] (-0.2,0.15);
            \draw (-0.5,-0.15) to[out=up,in=west] (-0.2,0.15) to[out=east,in=west] (0.15,-0.15) arc(-90:90:0.15);
            \draw[wipe] (0.15,0.15) to[out=west,in=up] (-0.1,-0.5);
            \draw[->] (0.15,0.15) to[out=west,in=up] (-0.1,-0.5) node[anchor=north] {\dotlabel{X}};
            \draw[wipe] (-0.7,-0.5) -- (-0.7,0.5);
            \draw[wipe] (-1,-0.5) -- (-1,0.5);
            \draw[wipe] (-1.3,-0.5) -- (-1.3,0.5);
            \draw[wipe] (-1.6,-0.5) -- (-1.6,0.5);
            \draw (-0.7,-0.5) -- (-0.7,0.5);
            \draw (-1,-0.5) -- (-1,0.5);
            \draw (-1.3,-0.5) -- (-1.3,0.5);
            \draw (-1.6,-0.5) -- (-1.6,0.5);
            \draw (0.7,-0.5) -- (0.7,0.5);
            \draw (1,-0.5) -- (1,0.5);
            \draw (1.3,-0.5) -- (1.3,0.5);
            \draw[wipe] (0.4,0.5) -- (0.4,0.3) to[out=down,in=west] (1.6,0);
            \draw[->] (0.4,0.5) -- (0.4,0.3) to[out=down,in=west] (1.6,0);
            \identify{-1.9}{-0.5}{1.6}{0.5};
        \end{tikzpicture}
        \ =\
        \begin{tikzpicture}[centerzero]
            \draw[<-] (0,-0.5) node[anchor=north] {\dotlabel{X}} -- (0,0.5);
            \negdot{0,0};
            \draw (-0.3,-0.5) -- (-0.3,0.5);
            \draw (-0.6,-0.5) -- (-0.6,0.5);
            \draw (-0.9,-0.5) -- (-0.9,0.5);
            \draw (-1.2,-0.5) -- (-1.2,0.5);
            \draw (0.3,-0.5) -- (0.3,0.5);
            \draw (0.6,-0.5) -- (0.6,0.5);
            \draw (0.9,-0.5) -- (0.9,0.5);
        \end{tikzpicture}
        \ ,
    \end{gather*}
    where, in the last equality, we used the fact that the two curls appearing in the penultimate diagram are inverses of each other (see \cref{spiral}).

    For an algebraic proof of \cref{dotrotpic}, we first note that by attaching appropriate cups and caps and using \cref{zigright,zigleft}, the identities \cref{dotrotpic} are equivalent to
    \begin{equation}
        \begin{tikzpicture}[centerzero]
            \draw[->] (-0.2,-0.25) -- (-0.2,0) arc(180:0:0.2) -- (0.2,-0.25);
            \posdot{-0.2,0};
        \end{tikzpicture}
        =
        \begin{tikzpicture}[centerzero]
            \draw[->] (-0.2,-0.25) -- (-0.2,0) arc(180:0:0.2) -- (0.2,-0.25);
            \negdot{0.2,0};
        \end{tikzpicture}
        \ ,\quad
        \begin{tikzpicture}[centerzero]
            \draw[<-] (-0.2,-0.25) -- (-0.2,0) arc(180:0:0.2) -- (0.2,-0.25);
            \posdot{0.2,0};
        \end{tikzpicture}
        =
        \begin{tikzpicture}[centerzero]
            \draw[<-] (-0.2,-0.25) -- (-0.2,0) arc(180:0:0.2) -- (0.2,-0.25);
            \negdot{-0.2,0};
        \end{tikzpicture}
        \ .
    \end{equation}
    We prove the first identity, since the proof of the second is analogous.  Adding a positive dot to the bottom-right strand, we see that it suffices to prove
    \[
        \begin{tikzpicture}[centerzero]
            \draw[->] (-0.2,-0.25) node[anchor=north] {\dotlabel{X}} -- (-0.2,0) arc(180:0:0.2) -- (0.2,-0.25);
            \posdot{-0.2,0};
            \posdot{0.2,0};
        \end{tikzpicture}
        =
        \begin{tikzpicture}[centerzero]
            \draw[->] (-0.2,-0.25) node[anchor=north] {\dotlabel{X}} -- (-0.2,0) arc(180:0:0.2) -- (0.2,-0.25);
        \end{tikzpicture}
        \ .
    \]
    By the third relation in \cref{jaguar+}, the left-hand side is the composite of a single positive dot on a strand labelled $X \otimes X^\vee$ and the counit (the right cap).  Then, by the fourth relation in \cref{jaguar+}, we can slide this positive dot above the counit.  Since $\xi_\one = 1_\one$, this gives the right-hand side above.
\end{proof}

If $\cC$ is a braided strict pivotal category and $X \in \Ob(\cC)$, then we define the invertible \emph{dots}
\begin{equation}
    \begin{tikzpicture}[centerzero]
        \draw[->] (0,-0.3) node[anchor=north] {\dotlabel{X}} -- (0,0.3);
        \opendot{0,0};
    \end{tikzpicture}
    :=
    \begin{tikzpicture}[centerzero]
        \draw[->] (0,-0.3) node[anchor=north] {\dotlabel{X}} -- (0,0.3);
        \posdot{0,0};
    \end{tikzpicture}
    \ ,\qquad
    \begin{tikzpicture}[centerzero]
        \draw[<-] (0,-0.3) node[anchor=north] {\dotlabel{X}} -- (0,0.3);
        \opendot{0,0};
    \end{tikzpicture}
    :=
    \begin{tikzpicture}[centerzero]
        \draw[<-] (0,-0.3) node[anchor=north] {\dotlabel{X}} -- (0,0.3);
        \negdot{0,0};
    \end{tikzpicture}
    \ .
\end{equation}
It then follows from \cref{dotrotpic} that dots slide over cups and caps:
\begin{equation} \label{porcupine}
    \begin{tikzpicture}[centerzero]
        \draw[->] (-0.2,-0.2) -- (-0.2,0) arc(180:0:0.2) -- (0.2,-0.2);
        \opendot{-0.2,0};
    \end{tikzpicture}
    =
    \begin{tikzpicture}[centerzero]
        \draw[->] (-0.2,-0.2) -- (-0.2,0) arc(180:0:0.2) -- (0.2,-0.2);
        \opendot{0.2,0};
    \end{tikzpicture}
    \ ,\quad
    \begin{tikzpicture}[centerzero]
        \draw[->] (-0.2,0.2) -- (-0.2,0) arc(180:360:0.2) -- (0.2,0.2);
        \opendot{-0.2,0};
    \end{tikzpicture}
    =
    \begin{tikzpicture}[centerzero]
        \draw[->] (-0.2,0.2) -- (-0.2,0) arc(180:360:0.2) -- (0.2,0.2);
        \opendot{0.2,0};
    \end{tikzpicture}
    \ ,\quad
    \begin{tikzpicture}[centerzero]
        \draw[<-] (-0.2,-0.2) -- (-0.2,0) arc(180:0:0.2) -- (0.2,-0.2);
        \opendot{-0.2,0};
    \end{tikzpicture}
    =
    \begin{tikzpicture}[centerzero]
        \draw[<-] (-0.2,-0.2) -- (-0.2,0) arc(180:0:0.2) -- (0.2,-0.2);
        \opendot{0.2,0};
    \end{tikzpicture}
    \ ,\quad
    \begin{tikzpicture}[centerzero]
        \draw[<-] (-0.2,0.2) -- (-0.2,0) arc(180:360:0.2) -- (0.2,0.2);
        \opendot{-0.2,0};
    \end{tikzpicture}
    =
    \begin{tikzpicture}[centerzero]
        \draw[<-] (-0.2,0.2) -- (-0.2,0) arc(180:360:0.2) -- (0.2,0.2);
        \opendot{0.2,0};
    \end{tikzpicture}
    \ .
\end{equation}

If $X$ is self-dual, so that $X = X^\vee$, the above convention for open dots is still valid, but not very useful in practice.  As noted in \cref{selfdual}, one typically denotes the identity of a self-dual object by an unoriented strand, in which case we cannot introduce the open dot as above.  Instead, we have
\begin{equation} \label{wolverine}
    \begin{tikzpicture}[centerzero]
        \draw (-0.2,-0.2) -- (-0.2,0) arc(180:0:0.2) -- (0.2,-0.2);
        \posdot{-0.2,0};
    \end{tikzpicture}
    =
    \begin{tikzpicture}[centerzero]
        \draw (-0.2,-0.2) -- (-0.2,0) arc(180:0:0.2) -- (0.2,-0.2);
        \negdot{0.2,0};
    \end{tikzpicture}
    \ ,\quad
    \begin{tikzpicture}[centerzero]
        \draw (-0.2,-0.2) -- (-0.2,0) arc(180:0:0.2) -- (0.2,-0.2);
        \negdot{-0.2,0};
    \end{tikzpicture}
    =
    \begin{tikzpicture}[centerzero]
        \draw (-0.2,-0.2) -- (-0.2,0) arc(180:0:0.2) -- (0.2,-0.2);
        \posdot{0.2,0};
    \end{tikzpicture}
    \ ,\quad
    \begin{tikzpicture}[centerzero]
        \draw (-0.2,0.2) -- (-0.2,0) arc(180:360:0.2) -- (0.2,0.2);
        \posdot{-0.2,0};
    \end{tikzpicture}
    =
    \begin{tikzpicture}[centerzero]
        \draw (-0.2,0.2) -- (-0.2,0) arc(180:360:0.2) -- (0.2,0.2);
        \negdot{0.2,0};
    \end{tikzpicture}
    \ ,\quad
    \begin{tikzpicture}[centerzero]
        \draw (-0.2,0.2) -- (-0.2,0) arc(180:360:0.2) -- (0.2,0.2);
        \negdot{-0.2,0};
    \end{tikzpicture}
    =
    \begin{tikzpicture}[centerzero]
        \draw (-0.2,0.2) -- (-0.2,0) arc(180:360:0.2) -- (0.2,0.2);
        \posdot{0.2,0};
    \end{tikzpicture}
    \ .
\end{equation}

\begin{rem}
    Many of our categories will have generating morphisms given by braidings for the generating objects and possibly cups and caps coming from dual objects.  We will want to give a presentation for the affinization of such a category $\cC$, following \cref{plain}, by adjoining the $\xi_X$ for $X$ ranging over the set of generating objects.  Then $\xi_X$ is defined on all objects using the third relation in \cref{jaguar+}.  We impose the first relation in \cref{jaguar+} (and then the second relation follows) and then it remains to impose the fourth relation in \cref{jaguar+} with the coupon there ranging over the generating morphisms of $\cC$.  When the coupon is a braiding on generating objects $X$ and $Y$, we have, using the first two relations in \cref{jaguar+}
    \[
        \begin{tikzpicture}[centerzero]
            \draw (-0.3,-0.3) node[anchor=north] {\dotlabel{X}} -- (0.3,0.3);
            \draw[wipe] (0.3,-0.3) -- (-0.3,0.3);
            \draw (0.3,-0.3) node[anchor=north] {\dotlabel{Y}} -- (-0.3,0.3);
            \posdot{-0.18,-0.18};
            \posdot{0.18,-0.18};
        \end{tikzpicture}
        =
        \begin{tikzpicture}[centerzero]
            \draw (0.3,-0.3) node[anchor=north] {\dotlabel{Y}} -- (-0.3,0.3);
            \draw[wipe] (-0.3,-0.3) -- (0.3,0.3);
            \draw (-0.3,-0.3) node[anchor=north] {\dotlabel{X}} -- (0.3,0.3);
            \posdot{0.15,0.15};
            \posdot{0.18,-0.18};
        \end{tikzpicture}
        =
        \begin{tikzpicture}[centerzero]
            \draw (-0.3,-0.3) node[anchor=north] {\dotlabel{X}} -- (0.3,0.3);
            \draw[wipe] (0.3,-0.3) -- (-0.3,0.3);
            \draw (0.3,-0.3) node[anchor=north] {\dotlabel{Y}} -- (-0.3,0.3);
            \posdot{-0.18,0.18};
            \posdot{0.18,0.18};
        \end{tikzpicture}
        \ .
    \]
    So the fourth relation in \cref{jaguar+} is automatically satisfied when the coupon is a braiding.  If the coupon is a right cap, then, as explained in the proof of \cref{rainbow}, the fourth relation in \cref{jaguar+} is equivalent to
    \[
        \begin{tikzpicture}[centerzero]
            \draw[->] (-0.2,-0.25) node[anchor=north] {\dotlabel{X}} -- (-0.2,0) arc(180:0:0.2) -- (0.2,-0.25);
            \posdot{-0.2,0};
        \end{tikzpicture}
        =
        \begin{tikzpicture}[centerzero]
            \draw[->] (-0.2,-0.25) node[anchor=north] {\dotlabel{X}} -- (-0.2,0) arc(180:0:0.2) -- (0.2,-0.25);
            \negdot{0.2,0};
        \end{tikzpicture}
        \ ,
    \]
    and similarly for left caps.  Thus, it suffices to impose the cap-slide relations in \cref{porcupine} (for non-self-dual objects) or \cref{wolverine} (for self-dual objects); the corresponding cup relations then follow using \cref{zigright,zigleft}.
\end{rem}

\section{Tangles\label{sec:tangles}}

Many of the examples to be discussed in the sequel will be constructed from categories of tangles.  In this section, we fix our conventions and recall some concepts that will be common to many of these examples.

Let
\[
    D = (0,1)^2
    \qquad \text{and} \qquad
    A = S^1 \times (0,1)
\]
be the disc and the annulus, respectively.  In order to make various categories of tangles strict, we need to fix a countable number of points in $D$, which will be the possible endpoints of our tangles.  We choose the points
\begin{equation} \label{endpoints}
    P_n = \left( 1 - \tfrac{1}{2^n}, \tfrac{1}{2} \right),\quad n \in \Z_{>0}.
\end{equation}
We make the identification
\begin{equation} \label{lasso}
    A = ([0,1] \times (0,1))/\sim,
\end{equation}
where $\sim$ is the relation given by $(0,b) \sim (1,b)$ for all $b \in (0,1)$, and we also view the $P_n$ as points in $A$.  Up to isomorphism, our categories will not depend on the particular choice of points.  We will typically draw them as equally spaced, or adjust the spacing to the particular tangle we draw.

We let $\FOT(D)$ be the category of framed oriented tangles over $D$.  Its objects are finite sequences $(\varepsilon_1,\dotsc,\varepsilon_n)$ of elements of $\{\uparrow,\downarrow\}$.  The unit object $\one$ is the empty sequence.  Morphisms in $\FOT(D)$ from $(\varepsilon_1,\dotsc,\varepsilon_m)$ to $(\varepsilon'_1,\dotsc,\varepsilon'_n)$ are framed oriented tangles in $D \times [0,1]$, up to ambient isotopy, with endpoints
\[
    \left( \{P_1,\dotsc,P_m\} \times \{0\} \right) \cup \left( \{P_1,\dotsc,P_n\} \times \{1\} \right)
\]
such that the orientation of the tangle at each $P_i \times \{0\}$ agrees with $\varepsilon_i$, the orientation at each $P_i' \times \{1\}$ agrees with $\varepsilon'_i$, and the framing at the point $P_i \times \{0\}$ (respectively, $P_i \times \{1\}$) points towards $P_{i+1} \times \{0\}$ (respectively, $P_{i+1} \times \{1\}$).  We allow tangles to have closed components.  For example,
\[
    \begin{tikzpicture}[anchorbase]
        \draw[->] (0.5,-0.6) to[out=up,in=up,looseness=2] (1.5,-0.6);
        \draw[wipe] (1,-0.6) -- (1,-0.3) to[out=up,in=down] (-0.5,1);
        \draw[->] (1,-0.6) -- (1,-0.3) to[out=up,in=down] (-0.5,1);
    	\draw[wipe] (0,1) to (0,0.3);
    	\draw[<-] (0,1) to (0,0.3);
    	\draw (0.3,-0.2) to[out=0,in=-90](.5,0);
    	\draw (0.5,0) to[out=90,in=0](0.3,0.2);
    	\draw (0,-0.3) to (0,-0.6);
    	\draw (0,0.3) to[out=-90,in=180] (0.3,-0.2);
    	\draw[wipe] (0.3,0.2) to[out=180,in=90](0,-0.3);
    	\draw (0.3,0.2) to[out=180,in=90](0,-0.3);
        \draw (1.2,0.6) to[out=up,in=up,looseness=1.5] (1.6,0.6);
        \draw[wipe] (0.5,1) to[out=down,in=down,looseness=2] (1.5,1);
        \draw[->] (0.5,1) to[out=down,in=down,looseness=2] (1.5,1);
        \draw[wipe] (1.2,0.6) to[out=down,in=down,looseness=1.5] (1.6,0.6);
        \draw[->] (1.2,0.6) to[out=down,in=down,looseness=1.5] (1.6,0.6);
        \draw[->] (-0.5,-0.6) to[out=up,in=up,looseness=2] (-1,-0.6);
    \end{tikzpicture}
    \in \Hom_{\FOT(D)}((\downarrow,\uparrow,\uparrow,\uparrow,\uparrow,\downarrow), (\uparrow,\uparrow,\downarrow,\uparrow)),
\]
where we adopt the convention of blackboard framing (i.e.\ the framing is parallel to the page).  The composite $f \circ g$ is given by placing $f$ above $g$ and rescaling the vertical coordinate.  The category $\FOT(D)$ is a strict monoidal category, with tensor product $f \otimes g$ given by placing the $f$ to the left of $g$ and rescaling.  Objects can be written as (possibly empty) tensor products of the objects $\uparrow$ and $\downarrow$.

We let $\OT(D)$ denote the strict monoidal category of oriented tangles over $D$.  This is defined as above, but without the framing.  Similarly, forgetting orientations, we let $\FT(D)$ and $\T(D)$ denote the strict monoidal categories of framed tangles over $D$ and tangles over $D$, respectively.  Here the objects are natural numbers, since we have no orientations of the endpoints.

Replacing the disc $D$ by the annulus $A$, we obtain the categories $\FOT(A)$, $\OT(A)$, $\FT(A)$, and $\T(A)$ of framed oriented tangles, oriented tangles, framed tangles, and tangles over the annulus.  We draw these by cutting along $\{(0,b,c) : b \in (0,1),\ c \in [0,1]\}$ (see \cref{lasso}) in order to draw, for example,
\begin{equation} \label{pizza}
    \begin{tikzpicture}[anchorbase]
        \draw[->] (0.5,-0.6) to[out=up,in=225] (1.8,0.2);
        \draw[->] (-1.3,0.2) to[out=45,in=down] (-1,1);
        \draw[wipe] (1,-0.6) -- (1,-0.3) to[out=up,in=down] (-0.5,1);
        \draw[<-] (1,-0.6) -- (1,-0.3) to[out=up,in=down] (-0.5,1);
    	\draw[wipe] (0,1) to (0,0.3);
    	\draw[<-] (0,1) to (0,0.3);
    	\draw (0.3,-0.2) to[out=0,in=-90](.5,0);
    	\draw (0.5,0) to[out=90,in=0](0.3,0.2);
    	\draw (0,-0.3) to (0,-0.6);
    	\draw (0,0.3) to[out=-90,in=180] (0.3,-0.2);
    	\draw[wipe] (0.3,0.2) to[out=180,in=90](0,-0.3);
    	\draw (0.3,0.2) to[out=180,in=90](0,-0.3);
        \draw (1.2,0.6) to[out=up,in=up,looseness=1.5] (1.6,0.6);
        \draw[wipe] (0.5,1) to[out=down,in=down,looseness=2] (1.5,1);
        \draw[->] (0.5,1) to[out=down,in=down,looseness=2] (1.5,1);
        \draw[wipe] (1.2,0.6) to[out=down,in=down,looseness=1.5] (1.6,0.6);
        \draw[->] (1.2,0.6) to[out=down,in=down,looseness=1.5] (1.6,0.6);
        \identify{-1.3}{-0.6}{1.8}{1};
    \end{tikzpicture}
    \in \Hom_{\FOT(A)}(\uparrow \otimes \uparrow \otimes \downarrow, \uparrow \otimes \downarrow \otimes \uparrow \otimes \downarrow \otimes \uparrow),
\end{equation}
where we identify the dashed vertical edges.  We always isotope tangles so that they intersect the cut transversely.

The categories $\FOT(A)$, $\OT(A)$, $\FT(A)$, and $\T(A)$ are also strict monoidal categories, although some care must be taken with the tensor product.  Viewing $A \times [0,1]$ as the cylinder, the tensor product $f \otimes g$ is given by placing the cylinder for $g$ inside the cylinder for $f$, then rescaling and isotoping the endpoints of the tangles so that the endpoints of $g$ are to the right of those of $f$ (preserving the relative order of the endpoints in $f$ and the endpoints in $g$).  In terms of diagrams as in \cref{pizza}, this corresponds to placing the diagram of $g$ to the right of the diagram of $f$, and then passing all strands of $f$ exiting the right side of its diagram over the diagram for $g$ and all strands of $g$ exiting the left side of its diagram under the diagram for $f$.  For example,
\[
    \begin{tikzpicture}[centerzero]
        \identify{-0.7}{-0.5}{0.7}{0.5};
        \draw[->] (-0.3,-0.5) \braidup (0.3,0.5);
        \draw[->] (0.3,-0.5) to[out=up,in=200] (0.7,0);
        \draw[->] (-0.7,0) to[out=20,in=down] (-0.3,0.5);
    \end{tikzpicture}
    \ \otimes\
    \begin{tikzpicture}[anchorbase]
        \identify{-0.7}{-0.5}{0.7}{0.5};
        \draw[->] (0,-0.5) \braidup (-0.3,0.5);
        \draw[<-] (0.3,-0.5) \braidup (0,0.5);
        \draw[->] (-0.3,-0.5) to[out=up,in=-20] (-0.7,0);
        \draw[->] (0.7,0) to[out=160,in=down] (0.3,0.5);
    \end{tikzpicture}
    \ =\
    \begin{tikzpicture}[centerzero]
        \identify{-0.9}{-0.5}{0.9}{0.5};
        \draw[->] (0,-0.5) to[out=up,in=-45] (-0.9,-0.15);
        \draw[->] (0.3,-0.5) \braidup (0,0.5);
        \draw[<-] (0.6,-0.5) \braidup (0.3,0.5);
        \draw[->] (0.9,-0.15) to[out=135,in=down] (0.6,0.5);
        \draw[wipe] (-0.3,-0.5) to[out=up,in=225] (0.9,0.15);
        \draw[->] (-0.3,-0.5) to[out=up,in=225] (0.9,0.15);
        \draw[->] (-0.9,0.15) to[out=45,in=down] (-0.6,0.5);
        \draw[wipe] (-0.6,-0.5) \braidup (-0.3,0.5);
        \draw[->] (-0.6,-0.5) \braidup (-0.3,0.5);
    \end{tikzpicture}
    \ .
\]

The category $\Braid(D)$ of braids over $D$ is the strict monoidal subcategory of $\T(D)$ whose morphisms have no closed components.  Equivalently, $\Braid(D)$ is the strict monoidal subcategory of $\OT(D)$ with objects generated by the object $\uparrow$ and morphisms containing no closed components.  We define $\Braid(A)$ similarly.

For a commutative ring $\kk$, we use a subscript $\kk$ to denote the $\kk$-linearization of a strict monoidal category.  For example, $\FOT(D)_\kk$ is the $\kk$-linearization of $\FOT(D)$.

Many of the examples to be introduced in the sections to follow share some conventions and defining relations.  In order to make our presentation efficient, we introduce these first here.

The ``oriented'' categories to follow will be generated by two objects, $\uparrow$ and $\downarrow$, whose identity morphisms we denote by an upward and downward strand, respectively.  Morphisms will be string diagrams with oriented strings.  The domain and codomain of such diagrams can be read from the orientations of the strands at the bottom and top of the diagram in the usual way.  The ``unoriented'' categories will be generated by a single object $\go$, whose identity morphism we denote by an unoriented strand.  String diagrams will then involve unoriented strings.

The following relations will be important (here $z,t,\delta \in \kk$):
\begin{gather}\tag{R0} \label{R0}
    \begin{tikzpicture}[centerzero={0.3,0.5}]
        \draw (0.6,0) -- (0.6,0.5) to[out=up,in=up,looseness=2] (0.3,0.5) to[out=down,in=down,looseness=2] (0,0.5) -- (0,1);
    \end{tikzpicture}
    \ =\
    \begin{tikzpicture}[centerzero={0,0.5}]
        \draw (0,0) -- (0,1);
    \end{tikzpicture}
    \ =\
    \begin{tikzpicture}[centerzero={0.3,0.5}]
        \draw (0.6,1) -- (0.6,0.5) to[out=down,in=down,looseness=2] (0.3,0.5) to[out=up,in=up,looseness=2] (0,0.5) -- (0,0);
    \end{tikzpicture}
    \ ,\qquad
    \begin{tikzpicture}[centerzero]
        \draw (-0.2,-0.3) -- (-0.2,-0.1) arc(180:0:0.2) -- (0.2,-0.3);
        \draw[wipe] (-0.3,0.3) \braiddown (0,-0.3);
        \draw (-0.3,0.3) \braiddown (0,-0.3);
    \end{tikzpicture}
    =
    \begin{tikzpicture}[centerzero]
        \draw (-0.2,-0.3) -- (-0.2,-0.1) arc(180:0:0.2) -- (0.2,-0.3);
        \draw[wipe] (0.3,0.3) \braiddown (0,-0.3);
        \draw (0.3,0.3) \braiddown (0,-0.3);
    \end{tikzpicture}
    \ ,\qquad
    \begin{tikzpicture}[centerzero]
        \draw (-0.3,0.3) \braiddown (0,-0.3);
        \draw[wipe] (-0.2,-0.3) -- (-0.2,-0.1) arc(180:0:0.2) -- (0.2,-0.3);
        \draw (-0.2,-0.3) -- (-0.2,-0.1) arc(180:0:0.2) -- (0.2,-0.3);
    \end{tikzpicture}
    =
    \begin{tikzpicture}[centerzero]
        \draw (0.3,0.3) \braiddown (0,-0.3);
        \draw[wipe] (-0.2,-0.3) -- (-0.2,-0.1) arc(180:0:0.2) -- (0.2,-0.3);
        \draw (-0.2,-0.3) -- (-0.2,-0.1) arc(180:0:0.2) -- (0.2,-0.3);
    \end{tikzpicture}
    \ ,
    \\ \tag{R1} \label{R1}
    \begin{tikzpicture}[centerzero]
    	\draw (0,0.6) to (0,0.3);
    	\draw (-0.3,-0.2) to[out=180,in=-90](-0.5,0);
    	\draw (-0.5,0) to[out=90,in=180] (-0.3,0.2);
    	\draw (0,-0.3) to (0,-0.6);
    	\draw (-0.3,.2) to[out=0,in=90] (0,-0.3);
    	\draw[wipe] (0,0.3) to[out=-90,in=0] (-0.3,-0.2);
    	\draw (0,0.3) to[out=-90,in=0] (-0.3,-0.2);
    \end{tikzpicture}
    \ =\
    \begin{tikzpicture}[centerzero]
        \draw (0,-0.6) -- (0,0.6);
    \end{tikzpicture}
    \ =\
    \begin{tikzpicture}[centerzero]
    	\draw (0,0.6) to (0,0.3);
    	\draw (0.3,-0.2) to [out=0,in=-90](.5,0);
    	\draw (0.5,0) to [out=90,in=0](0.3,0.2);
    	\draw (0,-0.3) to (0,-0.6);
    	\draw (0,0.3) to [out=-90,in=180] (0.3,-0.2);
    	\draw[wipe] (0.3,.2) to [out=180,in=90](0,-0.3);
    	\draw (0.3,.2) to [out=180,in=90](0,-0.3);
    \end{tikzpicture}
    \ ,
    \\ \tag{FR1} \label{FR1}
    \begin{tikzpicture}[centerzero]
    	\draw (0,0.6) to (0,0.3);
    	\draw (-0.3,-0.2) to[out=180,in=-90](-0.5,0);
    	\draw (-0.5,0) to[out=90,in=180] (-0.3,0.2);
    	\draw (0,-0.3) to (0,-0.6);
    	\draw (-0.3,.2) to[out=0,in=90] (0,-0.3);
    	\draw[wipe] (0,0.3) to[out=-90,in=0] (-0.3,-0.2);
    	\draw (0,0.3) to[out=-90,in=0] (-0.3,-0.2);
    \end{tikzpicture}
    \ =\
    \begin{tikzpicture}[centerzero]
    	\draw (0,0.6) to (0,0.3);
    	\draw (0.3,-0.2) to [out=0,in=-90](.5,0);
    	\draw (0.5,0) to [out=90,in=0](0.3,0.2);
    	\draw (0,-0.3) to (0,-0.6);
    	\draw (0,0.3) to [out=-90,in=180] (0.3,-0.2);
    	\draw[wipe] (0.3,.2) to [out=180,in=90](0,-0.3);
    	\draw (0.3,.2) to [out=180,in=90](0,-0.3);
    \end{tikzpicture}
    \ ,
    \\ \tag{R2} \label{R2}
    \begin{tikzpicture}[centerzero]
        \draw (0.25,-0.5) to[out=135,in=down] (-0.2,0) to[out=up,in=225] (0.25,0.5);
        \draw[wipe] (-0.25,-0.5) to[out=45,in=down] (0.2,0) to[out=up,in=-45] (-0.25,0.5);
        \draw (-0.25,-0.5) to[out=45,in=down] (0.2,0) to[out=up,in=-45] (-0.25,0.5);
    \end{tikzpicture}
    \ =\
    \begin{tikzpicture}[centerzero]
        \draw (-0.2,-0.5) -- (-0.2,0.5);
        \draw (0.2,-0.5) -- (0.2,0.5);
    \end{tikzpicture}
    \ =\
    \begin{tikzpicture}[centerzero]
        \draw (-0.25,-0.5) to[out=45,in=down] (0.2,0) to[out=up,in=-45] (-0.25,0.5);
        \draw[wipe] (0.25,-0.5) to[out=135,in=down] (-0.2,0) to[out=up,in=225] (0.25,0.5);
        \draw (0.25,-0.5) to[out=135,in=down] (-0.2,0) to[out=up,in=225] (0.25,0.5);
    \end{tikzpicture}
    \ ,
    \\ \tag{R3} \label{R3}
    \begin{tikzpicture}[centerzero]
        \draw (0.5,-0.5) -- (-0.5,0.5);
        \draw[wipe] (0,-0.5) to[out=135,in=down] (-0.4,0) to[out=up,in=225] (0,0.5);
        \draw (0,-0.5) to[out=135,in=down] (-0.4,0) to[out=up,in=225] (0,0.5);
        \draw[wipe] (-0.5,-0.5) -- (0.5,0.5);
        \draw (-0.5,-0.5) -- (0.5,0.5);
    \end{tikzpicture}
    \ =\
    \begin{tikzpicture}[centerzero]
        \draw (0.5,-0.5) -- (-0.5,0.5);
        \draw[wipe] (0,-0.5) to[out=45,in=down] (0.4,0) to[out=up,in=-45] (0,0.5);
        \draw (0,-0.5) to[out=45,in=down] (0.4,0) to[out=up,in=-45] (0,0.5);
        \draw[wipe] (-0.5,-0.5) -- (0.5,0.5);
        \draw (-0.5,-0.5) -- (0.5,0.5);
    \end{tikzpicture}
    \ ,
    \\ \tag{{$\text{KS}_+$}} \label{KS+}
    \begin{tikzpicture}[centerzero]
        \draw (0.3,-0.4) -- (-0.3,0.4);
        \draw[wipe] (-0.3,-0.4) -- (0.3,0.4);
        \draw (-0.3,-0.4) -- (0.3,0.4);
    \end{tikzpicture}
    +
    \begin{tikzpicture}[centerzero]
        \draw (-0.3,-0.4) -- (0.3,0.4);
        \draw[wipe] (0.3,-0.4) -- (-0.3,0.4);
        \draw (0.3,-0.4) -- (-0.3,0.4);
    \end{tikzpicture}
    = z\
    \begin{tikzpicture}[anchorbase]
        \draw (-0.2,-0.4) -- (-0.2,0.4);
        \draw (0.2,-0.4) -- (0.2,0.4);
    \end{tikzpicture}
    + z\
    \begin{tikzpicture}[anchorbase]
        \draw (-0.2,0.4) -- (-0.2,0.3) arc(180:360:0.2) -- (0.2,0.4);
        \draw (-0.2,-0.4) -- (-0.2,-0.3) arc(180:0:0.2) -- (0.2,-0.4);
    \end{tikzpicture}
    \ ,
    \\ \tag{{$\text{KS}_-$}} \label{KS-}
    \begin{tikzpicture}[centerzero]
        \draw (0.3,-0.4) -- (-0.3,0.4);
        \draw[wipe] (-0.3,-0.4) -- (0.3,0.4);
        \draw (-0.3,-0.4) -- (0.3,0.4);
    \end{tikzpicture}
    -
    \begin{tikzpicture}[centerzero]
        \draw (-0.3,-0.4) -- (0.3,0.4);
        \draw[wipe] (0.3,-0.4) -- (-0.3,0.4);
        \draw (0.3,-0.4) -- (-0.3,0.4);
    \end{tikzpicture}
    = z\
    \begin{tikzpicture}[anchorbase]
        \draw (-0.2,-0.4) -- (-0.2,0.4);
        \draw (0.2,-0.4) -- (0.2,0.4);
    \end{tikzpicture}
    - z\
    \begin{tikzpicture}[anchorbase]
        \draw (-0.2,0.4) -- (-0.2,0.3) arc(180:360:0.2) -- (0.2,0.4);
        \draw (-0.2,-0.4) -- (-0.2,-0.3) arc(180:0:0.2) -- (0.2,-0.4);
    \end{tikzpicture}
    \ ,
    \\ \tag{KB} \label{KB}
    \begin{tikzpicture}[centerzero]
        \draw (0.3,-0.4) -- (-0.3,0.4);
        \draw[wipe] (-0.3,-0.4) -- (0.3,0.4);
        \draw (-0.3,-0.4) -- (0.3,0.4);
    \end{tikzpicture}
    \ := q\
    \begin{tikzpicture}[centerzero]
        \draw (-0.2,-0.4) -- (-0.2,0.4);
        \draw (0.2,-0.4) -- (0.2,0.4);
    \end{tikzpicture}
    \ + q^{-1}\
    \begin{tikzpicture}[centerzero]
        \draw (-0.2,0.4) -- (-0.2,0.3) arc(180:360:0.2) -- (0.2,0.4);
        \draw (-0.2,-0.4) -- (-0.2,-0.3) arc(180:0:0.2) -- (0.2,-0.4);
    \end{tikzpicture}
    \ ,
    \\ \tag{CS} \label{CS}
    \posupcross - \negupcross = z\
    \begin{tikzpicture}[anchorbase]
        \draw[->] (-0.15,-0.2) -- (-0.15,0.2);
        \draw[->] (0.15,-0.2) -- (0.15,0.2);
    \end{tikzpicture}
    \ ,
    \\ \tag{T} \label{T}
    \begin{tikzpicture}[centerzero]
    	\draw (0,0.6) to (0,0.3);
    	\draw (0.3,-0.2) to [out=0,in=-90](.5,0);
    	\draw (0.5,0) to [out=90,in=0](0.3,0.2);
    	\draw (0,-0.3) to (0,-0.6);
    	\draw (0,0.3) to [out=-90,in=180] (0.3,-0.2);
    	\draw[wipe] (0.3,.2) to [out=180,in=90](0,-0.3);
    	\draw (0.3,.2) to [out=180,in=90](0,-0.3);
    \end{tikzpicture}
    = t\
    \begin{tikzpicture}[centerzero]
        \draw (0,-0.6) -- (0,0.6);
    \end{tikzpicture}
    \ ,
    \\ \tag{D} \label{D}
    \begin{tikzpicture}[centerzero]
        \bubun{0,0};
    \end{tikzpicture}
    = \delta 1_\one.
    \\ \tag{UA} \label{UA}
    \begin{tikzpicture}[centerzero]
        \draw (-0.3,-0.3) -- (0.3,0.3);
        \draw[wipe] (0.3,-0.3) -- (-0.3,0.3);
        \draw (0.3,-0.3) -- (-0.3,0.3);
        \posdot{-0.18,-0.18};
    \end{tikzpicture}
    =
    \begin{tikzpicture}[centerzero]
        \draw (0.3,-0.3) -- (-0.3,0.3);
        \draw[wipe] (-0.3,-0.3) -- (0.3,0.3);
        \draw (-0.3,-0.3) -- (0.3,0.3);
        \posdot{0.15,0.15};
    \end{tikzpicture}
    \ ,\qquad
    \begin{tikzpicture}[centerzero]
        \draw (-0.2,-0.2) -- (-0.2,0) arc(180:0:0.2) -- (0.2,-0.2);
        \posdot{-0.2,0};
    \end{tikzpicture}
    =
    \begin{tikzpicture}[centerzero]
        \draw (-0.2,-0.2) -- (-0.2,0) arc(180:0:0.2) -- (0.2,-0.2);
        \negdot{0.2,0};
    \end{tikzpicture}
    \ ,\qquad
    \begin{tikzpicture}[centerzero]
        \draw (0,-0.3) -- (0,0.3);
        \posdot{0,0.12};
        \negdot{0,-0.12};
    \end{tikzpicture}
    =
    \begin{tikzpicture}[centerzero]
        \draw (0,-0.3) -- (0,0.3);
        \negdot{0,0.12};
        \posdot{0,-0.12};
    \end{tikzpicture}
    =
    \begin{tikzpicture}[centerzero]
        \draw (0,-0.3) -- (0,0.3);
    \end{tikzpicture}
    \ ,
    \\ \tag{OA} \label{OA}
    \begin{tikzpicture}[centerzero]
        \draw[->] (-0.3,-0.3) -- (0.3,0.3);
        \draw[wipe] (0.3,-0.3) -- (-0.3,0.3);
        \draw[->] (0.3,-0.3) -- (-0.3,0.3);
        \opendot{-0.18,-0.18};
    \end{tikzpicture}
    =
    \begin{tikzpicture}[centerzero]
        \draw[->] (0.3,-0.3) -- (-0.3,0.3);
        \draw[wipe] (-0.3,-0.3) -- (0.3,0.3);
        \draw[->] (-0.3,-0.3) -- (0.3,0.3);
        \opendot{0.15,0.15};
    \end{tikzpicture}
    \ ,\qquad
    \begin{tikzpicture}[centerzero]
        \draw[->] (-0.2,-0.2) -- (-0.2,0) arc(180:0:0.2) -- (0.2,-0.2);
        \opendot{-0.2,0};
    \end{tikzpicture}
    =
    \begin{tikzpicture}[centerzero]
        \draw[->] (-0.2,-0.2) -- (-0.2,0) arc(180:0:0.2) -- (0.2,-0.2);
        \opendot{0.2,0};
    \end{tikzpicture}
    \ ,\qquad
    \begin{tikzpicture}[centerzero]
        \draw[<-] (-0.2,-0.2) -- (-0.2,0) arc(180:0:0.2) -- (0.2,-0.2);
        \opendot{-0.2,0};
    \end{tikzpicture}
    =
    \begin{tikzpicture}[centerzero]
        \draw[<-] (-0.2,-0.2) -- (-0.2,0) arc(180:0:0.2) -- (0.2,-0.2);
        \opendot{0.2,0};
    \end{tikzpicture}
    \ ,\qquad
    \dotup \text{ is invertible}.
\end{gather}
We will also use oriented versions of \cref{R0}, \cref{R1}, \cref{FR1}, \cref{R2}, and \cref{R3}, referring to them by the same labels.  We will interpret the above both as relations in strict monoidal categories described by generators and relations, and also as relations on ($\kk$-linearizations of) tangle categories over the disc or annulus.  When viewing them as relations on tangle categories, they are local relations drawn using the conventions outlined earlier in this section.  The relations \cref{KS+}, \cref{KS-}, \cref{KB}, and \cref{CS} are called the \emph{Kauffman skein relation}, \emph{Dubrovnik skein relation} (or \emph{Kauffman skein relation in its Dubrovnik form}), \emph{Kauffman bracket skein relation} and \emph{Conway skein relation}, respectively.

We now recall some other relations that follow from various combinations of the above.  First, note that \cref{R0} implies the ``windmill relation'':
\begin{equation} \tag{W} \label{W}
    \cref{R0}
    \implies
    \begin{tikzpicture}[centerzero]
        \draw (-0.2,-0.2) -- (0.2,0.2);
        \draw[wipe] (0.2,-0.2) --  (-0.2,0.2);
        \draw (0.2,-0.2) --  (-0.2,0.2);
    \end{tikzpicture}
    =
    \begin{tikzpicture}[centerzero]
        \draw (0.4,0.3) -- (0.4,0.1) to[out=down,in=right] (0.2,-0.2) to[out=left,in=right] (-0.2,0.2) to[out=left,in=up] (-0.4,-0.1) -- (-0.4,-0.3);
        \draw[wipe] (-0.2,-0.3) \braidup (0.2,0.3);
        \draw (-0.2,-0.3) \braidup (0.2,0.3);
    \end{tikzpicture}
    \ ,\quad
    \begin{tikzpicture}[centerzero]
        \draw (0.2,-0.2) --  (-0.2,0.2);
        \draw[wipe] (-0.2,-0.2) -- (0.2,0.2);
        \draw (-0.2,-0.2) -- (0.2,0.2);
    \end{tikzpicture}
    =
    \begin{tikzpicture}[centerzero]
        \draw (-0.2,-0.3) \braidup (0.2,0.3);
        \draw[wipe] (0.4,0.3) -- (0.4,0.1) to[out=down,in=right] (0.2,-0.2) to[out=left,in=right] (-0.2,0.2) to[out=left,in=up] (-0.4,-0.1) -- (-0.4,-0.3);
        \draw (0.4,0.3) -- (0.4,0.1) to[out=down,in=right] (0.2,-0.2) to[out=left,in=right] (-0.2,0.2) to[out=left,in=up] (-0.4,-0.1) -- (-0.4,-0.3);
    \end{tikzpicture}
    \ .
\end{equation}

Also, it is a straightforward exercise to see that
\begin{equation} \label{spiral}
    \cref{R0},\cref{R2},\cref{R3}
    \implies
    \begin{tikzpicture}[centerzero]
    	\draw (0,0.6) to (0,0.3);
    	\draw (-0.3,-0.2) to[out=180,in=-90](-0.5,0);
    	\draw (-0.5,0) to[out=90,in=180] (-0.3,0.2);
    	\draw (0,-0.3) to (0,-0.6);
    	\draw (0,0.3) to[out=-90,in=0] (-0.3,-0.2);
    	\draw[wipe] (-0.3,.2) to[out=0,in=90] (0,-0.3);
    	\draw (-0.3,.2) to[out=0,in=90] (0,-0.3);
    \end{tikzpicture}
    \ =\
    \left(
        \begin{tikzpicture}[centerzero]
        	\draw (0,0.6) to (0,0.3);
        	\draw (0.3,-0.2) to [out=0,in=-90](.5,0);
        	\draw (0.5,0) to [out=90,in=0](0.3,0.2);
        	\draw (0,-0.3) to (0,-0.6);
        	\draw (0,0.3) to [out=-90,in=180] (0.3,-0.2);
        	\draw[wipe] (0.3,.2) to [out=180,in=90](0,-0.3);
        	\draw (0.3,.2) to [out=180,in=90](0,-0.3);
        \end{tikzpicture}
    \right)^{-1},
    \quad
    \begin{tikzpicture}[centerzero]
    	\draw (0,0.6) to (0,0.3);
    	\draw (-0.3,-0.2) to[out=180,in=-90](-0.5,0);
    	\draw (-0.5,0) to[out=90,in=180] (-0.3,0.2);
    	\draw (0,-0.3) to (0,-0.6);
    	\draw (-0.3,.2) to[out=0,in=90] (0,-0.3);
    	\draw[wipe] (0,0.3) to[out=-90,in=0] (-0.3,-0.2);
    	\draw (0,0.3) to[out=-90,in=0] (-0.3,-0.2);
    \end{tikzpicture}
    \ =\
    \left(
        \begin{tikzpicture}[centerzero]
        	\draw (0,0.6) to (0,0.3);
        	\draw (0.3,-0.2) to [out=0,in=-90](.5,0);
        	\draw (0.5,0) to [out=90,in=0](0.3,0.2);
        	\draw (0,-0.3) to (0,-0.6);
        	\draw (0.3,.2) to [out=180,in=90](0,-0.3);
        	\draw[wipe] (0,0.3) to [out=-90,in=180] (0.3,-0.2);
        	\draw (0,0.3) to [out=-90,in=180] (0.3,-0.2);
        \end{tikzpicture}
    \right)^{-1},
\end{equation}
and this implication also holds for the strands oriented in either direction.  Thus, if \cref{R0}, \cref{R2}, and \cref{R3} hold, then relations \cref{FR1} \and \cref{T} suffice to straighten all twists, in both the oriented and unoriented settings.

Next we consider bubbles.  Relations \cref{R0} and \cref{R2} imply that the bubble is strictly central:
\[
    \cref{R0} + \cref{R2}
    \implies
    \begin{tikzpicture}[anchorbase]
        \bubun{-0.4,0};
        \draw (0,-0.3) -- (0,0.3);
    \end{tikzpicture}
    =
    \begin{tikzpicture}[anchorbase]
        \bubun{0.4,0};
        \draw (0,-0.3) -- (0,0.3);
    \end{tikzpicture}
\]
(The same implication holds in the oriented case.)  Hence it is natural to impose the dimension relation \cref{D}.  In fact, adjoining an indeterminate $\delta$ to the coefficient ring and then imposing \cref{D} simply corresponds to viewing the bubble as an element of the coefficient ring.  In the oriented case, \cref{FR1,T} imply that the clockwise and counterclockwise bubbles are equal:
\begin{equation} \label{hoppy}
    \cref{FR1} + \cref{T}
    \implies
    \begin{tikzpicture}[centerzero]
        \bubright{0,0};
    \end{tikzpicture}
    = t^{-1}\
    \begin{tikzpicture}[centerzero]
        \draw (0.5,0) arc(360:270:0.2) to[out=left,in=right] (-0.3,0.2) arc(90:270:0.2);
        \draw[wipe] (-0.3,-0.2) to[out=right,in=left] (0.3,0.2);
        \draw[->] (-0.3,-0.2) to[out=right,in=left] (0.3,0.2) arc(90:0:0.2);
    \end{tikzpicture}
    =
    \begin{tikzpicture}[centerzero]
        \bubleft{0,0};
    \end{tikzpicture}
    \ .
\end{equation}
Thus, in the presence of \cref{FR1,T}, imposing \cref{D} for the clockwise bubble automatically means \cref{D} is also satisfied for the counterclockwise bubble.

\section{Towers of algebras\label{sec:towers}}

Many families of algebras appearing in representation theory can be combined into a \emph{tower}.  We discuss some of these families here, using the language of strict monoidal categories.  We assume that all categories are linear over a commutative ground ring $\kk$.

Suppose $\cC$ is a strict monoidal category such that
\begin{itemize}
    \item the objects of $\cC$ are generated by a single object $X$, and
    \item we have $\Hom_\cC(X^{\otimes n}, X^{\otimes m}) = 0$ when $m \ne n$.
\end{itemize}
For each $n \in \N$, we have the endomorphism algebra $\cC(n) := \End_\cC(X^{\otimes n})$.  The collection $\cC(n)$, $n \in \N$, is sometimes called a \emph{tower of algebras} in the literature.

If $\cC$ is balanced, then, under the functor \cref{flatten}, the elements $1_X^{\otimes (n-i)} \otimes \xi_X \otimes 1_X^{\otimes (i-1)}$, $1 \le i \le n$, are mapped to
\begin{equation} \label{JMdef}
    J_{i,n} := 1_X^{\otimes (n-i)} \otimes \left( \beta_{X^{\otimes (i-1)},X} \circ (1_X^{\otimes (i-1)} \otimes \theta_X) \circ \beta_{X,X^{\otimes (i-1)}} \right) \in \cC(n).
\end{equation}
For fixed $n$, the elements $J_{1,n}, \dotsc, J_{n,n}$ are pairwise commuting elements of $\cC(n)$.  We call them the \emph{Jucys--Murphy elements} of $\cC(n)$ since, in our examples below, they will correspond to classical Jucys--Murphy elements.

In fact, in many of the examples in this section, we have $\theta_X = 1_X$.  Then the twist on arbitrary objects $X^{\otimes n}$ is determined recursively by \cref{swirl}.  In this case, we have \begin{equation} \label{JMeasy}
    J_{i,n} = 1_X^{\otimes (n-i)} \otimes \left( \beta_{X^{\otimes (i-1)},X} \circ \beta_{X,X^{\otimes (i-1)}} \right).
\end{equation}

\subsection{Category of braids\label{subsec:braids}}

The category $\Braid(D)$ of braids over the disc is isomorphic to the strict monoidal category generated by a single object $\uparrow$, and morphisms
\[
    \posupcross,\ \negupcross,
\]
subject to the relations \cref{R2} and \cref{R3}, with the strands oriented upwards.  (The orientation of the strands does not play an important role here.)  We will identify these two categories.  It follows that $\Braid(D)$ is the free braided monoidal category generated by a single object; see, for example, \cite[Th.~5.6]{Yet01}.  It is balanced, with twist determined by $\theta_\uparrow = 1_\uparrow$.  The endomorphism algebra $\End_{\Braid(D)}(\uparrow^{\otimes n})$ is the group algebra of the braid group of type $A_{n-1}$ and the elements $J_{i,n}$ of \cref{JMeasy} are the usual Jucys--Murphy elements.

\begin{prop} \label{annularbraids}
    The affinization $\Aff(\Braid(D))$ of the category of braids over the disc is isomorphic to the category $\Braid(A)$ of braids over the annulus.
\end{prop}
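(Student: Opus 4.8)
The statement to prove is that $\Aff(\Braid(D)) \cong \Braid(A)$, where $\Braid(D)$ is presented as the strict monoidal category generated by one object $\uparrow$ and the two crossing morphisms subject to \cref{R2} and \cref{R3}. My plan is to use \cref{plain}: since $\Braid(D)$ is a braided strict monoidal category (indeed the free one on one object), $\Aff(\Braid(D))$ is isomorphic to the category obtained from $\Braid(D)$ by adjoining invertible dot morphisms $\xi_{\uparrow^{\otimes n}}$ for each $n$, subject to the first, third, and fourth relations in \cref{jaguar+}. Because the objects are freely generated by $\uparrow$, by the third relation in \cref{jaguar+} it suffices to adjoin a single generator $\xi := \xi_\uparrow \colon \uparrow \to \uparrow$, with the dot on $\uparrow^{\otimes n}$ being the product of $n$ single dots, one on each strand. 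As explained in the \cref{rem} following \cref{wolverine}, when the only generating morphisms are braidings the fourth relation in \cref{jaguar+} is automatically satisfied, so the only surviving relation is the first relation in \cref{jaguar+}, i.e. the single dot slides through crossings in the manner depicted there (equivalently \cref{UA}'s first relation, oriented upward). So $\Aff(\Braid(D))$ is isomorphic to $\Braid(D)$ with one extra invertible generator $\xi$ that slides over crossings.

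**Identifying this with $\Braid(A)$.**
Next I would give the isomorphism with $\Braid(A)$. On objects both categories have $\Ob = \N$ (sequences in $\{\uparrow\}$). I would define a functor $\Phi \colon \Aff(\Braid(D)) \to \Braid(A)$ sending the crossing generators of $\Braid(D) \subseteq \Aff(\Braid(D))$ to the corresponding crossings in the annulus (drawn in the cut-open picture, away from the cut), and sending $\xi_\uparrow$ to the braid of one strand that wraps once around the core of the annulus — i.e. the generator depicted in \cref{pipe}/\cref{wrap} with a single strand. To see $\Phi$ is well defined I need to check it respects \cref{R2}, \cref{R3} (clear, since these are local braid relations that hold in $\Braid(A)$), that $\xi_\uparrow$ maps to an invertible element (clear: the inverse wrap), and the first relation in \cref{jaguar+}, which geometrically says the wrapping strand can be isotoped past a crossing around the annulus — which is manifestly true for annular braids. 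For the inverse, I would use the standard presentation of the affine braid group: $\End_{\Braid(A)}(\uparrow^{\otimes n})$ is the affine braid group of type $A$, generated by the braid generators $\sigma_1, \dots, \sigma_{n-1}$ together with the Dehn-twist/wrap generator $\tau$, subject to the braid relations among the $\sigma_i$, the relation $\tau \sigma_i \tau^{-1} = \sigma_{i+1}$ for $1 \le i \le n-2$ (or the cyclic version with $\sigma_0$), and commutativity relations; moreover $\Braid(A)$ as a monoidal category is generated by these as $n$ varies, with the wrap generators on $\uparrow^{\otimes n}$ obtained from the single wrap via the braiding. Comparing this with the presentation of $\Aff(\Braid(D))$ extracted above — generators = crossings $+$ the single invertible $\xi$, relations = \cref{R2}, \cref{R3} $+$ (first relation in \cref{jaguar+}) — I would exhibit a functor $\Psi \colon \Braid(A) \to \Aff(\Braid(D))$ on generators and check it is well defined by matching each annular-braid relation to a consequence of the $\Aff$-relations (the conjugation relation $\tau \sigma_i \tau^{-1} = \sigma_{i+1}$ is precisely what the dot-slides-through-crossing relation gives after one writes $\xi_{\uparrow^{\otimes n}}$ in terms of single dots using the third relation in \cref{jaguar+}).

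**Conclusion and main obstacle.**
Finally I would check $\Phi$ and $\Psi$ are mutually inverse; this is immediate on generators, hence on all morphisms. I expect the main obstacle to be the bookkeeping in matching the two presentations precisely: one must be careful about conventions (which way strands wrap, over vs. under, the role of the braiding in expressing the multi-strand wrap $\xi_{\uparrow^{\otimes n}}$, and the distinction between the ``type $A$ affine'' presentation with $\tau$ versus the ``extended affine'' presentation with $\sigma_0$), and one must confirm that the geometric wrap generator in $\Braid(A)$ really is invertible and really does satisfy exactly the slide relation, with no hidden extra relations. A clean way to sidestep half of this bookkeeping is to invoke the known fact (e.g. from \cite{tHdlH} or Crisp's work, or folklore) that $\Braid(A)$ is the free braided monoidal category on one object \emph{with a chosen invertible endomorphism of the generating object that slides over crossings} — but since the paper develops everything from \cref{plain}, I would instead prove it directly as above, as this also serves as the template for the many tangle-category examples that follow (\cref{crocodile}, \cref{alligator}, etc.).
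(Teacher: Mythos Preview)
Your approach is correct but takes a genuinely different route from the paper. The paper does \emph{not} pass through \cref{plain}; instead it works directly with the original coil generators $\xi_{X,Y}$ of \cref{affdef}. Concretely, the paper defines $F \colon \Aff(\Braid(D)) \to \Braid(A)$ by sending $\xi_\uparrow$ to the one-strand wrap (as you do), and then constructs the inverse $G$ \emph{geometrically}: given an annular braid, cut the cylinder, isotope so that the braid meets the cut transversely, and factor it as a composite of pieces disjoint from the cut (sent to themselves in $\Braid(D)$) and the basic ``one strand crosses the cut'' pieces (sent to $\xi_{\uparrow^{\otimes n},\uparrow}^{\pm 1}$). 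Well-definedness of $G$ follows from \cref{coilrel2}, and mutual inverseness is immediate on generators. No presentation of the annular braid groups is invoked.

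Your route, by contrast, first reduces $\Aff(\Braid(D))$ to the dot presentation via \cref{plain} and then matches it against a known algebraic presentation of the extended affine braid group. This is perfectly valid, but it imports an external fact (the $\sigma_i,\tau$ presentation of the cylindrical braid group) and, as you yourself note, requires care with conventions and with lifting the groupwise presentations to a presentation of $\Braid(A)$ as a \emph{monoidal} category (you must also check that $\Phi$ respects the nesting tensor product of \cref{afftensor} versus the geometric nesting on $\Braid(A)$). The paper's geometric argument avoids all of this bookkeeping, is self-contained, and---crucially---provides the template that is then reused verbatim for the tangle and skein examples (\cref{crocodile,alligator,kiwi,dodo,dragon,dinosaur}), where no off-the-shelf algebraic presentation of the annular version is available. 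In the paper's logic, the dot presentation of $\Braid(A)$ is a \emph{corollary} of \cref{annularbraids} combined with \cref{plain}, rather than an input to its proof.
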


\begin{proof}
    Consider the functor $F \colon \Aff(\Braid(D)) \to \Braid(A)$ that is the identity on objects, and is given on morphisms as follows.  Given a braid $f \in \Braid(D)$, we can naturally view it as a braid over $A$ via \cref{lasso}.  We define
    \[
        F(\xi_\uparrow) =
        \begin{tikzpicture}[centerzero]
            \draw[->] (0,-0.3) to[out=up,in=west] (0.3,0);
            \draw[->] (-0.3,0) to[out=east,in=south] (0,0.3);
            \identify{-0.3}{-0.3}{0.3}{0.3};
        \end{tikzpicture}
        \qquad \text{and} \qquad
        F(\xi_\uparrow^{-1}) =
        \begin{tikzpicture}[centerzero]
            \draw[->] (0,-0.3) to[out=up,in=east] (-0.3,0);
            \draw[->] (0.3,0) to[out=west,in=south] (0,0.3);
            \identify{-0.3}{-0.3}{0.3}{0.3};
        \end{tikzpicture}
        \ ,
    \]
    where we view these as braids over $A$ by identifying the vertical edges, as in \cref{pizza}.  It is straightforward to verify that $F$ respects the relations \cref{coilrel1,coilrel2} and that it is a monoidal functor.

    Now consider the functor $G \colon \Braid(A) \to \Aff(\Braid(D))$ that is the identity on objects, and is given on morphisms as follows.  Given a braid $f$ over the annulus, we cut the annulus as explained in \cref{sec:tangles} to obtain a tangle drawn in a rectangle with vertical edges identified.  After isotoping if necessary, this diagram is a composite of diagrams not intersecting the vertical edges and diagrams of the form
    \begin{equation} \label{spirit}
        \begin{tikzpicture}[centerzero]
            \draw (-0.4,-0.4) \braidup (-0.2,0.4);
            \draw (-0.2,-0.4) \braidup (0,0.4);
            \draw (0,-0.4) \braidup (0.2,0.4);
            \draw (0.2,-0.4) \braidup (0.4,0.4);
            \draw[->] (0.4,-0.4) to[out=up,in=225] (0.6,0);
            \draw[->] (-0.6,0) to[out=45,in=down] (-0.4,0.4);
            \identify{-0.6}{-0.4}{0.6}{0.4};
        \end{tikzpicture}
        \qquad \text{and} \qquad
        \begin{tikzpicture}[centerzero]
            \draw (-0.2,-0.4) \braidup (-0.4,0.4);
            \draw (0,-0.4) \braidup (-0.2,0.4);
            \draw (0.2,-0.4) \braidup (0,0.4);
            \draw (0.4,-0.4) \braidup (0.2,0.4);
            \draw[->] (-0.4,-0.4) to[out=up,in=-45] (-0.6,0);
            \draw[->] (0.6,0) to[out=135,in=down] (0.4,0.4);
            \identify{-0.6}{-0.4}{0.6}{0.4};
        \end{tikzpicture}
        \ ,
    \end{equation}
    where there can be an arbitrary number of strands (including zero) in the middle (i.e.\ not intersecting the vertical edges of the rectangle).  We then define $G$ on such a morphism by declaring that it sends tangles not intersecting the vertical edges to the same tangles, naturally interpreted as tangles over the disc, and the tangles \cref{spirit} to $\xi_{\uparrow^{\otimes n},\uparrow}$ and $\xi_{\uparrow^{\otimes n},\uparrow}^{-1}$, respectively, where $n$ is the number of strands in the middle of the diagrams (not intersecting the vertical edges).  The relations \cref{coilrel2} ensure that $G$ is well defined, and it is straightforward to verify that $F$ and $G$ are mutually inverse.
\end{proof}

\begin{cor}
    The category $\Braid(A)$ of braids over the annulus is isomorphic to the strict monoidal category generated by a single object $\uparrow$, morphisms $\posupcross$, $\negupcross$, and an invertible morphism $\dotup$, subject to the relations \cref{R2}, \cref{R3}, and the first relation in \cref{OA}.
\end{cor}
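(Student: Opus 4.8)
The plan is to deduce this presentation from \cref{annularbraids} together with \cref{plain} and the presentation of $\Braid(D)$ recalled in \cref{subsec:braids}. By \cref{annularbraids} we have $\Braid(A) \cong \Aff(\Braid(D))$, so it suffices to show that $\Aff(\Braid(D))$ admits the stated presentation. Recall that $\Braid(D)$ is the (free braided) strict monoidal category generated by the single object $\uparrow$ and the morphisms $\posupcross$, $\negupcross$, subject to \cref{R2} and \cref{R3}. Since $\Braid(D)$ is braided, \cref{plain} applies: $\Aff(\Braid(D))$ is isomorphic to the strict monoidal category obtained from $\Braid(D)$ by adjoining invertible morphisms $\xi_X \colon X \to X$ for every object $X$, subject to the first, third, and fourth relations in \cref{jaguar+}.

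Next I would invoke the bookkeeping reduction explained in the remark following \cref{rainbow}. Because the objects of $\Braid(D)$ are freely generated by $\uparrow$, it is enough to adjoin the single invertible generator $\xi_\uparrow = \dotup$, to define $\xi_{\uparrow^{\otimes n}}$ on the remaining objects by iterating the third relation in \cref{jaguar+} (this is a definition, not a new constraint), to impose the first relation in \cref{jaguar+} (after which the second relation follows, as noted in the proof of \cref{plain}), and to impose the fourth relation in \cref{jaguar+} only with the coupon ranging over a generating set of morphisms of $\Braid(D)$.

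The key observation is that the only generating morphisms of $\Braid(D)$ are the braidings $\posupcross$ and $\negupcross$, and $\Braid(D)$ has no cups or caps. As computed in that same remark, when the coupon is a braiding the fourth relation in \cref{jaguar+} follows automatically from the first two relations; and since there are no cups or caps, none of the cap-slide relations of \cref{porcupine} or the second relation in \cref{OA} are forced. Moreover every strand of $\Braid(D)$ is oriented upward, so the open dot of \cref{OA} coincides with the positive dot $\dotup$, and the first relation in \cref{jaguar+} is literally the first relation in \cref{OA}. Assembling these observations, $\Aff(\Braid(D))$ is presented by $\uparrow$, $\posupcross$, $\negupcross$, and the invertible morphism $\dotup$, subject to \cref{R2}, \cref{R3}, and the first relation in \cref{OA}, as claimed. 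The only point requiring care is confirming that the third and fourth relations in \cref{jaguar+} contribute nothing beyond this — the third being purely definitional on $\uparrow^{\otimes n}$ and the fourth collapsing for braiding coupons — so the proof amounts to citing \cref{plain} and the post-\cref{rainbow} remark in the special case at hand.
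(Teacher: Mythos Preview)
Your argument is correct and is exactly the route the paper intends: the corollary has no explicit proof in the paper precisely because it follows by combining \cref{annularbraids} with \cref{plain} and the bookkeeping reduction in the remark after \cref{rainbow}, which you have carried out accurately. In particular, your observations that the third relation in \cref{jaguar+} is purely definitional on tensor powers of $\uparrow$, that the fourth relation collapses for braiding coupons, and that no cup/cap relations arise, are the essential points.
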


The endomorphism algebra $\End_{\Braid(A)}(\uparrow^{\otimes n})$ is the extended affine braid group of type $A_{n-1}$, which is isomorphic to the braid group of type $B_n$.  We refer the reader to \cite[\S 2]{GL03} for further discussion of viewing this braid group in terms of cylindrical braids.

\subsection{Hecke algebras\label{subsec:Hecke}}

Let $\cH(D)$ be the strict $\kk$-linear monoidal category obtained from $\Braid(D)_\kk$ by imposing the Conway skein relation \cref{CS}.  Then the endomorphism algebra $\End_{\cH(D)}(\uparrow^{\otimes n})$ is the Iwahori--Hecke algebra of type $A_{n-1}$.  (One often sees the definition with $z = q-q^{-1}$ for some $q \in \kk^\times$.)

Applying our affinization procedure, we obtain the category $\Aff(\cH(D))$.  By \cref{plain}, this is the strict monoidal category generated by a single object $\uparrow$, and morphisms
\[
    \posupcross,\quad \negupcross,\quad \dotup,
\]
subject to the relations \cref{R2} and \cref{R2} with the strands oriented upwards, the relation \cref{CS}, and
\[
    \begin{tikzpicture}[centerzero]
        \draw[->] (-0.3,-0.3) -- (0.3,0.3);
        \draw[wipe] (0.3,-0.3) -- (-0.3,0.3);
        \draw[->] (0.3,-0.3) -- (-0.3,0.3);
        \opendot{-0.18,-0.18};
    \end{tikzpicture}
    =
    \begin{tikzpicture}[centerzero]
        \draw[->] (0.3,-0.3) -- (-0.3,0.3);
        \draw[wipe] (-0.3,-0.3) -- (0.3,0.3);
        \draw[->] (-0.3,-0.3) -- (0.3,0.3);
        \opendot{0.15,0.15};
    \end{tikzpicture}
    \ ,\qquad
    \dotup \text{ is invertible}.
\]
The endomorphism algebra $\End_{\Aff(\cH(D))}(\uparrow^{\otimes n})$ is the affine Hecke algebra of type $A_{n-1}$.  The category $\cH(D)$ is balanced, with twist determined by $\theta_\uparrow = 1_\uparrow$.  The elements $J_{i,n}$ of \cref{JMeasy} are the usual Jucys--Murphy elements.  An interpretation of affine Hecke algebras in terms of the cylinder was given in \cite{GL03}.

The above discussion can be generalized to the setting of \emph{quantum wreath product algebras}, which were introduced in \cite[Def.~2.1]{RS20} and depend on a Frobenius algebra $A$.  They are the endomorphism algebras of the \emph{quantum wreath product category} defined in \cite[\S2]{BSW-qFrobHeis}.  Affinization of this category yields the \emph{quantum affine wreath product category} defined there, whose endomorphism algebras are the \emph{quantum affine wreath product algebras} defined in \cite[Def.~2.5]{RS20}.  When $A=\kk$, this recovers the case of (affine) Hecke algebras described above.  When $A$ is the group algebra of a finite cyclic group, it corresponds to the (affine) Yokonuma--Hecke algebras.

\subsection{Symmetric groups}

Let $\Sym$ be the free $\kk$-linear symmetric monoidal category on a single object.  Thus $\Sym$ is the strict $\kk$-linear monoidal category generated by a single object $\uparrow$, and morphism $\symcross$, subject to the relations
\[
    \begin{tikzpicture}[centerzero]
        \draw[->] (0.25,-0.5) to[out=135,in=down] (-0.2,0) to[out=up,in=225] (0.25,0.5);
        \draw[->] (-0.25,-0.5) to[out=45,in=down] (0.2,0) to[out=up,in=-45] (-0.25,0.5);
    \end{tikzpicture}
    \ =\
    \begin{tikzpicture}[centerzero]
        \draw[->] (-0.2,-0.5) -- (-0.2,0.5);
        \draw[->] (0.2,-0.5) -- (0.2,0.5);
    \end{tikzpicture}
    \ =\
    \begin{tikzpicture}[centerzero]
        \draw[->] (-0.25,-0.5) to[out=45,in=down] (0.2,0) to[out=up,in=-45] (-0.25,0.5);
        \draw[->] (0.25,-0.5) to[out=135,in=down] (-0.2,0) to[out=up,in=225] (0.25,0.5);
    \end{tikzpicture}
    \quad \text{and} \quad
    \begin{tikzpicture}[centerzero]
        \draw[->] (0.5,-0.5) -- (-0.5,0.5);
        \draw[->] (0,-0.5) to[out=135,in=down] (-0.4,0) to[out=up,in=225] (0,0.5);
        \draw[->] (-0.5,-0.5) -- (0.5,0.5);
    \end{tikzpicture}
    \ =\
    \begin{tikzpicture}[centerzero]
        \draw[->] (0.5,-0.5) -- (-0.5,0.5);
        \draw[->] (0,-0.5) to[out=45,in=down] (0.4,0) to[out=up,in=-45] (0,0.5);
        \draw[->] (-0.5,-0.5) -- (0.5,0.5);
    \end{tikzpicture}
    \ .
\]
Then $\End_{\Sym}(\uparrow^{\otimes n})$ is the group algebra of the symmetric group $\fS_n$.

Since $\Sym$ is a braided monoidal category, we can apply our affinization procedure.  Is it easy to see, using \cref{plain}, that $\End_{\Aff(\Sym)}(\uparrow^{\otimes n})$ is isomorphic to the wreath product algebra $\kk[x_1^{\pm 1},\dotsc,x_n^{\pm n}] \rtimes \fS_n$, where $x_i$ corresponds to a positive dot on the $i$-th strand, and $\fS_n$ acts on $\kk[x_1^{\pm 1},\dotsc,x_n^{\pm 1}]$ by permutation of the $x_i$.

Note that the endomorphism algebras of $\Aff(\Sym)$ are \emph{not} degenerate affine Hecke algebras.  To obtain the latter, one needs to instead consider a $q \to 1$ degeneration of $\Aff(\cH(D))$ from \cref{subsec:Hecke}.

\section{Oriented examples\label{sec:oriented}}

In this section and the next, we give a number of examples of the affinization of monoidal categories coming from the theory of tangles and skein theory.  In each example, we see that affinization of such a category over the disc yields the corresponding category over the annulus.  Furthermore, \cref{plain} gives us presentations of these annular categories involving dot generators.

\subsection{Oriented tangles}

The category $\OT(D)$ of oriented tangles over the disc is isomorphic to the strict monoidal category generated by objects $\uparrow$, $\downarrow$, and morphisms
\[
    \posupcross,\ \negupcross,\ \posrightcross,\ \negrightcross,\ \posdowncross,\ \negdowncross,\ \posleftcross,\ \negleftcross,\ \rightcup,\ \leftcup,\ \rightcap,\ \rightcup,
\]
subject to the relations \cref{R0}, \cref{R1}, \cref{R2}, and \cref{R3} for all orientations of the strands.  This category is a strict ribbon category.

\begin{rem} \label{efficient}
    There exist more efficient presentations.  For instance, using \cref{W}, it is enough to include the upward crossings as generators.  One can then use \cref{W} to \emph{define} the other crossings.  Also, for example, \cref{R1} and \cref{R2} for downward oriented strands follow from the upward oriented strand analogues, together with \cref{R0}.  See \cite[Th.~3.2]{Tur89}, \cite[Th.~3.5]{FY89}, or \cite[Th.~XII.2.2]{Kas95} for details.  We choose to include all the crossings as generators to emphasize the structure as a braided monoidal category.
\end{rem}

\begin{prop} \label{crocodile}
    The affinization $\Aff(\OT(D))$ of the category of oriented tangles over the disc is isomorphic to the category $\OT(A)$ of oriented tangles over the annulus.
\end{prop}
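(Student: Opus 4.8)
The plan is to imitate the proof of \cref{annularbraids}, building mutually inverse functors $F \colon \Aff(\OT(D)) \to \OT(A)$ and $G \colon \OT(A) \to \Aff(\OT(D))$. For $F$, I would send every object to itself and every morphism of $\OT(D)$ to the corresponding tangle in $\OT(A)$ via the identification \cref{lasso}, and I would send $\xi_{X,Y}$ to the annular tangle in which the strands labelled $Y$ (in order) wrap once around the annulus, passing over the strands labelled $X$ — drawn in the cut-open picture exactly as the coil pictures after \cref{pipe}, but now with oriented strands. To see that $F$ is well defined I must check that these assignments respect the defining relations \cref{coilrel1,coilrel2}: relation \cref{coilrel1} corresponds to the obvious isotopy merging a $Y$-wrap and a $Z$-wrap into a single $(Y\otimes Z)$-wrap, and \cref{coilrel2} is the isotopy sliding a coupon (a morphism of $\OT(D)$) around the annulus. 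Functoriality with respect to composition is clear from the definition, and monoidality (with respect to the annular tensor product of \cref{sec:tangles}) follows because the nesting description of $\otimes$ in $\OT(A)$ matches \cref{stack} once one uses the braiding of $\OT(D)$, i.e. \cref{afftensor}.

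For $G$, given a tangle $f$ over the annulus, isotope it so it meets the cut $\{0\}\times(0,1)\times[0,1]$ transversely, and slice it into a vertical composite of elementary pieces, each of which either does not meet the cut — so is a tangle over $D$ — or is an elementary "wrap" of a single strand across the cut, over or under all the other strands present at that level (as in the diagrams \cref{spirit} for braids). Send a piece not meeting the cut to the same tangle viewed in $\OT(D)$, and send a wrap with $n$ strands in the middle to $\xi_{Z,\uparrow}^{\pm 1}$ or $\xi_{Z,\downarrow}^{\pm 1}$, where $Z$ is the object given by the $n$ middle strands and the $\pm$ and the choice of $\uparrow$/$\downarrow$ record the orientation and the over/under type of the wrapped strand. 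To see $G$ is well defined I must show the result is independent of the chosen decomposition; by Reidemeister-type moves for tangles in the cylinder (the usual planar isotopies plus moves pushing a crossing or a cup/cap through the cut) this reduces to: (i) a crossing not meeting the cut commuting past a wrap on a disjoint strand, which is handled by \cref{coilrel2}; (ii) a crossing meeting the cut, which is absorbed into the wrap description and handled by the second relation in \cref{jaguar+} (equivalently, the naturality relations \cref{lynx}); (iii) a cup or cap pushed across the cut, which is exactly the fourth relation in \cref{jaguar+} together with \cref{porcupine} (cf. the analysis in the proof of \cref{rainbow}); and (iv) merging two adjacent wraps of a strand through the cut, which is \cref{coilrel1} (and \cref{cable} for the trivial wrap).

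Finally I would check $F$ and $G$ are mutually inverse. The composite $F\circ G$ is the identity on $\OT(A)$: on pieces not meeting the cut it is visibly the identity, and on a wrap it sends $\xi_{Z,\uparrow}^{\pm 1}$ (resp. $\xi_{Z,\downarrow}^{\pm1}$) back to the annular wrap it came from, by the very definition of $F$ on coils. The composite $G\circ F$ is the identity on $\Aff(\OT(D))$: it fixes $\OT(D)$, and on a coil $\xi_{X,Y}$ it first draws the annular wrap and then slices it back up, recovering $\xi_{X,Y}$ after using \cref{coilrel1} to collect the slices of the $Y$-strand into a single coil — this is the analogue of the computation \cref{bootstrap}. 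I expect the main obstacle to be item (iii) in the well-definedness of $G$: carefully justifying that pushing a cup or cap through the cut introduces exactly the dot-slide relation and no spurious twist. This is precisely the point where rigidity of $\OT(D)$ is used, and it is handled by the argument already given in the proof of \cref{rainbow} (equivalently, by \cref{rainbow} itself, which tells us $\Aff(\OT(D))$ is strict pivotal with the expected cup/cap behaviour), so the argument goes through. The remaining verifications are the routine, if somewhat tedious, diagrammatic checks that mirror the braid case of \cref{annularbraids}.
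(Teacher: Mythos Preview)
Your proposal is correct and follows exactly the approach the paper takes: the paper's own proof is simply ``The proof is analogous to that of \cref{annularbraids},'' and you have faithfully carried out that analogy, correctly identifying the one new ingredient (cups and caps crossing the cut) and handling it via the argument from \cref{rainbow}.
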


\begin{proof}
    The proof is analogous to that of \cref{annularbraids}.
\end{proof}

\begin{cor}
    The category $\OT(A)$ of oriented tangles over the annulus is isomorphic to the strict monoidal category generated by objects $\uparrow$, $\downarrow$, and morphisms
    \[
        \posupcross,\ \negupcross,\ \posrightcross,\ \negrightcross,\ \posdowncross,\ \negdowncross,\ \posleftcross,\ \negleftcross,\ \rightcup,\ \leftcup,\ \rightcap,\ \rightcup,\ \dotup,\ \dotdown,
    \]
    subject to relations \cref{R0}, \cref{R1}, \cref{R2}, \cref{R3}, and \cref{OA}.
\end{cor}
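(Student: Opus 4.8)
The plan is to combine the two results that immediately precede this corollary. By \cref{crocodile}, we have an isomorphism of strict monoidal categories $\OT(A) \cong \Aff(\OT(D))$. The corollary then amounts to applying \cref{plain} to the braided strict monoidal category $\cC = \OT(D)$: that theorem says $\Aff(\cC)$ is isomorphic to the category obtained from $\cC$ by adjoining invertible morphisms $\xi_X$ for each object $X$, subject to the first, third, and fourth relations in \cref{jaguar+}. Since $\OT(D)$ itself has the generators-and-relations presentation recalled just before \cref{crocodile} (generators the eight crossings and the four cups/caps, relations \cref{R0}, \cref{R1}, \cref{R2}, \cref{R3} in all orientations), it suffices to show that the presentation of $\Aff(\OT(D))$ coming from \cref{plain} can be simplified to the one in the statement, with the only new generators being the dots $\dotup$ on the object $\uparrow$ and $\dotdown$ on the object $\downarrow$, subject to \cref{OA}.

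The key steps are as follows. First I would invoke the remark following \cref{wolverine} (the one beginning ``Many of our categories will have generating morphisms given by braidings...''), which explains precisely this reduction: rather than adjoining $\xi_X$ for every object $X$, it is enough to adjoin $\xi_X$ for $X$ ranging over the generating objects $\uparrow$ and $\downarrow$, with $\xi_X$ on all other objects defined via the third relation in \cref{jaguar+}; one imposes the first relation in \cref{jaguar+} (the second then follows), and then the fourth relation in \cref{jaguar+} only needs to be checked against the generating morphisms of $\cC$. That remark further observes that the fourth relation is automatic when the coupon is a braiding, and that when the coupon is a cup or cap it reduces exactly to the dot-slide relations \cref{porcupine}. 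So for $\OT(D)$, whose generating morphisms are crossings and (oriented, non-self-dual) cups/caps, the fourth relation collapses to the cap-slide relations in \cref{porcupine}, and the cup-slide relations follow from these via \cref{zigright,zigleft}. Thus the only surviving relations are: the first relation in \cref{jaguar+} — which is the first (crossing-slide) relation displayed in \cref{OA} — together with the cap-slide relations, which are the second and third relations in \cref{OA}; and the invertibility of $\dotup$ (equivalently $\dotdown$), which is the last clause of \cref{OA}. Finally I would remark that by \cref{rainbow}, since $\OT(D)$ is a braided strict pivotal (indeed ribbon) category, the open dot on $\downarrow$ equals the negative dot on $\uparrow$ (i.e.\ $\dotdown = (\dotup)^{-1}$ after appropriately bending strands via cups and caps), so one does not even need $\dotdown$ as an independent generator — but since the statement lists both, I would just note the consistency and keep both for symmetry, exactly as the analogous corollary to \cref{annularbraids} keeps all the crossings.

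Concretely, the proof I would write is short: ``This follows by combining \cref{crocodile} with \cref{plain}, applied to the presentation of $\OT(D)$ recalled above. By the discussion following \cref{wolverine}, it suffices to adjoin the invertible morphisms $\dotup$ and $\dotdown$, where $\dotdown$ is determined by $\dotup$ via \cref{dotrot}, subject to the first relation in \cref{jaguar+} and the fourth relation in \cref{jaguar+}. The fourth relation is automatic when the coupon is one of the crossings, and when the coupon is one of the cups or caps it reduces to the relations in \cref{porcupine}; the cup versions follow from the cap versions using \cref{zigright,zigleft}. Together these give exactly the relations \cref{OA} in addition to \cref{R0}, \cref{R1}, \cref{R2}, \cref{R3}.''

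The main obstacle — really the only point requiring any care — is bookkeeping: making sure that the reduction from ``$\xi_X$ for all $X$'' to ``$\dotup, \dotdown$ only'' is justified in the oriented setting with two generating objects rather than one, and that no relation among the $\xi_X$ on composite objects $\uparrow^{\otimes a} \otimes \downarrow^{\otimes b} \otimes \cdots$ has been lost. This is handled by the third relation in \cref{jaguar+} (which defines $\xi$ on tensor products as a product of dots on the factors) together with the first relation (which handles re-ordering of strands past crossings), exactly as spelled out in the remark after \cref{wolverine}; so there is genuinely nothing new to prove beyond citing \cref{crocodile}, \cref{plain}, and that remark.
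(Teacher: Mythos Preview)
Your proposal is correct and follows exactly the approach the paper intends: the paper states this corollary without proof, treating it as an immediate consequence of \cref{crocodile} and \cref{plain} together with the reduction spelled out in the remark after \cref{wolverine}. Your write-up simply makes explicit the bookkeeping the paper leaves implicit, and there is nothing to add.
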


\subsection{Framed oriented tangles}

The category $\FOT(D)$ of framed oriented tangles over the disc is isomorphic to the strict monoidal category generated by objects $\uparrow$, $\downarrow$, and morphisms
\[
    \posupcross,\ \negupcross,\ \posrightcross,\ \negrightcross,\ \posdowncross,\ \negdowncross,\ \posleftcross,\ \negleftcross,\ \rightcup,\ \leftcup,\ \rightcap,\ \rightcup,
\]
subject to the relations \cref{R0}, \cref{FR1}, \cref{R2}, and \cref{R3} for all orientations of the strands.  As in \cref{efficient}, there are actually more efficient presentations, i.e.\ presentations with fewer generators.  See, for example, \cite[Th.~3.5]{FY89}.  The category $\FOT(D)$ is the ribbon category freely generated by a single object; see \cite[Th.~6.4]{Shu94} and \cite[Th.~9.1]{Yet01}.

\begin{prop} \label{alligator}
    The affinization $\Aff(\FOT(D))$ of the category of framed oriented tangles over the disc is isomorphic to the category $\FOT(A)$ of oriented tangles over the annulus.
\end{prop}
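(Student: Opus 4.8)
The plan is to follow the proof of \cref{annularbraids} --- and of its unframed analogue \cref{crocodile} --- essentially verbatim, making the modifications forced by the presence of framing, cups, and caps. Since $\FOT(D)$ is a strict ribbon category it is in particular braided, so \cref{afftensor} applies and $\Aff(\FOT(D))$ is a strict monoidal category; moreover, by \cref{rainbow} it is strict pivotal, hence rigid. I would exhibit mutually inverse monoidal functors
\[
    F\colon \Aff(\FOT(D)) \to \FOT(A), \qquad G\colon \FOT(A) \to \Aff(\FOT(D)),
\]
both the identity on objects (recall $\Ob(\FOT(D)) = \Ob(\Aff(\FOT(D))) = \Ob(\FOT(A))$ is the free monoid on $\{\uparrow,\downarrow\}$).

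First I would define $F$. A framed oriented tangle over $D$ is sent to itself, regarded as a tangle over $A$ via \cref{lasso}, with blackboard framing throughout; this is a monoidal functor $\FOT(D) \to \FOT(A)$. On coils, $F(\xi_{Y,\varepsilon})$ (for $\varepsilon$ a single strand and $Y$ an arbitrary object) is the evident annular tangle in which one $\varepsilon$-oriented, blackboard-framed strand winds once around the core past the $Y$-strands, drawn in the cut-open picture as in \cref{pipe}; these determine $F$ on all coils via \cref{coilrel1}. One checks that $F$ respects \cref{coilrel1,coilrel2} (a routine verification of isotopies on the annulus) and that $F$ is monoidal, the latter matching the description of $\otimes$ on $\FOT(A)$ recalled in \cref{sec:tangles} --- outer strands pass over inner ones --- against \cref{bridge}.

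Next I would define $G$. Given a framed oriented tangle over $A$, cut the annulus along the radial arc as in \cref{sec:tangles}; after an isotopy we may assume the tangle meets the cut transversely and that the resulting rectangular diagram is a vertical composite of (i) diagrams not meeting the cut, which are framed oriented tangles over $D$, and (ii) elementary ``wrapping'' pieces in which one strand of some orientation $\varepsilon$ crosses the cut while an arbitrary configuration of non-crossing strands $Y$ sits in the middle. I send a piece of type (i) to the corresponding morphism of $\FOT(D) \subseteq \Aff(\FOT(D))$, and a piece of type (ii) to $\xi_{Y,\varepsilon}^{\pm 1}$, the sign recording the winding direction. Granting that $G$ is well defined, $F$ and $G$ are then mutually inverse on the nose: $F \circ G$ takes an annular tangle, cuts it up and reassembles the same tangle (hence is the identity), while $G \circ F$ is the identity on $\FOT(D)$ and sends each coil to itself.

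The real work --- and the main obstacle relative to the braid case --- is to show $G$ is well defined, i.e.\ independent of the chosen isotopy and of the decomposition into pieces of types (i) and (ii). This is exactly where the relations of $\Aff(\FOT(D))$ are used: \cref{coilrel2} handles sliding a crossing, cup, or cap of $\FOT(D)$ past a wrapping piece (cf.\ \cref{whip}), and \cref{coilrel1} handles merging or splitting adjacent wrapping pieces. Beyond \cref{annularbraids}, one must now also treat isotopies that drag cups, caps, and closed components --- including a component winding around the core --- across the cut, as well as framing changes; since $\Aff(\FOT(D))$ is rigid (pivotal, even, by \cref{rainbow}), every such move can be written in terms of the given data, and the verification reduces to a finite check over the (framed) Reidemeister and ribbon moves, carried out just as in the disc case (cf.\ \cite[Th.~3.5]{FY89}, \cite[Th.~9.1]{Yet01}), now supplemented by the single extra ``wrap'' move governed by \cref{coilrel1,coilrel2}.
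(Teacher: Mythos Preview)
Your proposal is correct and follows essentially the same approach as the paper, whose proof is simply ``The proof is analogous to that of \cref{annularbraids}.'' You have spelled out that analogy in detail, correctly identifying the extra bookkeeping (cups, caps, closed components, framing) needed beyond the braid case and the role of \cref{coilrel1,coilrel2} in showing $G$ is well defined.
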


\begin{proof}
    The proof is analogous to that of \cref{annularbraids}.
\end{proof}

\begin{cor}
    The category $\FOT(A)$ of oriented tangles over the annulus is isomorphic to the strict monoidal category generated by objects $\uparrow$, $\downarrow$, and morphisms
    \[
        \posupcross,\ \negupcross,\ \posrightcross,\ \negrightcross,\ \posdowncross,\ \negdowncross,\ \posleftcross,\ \negleftcross,\ \rightcup,\ \leftcup,\ \rightcap,\ \rightcup,\ \dotup,\ \dotdown,
    \]
    subject to relations \cref{R0}, \cref{FR1}, \cref{R2}, \cref{R3}, and \cref{OA}.
\end{cor}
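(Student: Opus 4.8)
Since \cref{alligator} identifies $\FOT(A)$ with $\Aff(\FOT(D))$, it is enough to exhibit the stated presentation of $\Aff(\FOT(D))$. Recall that $\FOT(D)$ is a braided strict pivotal category---indeed a strict ribbon category---presented by the listed crossings, cups, and caps subject to \cref{R0}, \cref{FR1}, \cref{R2}, \cref{R3}. Hence, by \cref{plain}, the category $\Aff(\FOT(D))$ is obtained from $\FOT(D)$ by adjoining invertible morphisms $\xi_X \colon X \to X$ for every object $X$, subject to the first, third, and fourth relations in \cref{jaguar+}; and by \cref{rainbow} it is again strict pivotal, so the open-dot notation and the cap-slide relations \cref{porcupine} make sense in it.

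It remains to cut this presentation down to the generating objects $\uparrow$ and $\downarrow$ and to the relations \cref{OA}. Since the objects of $\FOT(D)$ are generated under $\otimes$ by $\uparrow$ and $\downarrow$, the third relation in \cref{jaguar+} expresses $\xi_X$, for an arbitrary object $X$, as a tensor product of copies of the dots on $\uparrow$ and $\downarrow$; via the open-dot convention these two dots are the generators $\dotup$ and $\dotdown$, so it suffices to adjoin these (with their inverses), the third relation serving as the definition of $\xi_X$ in general. (One could further eliminate $\dotdown$ using \cref{dotrot}, but we keep it.) Likewise, since the morphisms of $\FOT(D)$ are generated under composition and $\otimes$ by the crossings, cups, and caps, and $\xi$ is monoidal, the fourth relation in \cref{jaguar+} need only be imposed with its coupon ranging over these generating morphisms: when the coupon is a crossing it holds automatically once the first (hence second) relation in \cref{jaguar+} is imposed, and when the coupon is a cup or a cap it is equivalent to the cap-slide relations \cref{porcupine}, the companion cup-slide relations then following from \cref{porcupine} together with the zigzag relations in \cref{R0}. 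The first relation in \cref{jaguar+} is exactly the crossing-slide relation of \cref{OA}, and invertibility of $\xi_\uparrow$ is the final clause of \cref{OA}. Putting these reductions together shows that $\Aff(\FOT(D))$ is presented by the crossings, cups, caps, $\dotup$, and $\dotdown$, subject to \cref{R0}, \cref{FR1}, \cref{R2}, \cref{R3}, and \cref{OA}, which is the claim.

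Almost everything here is a formal consequence of results already in hand, so I do not anticipate a real obstacle; the point that requires the most care is the bookkeeping for the non-self-dual object $\downarrow$, namely checking that imposing the cap-slide relations in \cref{OA} fully accounts for the fourth relation in \cref{jaguar+} with cup and cap coupons and for the dot on $\downarrow$---this is precisely what \cref{dotrot} and \cref{porcupine} provide.
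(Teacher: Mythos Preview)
Your proposal is correct and follows exactly the approach the paper intends: the corollary is stated there without proof because it is meant to be the immediate combination of \cref{alligator} with \cref{plain}, together with the reduction to generating objects and generating morphisms spelled out in the remark following \cref{rainbow}. Your write-up simply makes that chain of reasoning explicit, including the observation that the fourth relation in \cref{jaguar+} is automatic for crossings and reduces to the cap-slide relations for cups and caps, so there is nothing to add.
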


\subsection{HOMFLYPT skein category\label{subsec:HOMFLYPT}}

Fix $z,\delta \in \kk$ and $t \in \kk^\times$ satisfying $z \delta = t-t^{-1}$.  Generically, we can work over the ring $\kk = \Z[z,t,t^{-1},\delta]/(z\delta - t + t^{-1})$.  The \emph{framed HOMFLYPT skein category} $\OS(D;z,t,\delta)$ (resp.\ $\OS(A;z,t,\delta)$) over the disc (resp.\ over the annulus) is the category obtained from $\FOT(D)_\kk$ (resp.\ from $\FOT(A)_\kk$) by imposing the Conway skein relation \cref{CS}, the twist relation \cref{T} for the upward orientation of the strands, and the dimension relation \cref{D} with either orientation of the bubble (see \cref{hoppy}).  The reason for imposing the condition $z\delta = t-t^{-1}$ in our coefficient ring is that we have
\begin{equation} \label{giraffe}
    (t-t^{-1})\
    \begin{tikzpicture}[centerzero]
        \draw[->] (0,-0.6) -- (0,0.6);
    \end{tikzpicture}
    \overset{\cref{T}}{=}
    \begin{tikzpicture}[centerzero]
    	\draw[<-] (0,0.6) to (0,0.3);
    	\draw (0.3,-0.2) to [out=0,in=-90](.5,0);
    	\draw (0.5,0) to [out=90,in=0](0.3,0.2);
    	\draw (0,-0.3) to (0,-0.6);
    	\draw (0,0.3) to [out=-90,in=180] (0.3,-0.2);
    	\draw[wipe] (0.3,.2) to [out=180,in=90](0,-0.3);
    	\draw (0.3,.2) to [out=180,in=90](0,-0.3);
    \end{tikzpicture}
    -
    \begin{tikzpicture}[centerzero]
    	\draw[<-] (0,0.6) to (0,0.3);
    	\draw (0.3,-0.2) to [out=0,in=-90](.5,0);
    	\draw (0.5,0) to [out=90,in=0](0.3,0.2);
    	\draw (0,-0.3) to (0,-0.6);
    	\draw (0.3,.2) to [out=180,in=90](0,-0.3);
    	\draw[wipe] (0,0.3) to [out=-90,in=180] (0.3,-0.2);
    	\draw (0,0.3) to [out=-90,in=180] (0.3,-0.2);
    \end{tikzpicture}
    \overset{\cref{CS}}{=} z\
    \begin{tikzpicture}[centerzero]
        \draw[->] (0,-0.6) -- (0,0.6);
        \bubright{0.4,0};
    \end{tikzpicture}
    \overset{\cref{D}}{=} z \delta\
    \begin{tikzpicture}[centerzero]
        \draw[->] (0,-0.6) -- (0,0.6);
    \end{tikzpicture}
    \ .
\end{equation}
When $z \in \kk^\times$, we have $\delta = (t-t^{-1})/z$, and so we can omit $\delta$ from the notation.  In this case, we denote the category by $\OS(D;z,t)$ (resp.\ $\OS(A;z,t)$).  The category $\OS(D;z,t,\delta)$ was first introduced in \cite[\S5.2]{Tur89}, where it was called the \emph{Hecke category} (not to be confused with the more modern use of this term, which is related to the category of Soergel bimodules).  Our choice of the notation $\OS$ comes from \emph{oriented skein}.

The category $\OS(D;z,t)$ underpins the HOMFLYPT polynomial in the following sense.  Given an oriented link diagram $L$, define $\writhe(L)$ to be the number of positive crossings minus the number of negative crossings in $L$.  Viewing $L$ as an element of $\End_{\OS(D;z,t)}(\one)$, there is a unique scalar $H_L(z,t) \in \kk$ such that $t^{-\writhe(L)} L = H_L(z,t) \delta 1_\one$.  (The factor of $\delta$ appears here to normalize the polynomial so that $H_L(z,t)=1$ when $L$ is the unknot.)  If $\kk = \Z[z,z^{-1},t,t^{-1}]$, then $H_L(z,t)$ is precisely the HOMFLYPT polynomial of $L$.  Since the writhe number of a knot is independent of its orientation, this gives an invariant of unoriented knots.  The Alexander, Conway, and Jones polynomials are all specializations of the HOMFLYPT polynomial.

\begin{prop} \label{dragon}
    The affinization $\Aff(\OS(D;z,t,\delta))$ of the framed HOMFLYPT skein category over the disc is isomorphic to $\OS(A;z,t,\delta)$, the framed HOMFLYPT skein category over the annulus.
\end{prop}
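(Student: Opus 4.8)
The plan is to follow the template already established in the proofs of \cref{annularbraids,crocodile,alligator}. Since \cref{alligator} gives an isomorphism $\Aff(\FOT(D)) \cong \FOT(A)$ that is the identity on objects, sends each coil $\xi_{X,Y}$ to the corresponding ``wrap-around'' annular tangle, and restricts to the identity on the subcategory $\FOT(D)$, it suffices to check that this isomorphism descends to the quotients by the defining relations of the HOMFLYPT skein categories. By construction, $\OS(D;z,t,\delta)$ is the quotient of $\FOT(D)_\kk$ by the ideal generated by the local relations \cref{CS}, \cref{T} (upward strands), and \cref{D}, and similarly $\OS(A;z,t,\delta)$ is the quotient of $\FOT(A)_\kk$ by the ideal generated by the same local relations (now interpreted in the annular diagrammatics).

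First I would invoke \cref{alligator} to get the $\kk$-linear isomorphism $\Phi \colon \Aff(\FOT(D)_\kk) \xrightarrow{\sim} \FOT(A)_\kk$. Next I would observe that affinization is compatible with quotients: if $\cI$ is the tensor ideal of $\FOT(D)_\kk$ generated by the relations \cref{CS}, \cref{T}, \cref{D}, then $\Aff(\FOT(D)_\kk / \cI)$ is obtained from $\Aff(\FOT(D)_\kk)$ by imposing those same relations. Concretely, this follows from \cref{plain}: $\Aff(\FOT(D)_\kk)$ is $\FOT(D)_\kk$ with the dot generators $\xi_\uparrow, \xi_\downarrow$ adjoined subject to the relations in \cref{jaguar+}, and none of the relations \cref{CS}, \cref{T}, \cref{D} involve dots, so adjoining dots and then quotienting gives the same category as quotienting and then adjoining dots. (One should note here that the twist on $\OS(D;z,t,\delta)$ is still determined by $\theta_\uparrow = 1_\uparrow$ via \cref{swirl} together with \cref{looptwist}, so the dot generators are still well behaved after the quotient, and by \cref{inject} the passage $\OS(D) \to \Aff(\OS(D))$ remains faithful.) Then $\Phi$ carries the generators of $\cI$ in $\Aff(\FOT(D)_\kk)$ to the corresponding local relations in $\FOT(A)_\kk$, since $\Phi$ is the identity on $\FOT(D) \subseteq \FOT(A)$ and these relations are drawn entirely away from the cut (a local tangle relation can always be isotoped off the cutting line). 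Hence $\Phi$ induces the desired isomorphism $\Aff(\OS(D;z,t,\delta)) \cong \OS(A;z,t,\delta)$, which is again the identity on objects and restricts to the identity on $\OS(D;z,t,\delta)$.

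The main point requiring care — and the step I expect to be the real obstacle — is verifying that passing $\Phi$ to the quotient is legitimate in \emph{both} directions, i.e.\ that the tensor ideal generated by \cref{CS}, \cref{T}, \cref{D} in $\FOT(A)_\kk$ pulls back under $\Phi^{-1}$ to exactly the ideal generated by the same relations in $\Aff(\FOT(D)_\kk)$, with no extra relations appearing because strands may wrap the annulus. This is where the construction of $\Phi^{-1}$ from \cref{annularbraids} (cutting an annular diagram into slices, each either a disc tangle or a wrap-around generator) does the work: any local instance of \cref{CS}, \cref{T}, or \cref{D} in an annular diagram occurs inside a disc-slice (after a small isotopy pushing it off the cut), so applying the relation there corresponds under $\Phi^{-1}$ to applying it inside $\FOT(D)_\kk \subseteq \Aff(\FOT(D)_\kk)$; conversely the relations \cref{jaguar+} governing the dots are preserved since they are exactly the relations built into $\Phi$. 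Once this slicing-and-locality argument is in place, the verification that the induced functors are mutually inverse is immediate from the corresponding fact for $\Phi$, $\Phi^{-1}$. As the proofs of the preceding propositions are stated to be ``analogous to that of \cref{annularbraids}'', I would likewise keep this proof brief, writing simply that the argument parallels the proof of \cref{alligator}, additionally noting that the skein relations \cref{CS}, \cref{T}, \cref{D} are local and hence respected by the functors constructed there.
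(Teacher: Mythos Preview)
Your proposal is correct and aligns with the paper's own proof, which simply reads ``The proof is analogous to that of \cref{annularbraids}.'' You have organized the argument slightly differently---rather than redoing the construction of the mutually inverse functors $F$ and $G$ directly for $\OS$, you invoke \cref{alligator} for $\FOT$ and then pass to the quotient by the local skein relations \cref{CS}, \cref{T}, \cref{D}---but this is a cosmetic repackaging of the same idea, and your locality/slicing justification for why the ideals correspond under $\Phi$ is exactly the content of the ``analogous'' proof.

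One small remark: your parenthetical about the twist is both unnecessary and slightly off. The affinization (\cref{affdef}) and its monoidal structure (\cref{afftensor}, \cref{plain}) require only that $\cC$ be braided, not balanced, so no twist needs to be invoked here; moreover, the natural twist on $\OS(D;z,t,\delta)$ coming from \cref{looptwist} is $\theta_\uparrow = t\, 1_\uparrow$ by \cref{T}, not $1_\uparrow$. You can safely drop that parenthetical and the appeal to \cref{inject} without affecting the argument.
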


\begin{proof}
    The proof is analogous to that of \cref{annularbraids}.
\end{proof}

\begin{cor} \label{Brundan}
    The framed HOMFLYPT skein category $\OS(A;z,t,\delta)$ over the annulus is isomorphic to the strict monoidal category generated by objects $\uparrow$, $\downarrow$, and morphisms
    \[
        \posupcross,\ \negupcross,\ \posrightcross,\ \negrightcross,\ \posdowncross,\ \negdowncross,\ \posleftcross,\ \negleftcross,\ \rightcup,\ \leftcup,\ \rightcap,\ \rightcup,\ \dotup,\ \dotdown,
    \]
    subject to relations \cref{R0}, \cref{FR1}, \cref{R2}, \cref{R3}, \cref{CS}, \cref{T}, \cref{D}, and \cref{OA}.
\end{cor}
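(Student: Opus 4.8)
The plan is to deduce this from \cref{dragon} together with the general presentation of the affinization in \cref{plain}. By \cref{dragon} we have an isomorphism $\OS(A;z,t,\delta) \cong \Aff(\OS(D;z,t,\delta))$, so it suffices to produce the stated presentation for $\Aff(\OS(D;z,t,\delta))$.

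First I would record a presentation of $\OS(D;z,t,\delta)$ itself: it is the strict monoidal category generated by $\uparrow$, $\downarrow$ and the crossing, cup, and cap morphisms, subject to \cref{R0}, \cref{FR1}, \cref{R2}, \cref{R3} (the presentation of $\FOT(D)$ recalled above) together with \cref{CS}, \cref{T}, and \cref{D}. This category is braided strict monoidal, being a quotient of the $\kk$-linear braided category $\FOT(D)_\kk$, so \cref{plain} applies: $\Aff(\OS(D;z,t,\delta))$ is obtained from $\OS(D;z,t,\delta)$ by adjoining invertible morphisms $\xi_X$ for all objects $X$, subject to the first, third, and fourth relations in \cref{jaguar+}.

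Next I would reduce this to the finite presentation claimed, exactly as in the Remark following \cref{rainbow}. Since the objects of $\OS(D;z,t,\delta)$ are generated by $\uparrow$ and $\downarrow$, the third relation in \cref{jaguar+} shows that $\xi$ on an arbitrary object is determined by $\xi_\uparrow = \dotup$ and $\xi_\downarrow = \dotdown$, so it is enough to adjoin these two invertible generators. The fourth relation in \cref{jaguar+} need only be imposed with the coupon ranging over the generating morphisms of $\OS(D;z,t,\delta)$: when the coupon is a crossing it holds automatically once the first relation in \cref{jaguar+} is imposed (as shown in the Remark, using \cref{W}, which follows from \cref{R0}), and when the coupon is a cup or cap it reduces to the cup- and cap-slide relations of \cref{porcupine} for the non-self-dual objects $\uparrow$, $\downarrow$, which are precisely the remaining relations listed in \cref{OA}. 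Likewise the first relation in \cref{jaguar+} for a general pair of objects follows, via the third relation and \cref{W}, from its instance for $\uparrow$, namely the first relation in \cref{OA}. Invertibility of $\dotup$ is listed in \cref{OA}, and invertibility of $\dotdown$ then follows since $\dotdown$ is the inverse-mate of $\dotup$ by \cref{dotrot}. Assembling these observations yields exactly the presentation in the statement.

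The main point to verify carefully is the bookkeeping in the previous paragraph: that the list \cref{R0}, \cref{FR1}, \cref{R2}, \cref{R3}, \cref{CS}, \cref{T}, \cref{D}, \cref{OA} captures all of the relations produced by \cref{plain} — in particular that no separate relations for $\dotdown$ sliding through a crossing, for $\dotdown$ sliding over a cup or cap, or for invertibility of $\dotdown$ are needed, and that the single upward crossing-slide relation in \cref{OA} recovers the first relation in \cref{jaguar+} for all labelings. This is where one invokes the pivotal structure on $\Aff(\OS(D;z,t,\delta))$ established in \cref{rainbow} and the fact that every crossing is expressible from the upward crossing via \cref{W}; everything else is routine.
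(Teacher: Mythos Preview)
Your approach is correct and is exactly the (implicit) argument the paper intends: the corollary is stated without proof because it follows immediately from \cref{dragon}, \cref{plain}, and the reduction described in the Remark following \cref{rainbow}. One small point worth tightening: when you invoke \cref{dotrot} to deduce invertibility of $\dotdown$, that theorem is stated for $\Aff(\cC)$, whereas here you need the conclusion inside the \emph{presented} category; the fix is to observe directly that the cap-slide relation in \cref{OA} together with \cref{zigright} forces $\dotdown$ to equal the right mate of $\dotup$, whence it is invertible because $\dotup$ is.
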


The presentation from \cref{Brundan} implies that $\OS(A;z,t)$ is isomorphic to the \emph{affine oriented skein category} $\mathpzc{AOS}(z,t)$ of \cite[\S4]{Bru17}, which is also the \emph{quantum Heisenberg category of central charge zero} (see \cite{BSW-qheis}).  The functor $\OS(A;z,t) \to \OS(D;s,t)$ from \cref{flatten} corresponds to the functor described in \cite[Lem.~4.2]{Bru17} after rescaling the dots by a factor of $t$.

\section{Unoriented examples}

In this section we continue our study of examples of the affinization of monoidal categories, now focussing on unoriented categories of tangles and skein categories.

\subsection{Tangles}

The category $\T(D)$ of tangles over the disc is isomorphic to the strict monoidal category generated by a single object $\go$, and morphisms
\[
    \poscross,\ \negcross,\ \uncup,\ \uncap,
\]
subject to the relations \cref{R0}, \cref{R1}, \cref{R2}, and \cref{R3}; see \cite[Th.~3.5]{FY89}.  The category $\T(D)$ is a strict ribbon category.

\begin{prop} \label{kiwi}
    The affinization $\Aff(\T(D))$ of the category of tangles over the disc is isomorphic to the category $\T(A)$ of tangles over the annulus.
\end{prop}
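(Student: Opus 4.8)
The plan is to mimic the proof of \cref{annularbraids} essentially verbatim, since the only difference is the presence of cups and caps in addition to crossings. I would construct mutually inverse functors $F \colon \Aff(\T(D)) \to \T(A)$ and $G \colon \T(A) \to \Aff(\T(D))$, both of which are the identity on objects (the objects on both sides being natural numbers, or equivalently tensor powers of $\go$).

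First I would define $F$. On the subcategory $\T(D) \subseteq \Aff(\T(D))$, the functor $F$ sends a tangle over the disc to the same tangle viewed as a tangle over the annulus via the identification \cref{lasso}; this is well defined and monoidal because $\T(D)$ embeds in $\T(A)$ compatibly with tensor products (the tensor product in $\T(A)$ restricts to that of $\T(D)$ on diagrams not meeting the cut). On the coils, I would set $F(\xi_{\go^{\otimes n},\go})$ and $F(\xi_{\go^{\otimes n},\go}^{-1})$ to be the annular tangles in which a single strand wraps around the annulus over (respectively under) the $n$ parallel strands — exactly the pictures in \cref{spirit}, now with unoriented strands. To see $F$ is well defined, I must check it respects \cref{coilrel1,coilrel2}; relation \cref{coilrel1} corresponds to the obvious fact that wrapping a strand labelled $\go^{\otimes (m+1)}$ around equals wrapping $\go^{\otimes m}$ then $\go$, and \cref{coilrel2} corresponds to the ability to isotope an annular diagram so that any tangle in $\T(D)$ slides around the annulus, both of which are clear topologically. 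Checking that $F$ is monoidal amounts to verifying \cref{stack}, i.e.\ that tensoring a wrapped strand with parallel strands on the right produces the expected crossings with those strands, which follows from the definition of the tensor product in $\T(A)$.

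Next I would define $G$. Given a tangle $f$ over the annulus, cut along the line of \cref{lasso} to get a diagram in a rectangle with identified vertical edges, isotoped to meet the cut transversely. Such a diagram decomposes as a vertical composite of elementary layers: those lying entirely inside the rectangle (not touching the vertical edges), which $G$ sends to the corresponding morphism of $\T(D) \subseteq \Aff(\T(D))$, and those of the form \cref{spirit} with possibly some parallel strands in the middle, which $G$ sends to $\xi_{\go^{\otimes n},\go}^{\pm 1}$. Here I need to know that every annular tangle diagram can be put in such a form after isotopy — a strand crossing the cut can be isotoped so that it crosses exactly once, near some horizontal level, with all other strands at that level running parallel — and that the result is independent of the choices made; independence follows from the relations \cref{coilrel1,coilrel2} in $\Aff(\T(D))$, which encode precisely the moves relating different such decompositions (sliding a crossing or a cup/cap past the cut, merging two consecutive wrappings, changing which strands participate). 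Then a diagram chase using \cref{bootstrap} and the relations shows $F$ and $G$ are mutually inverse: $G \circ F = \id$ because $G$ undoes the identification $F$ performs, and $F \circ G = \id$ on generators by inspection.

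The main obstacle is the well-definedness of $G$: one must argue carefully that any two ways of cutting and decomposing a given annular tangle into disc-tangle layers and wrapping layers are related by a finite sequence of moves, each of which is matched by one of \cref{coilrel1,coilrel2} (together with isotopy inside the disc, and the windmill relation \cref{W} and the defining relations of $\T(D)$). This is intuitively obvious from the topology but is the step requiring genuine care; it is the exact analogue of the corresponding (deferred) verification in the proof of \cref{annularbraids}, and since $\T(A)$ is governed by the same local relations \cref{R0,R1,R2,R3} plus isotopy as $\Braid(A)$ but with cups and caps allowed, the argument goes through with only cosmetic changes. The remaining verifications — that $F$ respects the defining relations, that $F$ is monoidal, that $F$ and $G$ compose to identities — are routine and parallel \cref{annularbraids} and \cref{plain}.
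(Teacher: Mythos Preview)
Your proposal is correct and follows exactly the approach the paper takes: its proof simply says ``analogous to that of \cref{annularbraids},'' and you have spelled out precisely that analogy, constructing the mutually inverse functors $F$ and $G$ via the same decomposition into disc-type layers and wrapping layers \cref{spirit}. The extra care you flag about well-definedness of $G$ is also glossed over in the paper's proof of \cref{annularbraids}, so your treatment is if anything more thorough.
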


\begin{proof}
    The proof is analogous to that of \cref{annularbraids}.
\end{proof}

\begin{cor}
    The category $\T(A)$ of tangles over the annulus is isomorphic to the strict monoidal category generated by a single object $\go$, and morphisms
    \[
        \poscross,\ \negcross,\ \uncup,\ \uncap,\ \posdotun,\ \negdotun,
    \]
    subject to the relations \cref{R0}, \cref{R1}, \cref{R2}, \cref{R3}, and \cref{UA}.
\end{cor}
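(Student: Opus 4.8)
The plan is to combine \cref{kiwi} with \cref{plain} and the known presentation of $\T(D)$. By \cref{kiwi} we have $\T(A) \cong \Aff(\T(D))$, so it suffices to exhibit the claimed presentation of $\Aff(\T(D))$. Recall that $\T(D)$ is a braided (indeed ribbon) strict monoidal category whose objects are freely generated by the single \emph{self-dual} object $\go$, and that it is presented by the generating morphisms $\poscross$, $\negcross$, $\uncup$, $\uncap$ subject to \cref{R0,R1,R2,R3} (see \cite[Th.~3.5]{FY89}). Hence \cref{plain} applies: $\Aff(\T(D))$ is obtained from $\T(D)$ by adjoining, for each object, an invertible morphism $\xi_X$ subject to the first, third, and fourth relations in \cref{jaguar+}. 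Since the objects of $\T(D)$ are generated by $\go$, the third relation in \cref{jaguar+} is not an additional relation but merely the recursive definition of $\xi_{\go^{\otimes n}}$ from $\xi_\go$, so by the remark following \cref{rainbow} it is enough to adjoin the single generator $\xi_\go = \posdotun$, whose inverse $\xi_\go^{-1} = \negdotun$ is governed by the last relation in \cref{UA}.

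First I would identify the first relation in \cref{jaguar+} --- a dot sliding through a crossing --- with the first relation in \cref{UA}; as in the proof of \cref{plain}, the variant with the opposite crossing is then a consequence of it and \cref{R2}. Next I would treat the fourth relation in \cref{jaguar+}, that $\xi_\go$ commutes with every morphism of $\T(D)$. By the reduction in the remark after \cref{rainbow} it suffices to impose this with the coupon ranging over the generators $\poscross$, $\negcross$, $\uncup$, $\uncap$. For the two crossings the relation is automatic from the first two relations in \cref{jaguar+}. For the cap $\uncap$ --- and here, since $\go$ is self-dual, we are in the situation of \cref{wolverine} rather than \cref{porcupine} --- the relation reduces to the cap-slide relation, which is the middle relation in \cref{UA}; the corresponding cup relation then follows from \cref{R0} via \cref{zigright,zigleft}. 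Assembling these observations, the defining relations of $\Aff(\T(D))$ become exactly \cref{R0,R1,R2,R3,UA}, which proves the corollary; this is entirely parallel to the corollaries accompanying \cref{annularbraids,crocodile} and the other affinization propositions.

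The argument is essentially bookkeeping, and the only point that I expect to require genuine care is the reduction of the fourth relation in \cref{jaguar+} to the finitely many single-strand instances in \cref{UA}: one must check that these instances, together with \cref{R0,R2,R3} and the monoidal expansion \cref{wrap}, recover the slide relations of \cref{jaguar+} for the wider strands. This is precisely the content of the remark following \cref{rainbow}, so once the orientation conventions are confirmed to match, no computation beyond citing \cref{kiwi,plain} and the presentation of $\T(D)$ is needed.
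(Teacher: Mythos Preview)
Your proposal is correct and follows exactly the (implicit) argument the paper intends: the corollary is stated without proof because it is immediate from \cref{kiwi}, \cref{plain}, and the remark after \cref{rainbow}, and you have unpacked precisely those ingredients, including the self-dual refinement via \cref{wolverine}. There is nothing to add.
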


\subsection{Framed tangles}

The category $\FT(D)$ of framed tangles over the disc is isomorphic to the strict monoidal category generated by a single object $\go$, and morphisms
\[
    \poscross,\ \negcross,\ \uncup,\ \uncap,
\]
subject to the relations \cref{R0}, \cref{FR1}, \cref{R2}, and \cref{R3}; see \cite[p.~436]{Tur89}.  It is a strict ribbon category.

\begin{prop} \label{dodo}
    The affinization $\Aff(\FT(D))$ of the category of framed tangles over the disc is isomorphic to the category $\FT(A)$ of tangles over the annulus.
\end{prop}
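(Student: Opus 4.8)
The plan is to follow the same template as the proof of \cref{annularbraids}, now with the single self-dual generator $\go$ of $\FT(D)$ in place of $\uparrow$ and with the presentation of $\FT(D)$ recalled above (the relations \cref{R0}, \cref{FR1}, \cref{R2}, \cref{R3}).  First I would construct a functor $F \colon \Aff(\FT(D)) \to \FT(A)$ that is the identity on objects and sends each generating morphism of $\FT(D) \subseteq \Aff(\FT(D))$ --- the crossings $\poscross$, $\negcross$ and the cup $\uncup$ and cap $\uncap$ --- to the same framed tangle, now regarded as a framed tangle in the annulus via the identification \cref{lasso}.  On the coils I would set
\[
    F(\xi_\go) =
    \begin{tikzpicture}[centerzero]
        \draw (0,-0.3) to[out=up,in=west] (0.3,0);
        \draw (-0.3,0) to[out=east,in=south] (0,0.3);
        \identify{-0.3}{-0.3}{0.3}{0.3};
    \end{tikzpicture}
    \ ,\qquad
    F(\xi_\go^{-1}) =
    \begin{tikzpicture}[centerzero]
        \draw (0,-0.3) to[out=up,in=east] (-0.3,0);
        \draw (0.3,0) to[out=west,in=south] (0,0.3);
        \identify{-0.3}{-0.3}{0.3}{0.3};
    \end{tikzpicture}
    \ ,
\]
the framed strand wrapping once around the core of the annulus, with blackboard framing.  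By \cref{plain} it suffices to check that these images satisfy the first, third, and fourth relations in \cref{jaguar+}, and each of these is a routine ambient isotopy of framed tangles in the annulus, so $F$ is a well-defined monoidal functor.

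Conversely, I would build a functor $G \colon \FT(A) \to \Aff(\FT(D))$, again the identity on objects.  Given a framed tangle over the annulus, cut the annulus as in \cref{sec:tangles} to obtain a diagram in a rectangle with identified vertical edges; after an isotopy making the tangle transverse to the cut, it becomes a vertical composite of slices, each of which either misses the vertical edges entirely or is, up to isotopy, a single strand crossing the cut with $n$ further strands running straight through the middle --- a picture of the form \cref{spirit} with $\go$ in place of $\uparrow$.  I would declare $G$ to send a slice missing the cut to the same framed tangle read in the disc, and such a wrapping slice to $\xi_{\go^{\otimes n},\go}$ or $\xi_{\go^{\otimes n},\go}^{-1}$ according to its sign.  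The relations \cref{coilrel2} say precisely that a morphism of $\FT(D)$ may be slid around the cut, which is what makes this assignment independent of the chosen slicing and isotopy, so $G$ is well defined; its monoidality follows from the description of the tensor product on $\FT(A)$ recalled in \cref{sec:tangles}.

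Finally I would verify that $F$ and $G$ are mutually inverse.  Both are the identity on objects.  The composite $G \circ F$ is the identity on morphisms because $G$ returns each generator of $\FT(D)$ and each coil to its original form, using the identity $\xi_{\go,\go} = \beta_{\go,\go} \circ (1_\go \otimes \xi_\go)$ from \cref{bootstrap} to reconcile $\xi_\go$ with the wrapping strand.  The composite $F \circ G$ is the identity because reassembling the slices of a cut annular tangle and interpreting each wrapping slice as the corresponding cylindrical picture reproduces the original framed tangle up to ambient isotopy in the annulus.

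The step I expect to be the main obstacle is checking that $G$ is well defined: one must show that any two transverse slicings of the same annular framed tangle, and any ambient isotopy relating two such tangles, are connected by finitely many local moves, each of which is accounted for by the defining relations of $\FT(D)$ together with the coil relations \cref{coilrel1,coilrel2}.  This is precisely where the framing enters --- one uses \cref{FR1} in place of \cref{R1} from the unframed case of \cref{kiwi} --- but beyond that the bookkeeping is identical to that in the proof of \cref{annularbraids}, which is why it suffices to cite that argument.
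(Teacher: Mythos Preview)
Your proposal is correct and follows exactly the approach the paper intends: the paper's proof is simply ``The proof is analogous to that of \cref{annularbraids},'' and you have faithfully expanded that analogy, replacing $\uparrow$ by the self-dual generator $\go$ and noting that \cref{FR1} plays the role of \cref{R1}. There is nothing to add.
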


\begin{proof}
    The proof is analogous to that of \cref{annularbraids}.
\end{proof}

\begin{cor}
    The category $\FT(A)$ of framed tangles over the annulus is isomorphic to the strict monoidal category generated by a single object $\go$, and morphisms
    \[
        \poscross,\ \negcross,\ \uncup,\ \uncap,\ \posdotun,\ \negdotun,
    \]
    subject to the relations \cref{R0}, \cref{FR1}, \cref{R2}, \cref{R3}, and \cref{UA}.
\end{cor}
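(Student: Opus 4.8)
The plan is to combine \cref{dodo} with the dot-generator description of the affinization from \cref{plain}, specialized to the single self-dual generating object $\go$ of $\FT(D)$. By \cref{dodo} there is an isomorphism $\FT(A) \cong \Aff(\FT(D))$, so it suffices to show that $\Aff(\FT(D))$ has the presentation in the statement. Recall that $\FT(D)$ is the strict monoidal category generated by $\go$ and the morphisms $\poscross$, $\negcross$, $\uncup$, $\uncap$, subject to \cref{R0}, \cref{FR1}, \cref{R2}, and \cref{R3}; in particular it is a strict ribbon category, hence braided, so \cref{plain} applies.

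Next I would apply \cref{plain}: $\Aff(\FT(D))$ is obtained from $\FT(D)$ by adjoining invertible dots $\xi_X$ for every object $X$, subject to the first, third, and fourth relations in \cref{jaguar+}. Since the objects of $\FT(D)$ are generated by the single object $\go$, I invoke the reduction described in the remark following \cref{rainbow} to adjoin only the single generator $\xi_\go = \posdotun$, with inverse written $\negdotun$. The third relation in \cref{jaguar+} then \emph{defines} the dot on each object $\go^{\otimes n}$ and so need not be imposed. The first relation in \cref{jaguar+} is a positive dot sliding through a positive crossing, which is exactly the first relation in \cref{UA}, and invertibility of $\posdotun$ is the third relation in \cref{UA}. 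Thus it only remains to impose the fourth relation in \cref{jaguar+} with the coupon ranging over the generating morphisms $\poscross$, $\negcross$, $\uncup$, $\uncap$.

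This is precisely where the reduction from that remark does the work. When the coupon is a crossing, the fourth relation is automatically satisfied, as verified there. When the coupon is a cap, the fourth relation is equivalent to the cap-slide relation; because $\go$ is self-dual, this takes the unoriented form of \cref{wolverine} rather than the oriented \cref{porcupine}, which is the second relation in \cref{UA}. The matching cup-slide relations then follow from the cap-slides using the zigzag identities contained in \cref{R0} (the self-dual analogues of \cref{zigright,zigleft}). Assembling these observations, the defining data of $\Aff(\FT(D))$ reduce to the generators $\poscross$, $\negcross$, $\uncup$, $\uncap$, $\posdotun$, $\negdotun$ subject to \cref{R0}, \cref{FR1}, \cref{R2}, \cref{R3}, and \cref{UA}, which is exactly the presentation in the statement.

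The main obstacle I anticipate is the bookkeeping in the final paragraph: one must track carefully that $\go$ is self-dual, so that the cap-slide relations appear in their unoriented \cref{wolverine} form, and confirm that imposing the cap-slides alone (with cup-slides following from \cref{R0}) genuinely captures the fourth relation in \cref{jaguar+} for all of the generating cups and caps. Once this verification is complete, transporting the presentation across the isomorphism of \cref{dodo} yields the claimed presentation of $\FT(A)$.
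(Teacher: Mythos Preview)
Your proposal is correct and follows exactly the approach the paper intends: the corollary is stated without proof in the paper, being an immediate consequence of \cref{dodo} together with \cref{plain} and the remark following \cref{rainbow}, which is precisely the chain of reductions you carry out. Your careful tracking of the self-dual case (so that the cap-slide relation appears in its \cref{wolverine} form, matching the second relation in \cref{UA}) is the only point requiring attention, and you handle it correctly.
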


\subsection{Kauffman skein category}

Let $\varepsilon \in \{1,-1\}$, and fix $z,\delta \in \kk$, $t \in \kk^\times$ satisfying $z(\delta + \varepsilon) = t + \varepsilon t^{-1}$.  Generically, we work over the ring $\kk = \Z[z,t,t^{-1},\delta]/(z(\delta + \varepsilon) - t - \varepsilon t^{-1})$.  The \emph{Kauffman skein categories} $\KS_\varepsilon(D;z,t,\delta)$ over the disc are the categories obtained from $\FT(D)_\kk$ by imposing the relation ($\text{KS}_\varepsilon$) (i.e.\ \cref{KS+} when $\varepsilon=1$ and \cref{KS-} when $\varepsilon=-1$), the twist relation \cref{T}, and the dimension relation \cref{D}.  Replacing the disc $D$ by the annulus $A$, we get the analogous categories over the annulus.  One might also use the term \emph{Kauffman skein category} to refer to the choice $\varepsilon=1$ and \emph{Dubrovnik skein category} to refer to the choice $\varepsilon=-1$.

The reason for imposing the condition $z(\delta + \varepsilon) = t + \varepsilon t^{-1}$ in our coefficient ring is that we have
\begin{equation} \label{hippo}
    (t + \varepsilon t^{-1})\
    \begin{tikzpicture}[centerzero]
        \draw (0,-0.6) -- (0,0.6);
    \end{tikzpicture}
    \overset{\cref{T}}{=}
    \begin{tikzpicture}[centerzero]
    	\draw (0,0.6) to (0,0.3);
    	\draw (0.3,-0.2) to [out=0,in=-90](.5,0);
    	\draw (0.5,0) to [out=90,in=0](0.3,0.2);
    	\draw (0,-0.3) to (0,-0.6);
    	\draw (0,0.3) to [out=-90,in=180] (0.3,-0.2);
    	\draw[wipe] (0.3,.2) to [out=180,in=90](0,-0.3);
    	\draw (0.3,.2) to [out=180,in=90](0,-0.3);
    \end{tikzpicture}
    + \varepsilon\
    \begin{tikzpicture}[centerzero]
    	\draw (0,0.6) to (0,0.3);
    	\draw (0.3,-0.2) to [out=0,in=-90](.5,0);
    	\draw (0.5,0) to [out=90,in=0](0.3,0.2);
    	\draw (0,-0.3) to (0,-0.6);
    	\draw (0.3,.2) to [out=180,in=90](0,-0.3);
    	\draw[wipe] (0,0.3) to [out=-90,in=180] (0.3,-0.2);
    	\draw (0,0.3) to [out=-90,in=180] (0.3,-0.2);
    \end{tikzpicture}
    \overset{(\text{KS}_\varepsilon)}{\underset{\cref{R0}}{=}} z\
    \begin{tikzpicture}[centerzero]
        \draw (0,-0.6) -- (0,0.6);
        \bubun{0.4,0};
    \end{tikzpicture}
    + \varepsilon z\
    \begin{tikzpicture}[centerzero]
        \draw (0,-0.6) -- (0,0.6);
    \end{tikzpicture}
    \overset{\cref{D}}{=} z(\delta + \varepsilon)\
    \begin{tikzpicture}[centerzero]
        \draw (0,-0.6) -- (0,0.6);
    \end{tikzpicture}
    \ .
\end{equation}
When $z \in \kk^\times$, we have $\delta = 1 - \varepsilon (t + \varepsilon t^{-1})/z$, and we denote the categories simply by $\KS_\varepsilon(D;z,t)$.  These categories were introduced by Turaev in \cite[\S7.7]{Tur89}.

The endomorphism algebras $\End_{\KS_-(D;z,t)}(\go^{\otimes n})$ are the \emph{Kauffman tangle algebras}, which are isomorphic to the \emph{Birman--Murakami--Wenzl (BMW) algebras}; see \cite{Mor10}.  Explicit bases for the morphism spaces of $\KS_\pm(D;z,t)$ are given in \cite[Th.~7.8]{Tur89}.  In particular, $\End(\one)$ is one-dimensional, and this gives rise to link invariants as for the HOMFLYPT skein category described in \cref{subsec:HOMFLYPT}.   Namely, suppose $\kk = \Z[z,t,t^{-1},\delta]/(z(\delta + \varepsilon) - t - \varepsilon t^{-1})$.   Given an oriented link diagram $L$, we can view $L$ as an element of $\End_{\KS_\varepsilon(D;z,t)}(\one)$.  Then there is a unique scalar $F_{L,\varepsilon}(z,t) \in \kk[z,t,t^{-1}]$ such that $t^{-\writhe(L)} L = F_{L,\varepsilon}(z,t) \delta 1_\one$.  The scalar $F_{L,1}(z,t)$ is the \emph{Kauffman polynomial} of $L$ and $F_{L,-1}(z,t)$ is the \emph{Dubrovnik polynomial} of $L$.

It was noted by Lickorish (see \cite[p.~466]{Kau90}) that the Kauffman and Dubrovnik polynomials are essentially equivalent when one extends scalars to a ring including a square root $i = \sqrt{-1}$ of negative one, in the sense that
\[
    F_{L,1}(z,t) = i^{-\writhe(L)} (-1)^{c(L)+1} F_{L,-1}(-iz,it),
\]
where $c(L)$ is the number of components of $L$.  More generally, one has the following.

\begin{lem}
    Suppose $\kk$ contains a square root $i$ of $-1$.  Then, for each $n \in \N$, we have an isomorphism of algebras
    \[
        \End_{\KS_+(D;z,t,\delta)}(\go^{\otimes n}) \to \End_{\KS_-(D;-iz,it,-\delta)}(\go^{\otimes n}),
    \]
    determined by
    \begin{equation} \label{headlight}
        \begin{aligned}
            1_{\go^{\otimes (j-1)}} \otimes \poscross \otimes 1_{\go^{\otimes (n-j-1)}}
            &\mapsto i 1_{\go^{\otimes (j-1)}} \otimes \poscross \otimes 1_{\go^{\otimes (n-j-1)}},
            \\
            1_{\go^{\otimes (j-1)}} \otimes
            \begin{tikzpicture}[centerzero]
                \draw (-0.15,0.25) -- (-0.15,0.2) arc(180:360:0.15) -- (0.15,0.25);
                \draw (-0.15,-0.25) -- (-0.15,-0.2) arc(180:0:0.15) -- (0.15,-0.25);
            \end{tikzpicture}
            \otimes 1_{\go^{\otimes (n-j-1)}}
            &\mapsto - 1_{\go^{\otimes (j-1)}} \otimes
            \begin{tikzpicture}[centerzero]
                \draw (-0.15,0.25) -- (-0.15,0.2) arc(180:360:0.15) -- (0.15,0.25);
                \draw (-0.15,-0.25) -- (-0.15,-0.2) arc(180:0:0.15) -- (0.15,-0.25);
            \end{tikzpicture}
            \otimes 1_{\go^{\otimes (nj-1)}},
        \end{aligned}
    \end{equation}
    for $1 \le j < n$.
\end{lem}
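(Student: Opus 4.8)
The plan is to exhibit the isomorphism as an algebra homomorphism defined on generators, together with an explicit two-sided inverse of the same shape. Write $\mathrm{KT}^\varepsilon_n(z,t,\delta) := \End_{\KS_\varepsilon(D;z,t,\delta)}(\go^{\otimes n})$. By \cite[Th.~7.8]{Tur89} and \cite{Mor10} (or directly from the defining relations of $\FT(D)$ together with $(\text{KS}_\varepsilon)$, $(T)$, and $(D)$), this algebra is generated by $\sigma_j := 1_{\go^{\otimes(j-1)}} \otimes \poscross \otimes 1_{\go^{\otimes(n-j-1)}}$ and $e_j := 1_{\go^{\otimes(j-1)}} \otimes (\uncup \circ \uncap) \otimes 1_{\go^{\otimes(n-j-1)}}$, $1 \le j \le n-1$, subject to relations of three types: (a) the braid relations together with invertibility of the $\sigma_j$ and the ``topological'' relations among the $\sigma_j$, $e_j$ (those of Birman--Murakami--Wenzl type, such as $e_j e_{j\pm1} e_j = e_j$ and $\sigma_j \sigma_{j\pm1} e_j = e_{j\pm1} e_j$), none of which involve the parameters; (b) the framing relation $\sigma_j e_j = e_j \sigma_j = t^{-1} e_j$ coming from $(T)$, and the relation $e_j^2 = \delta e_j$ coming from $(D)$; and (c) the skein relation $(\text{KS}_\varepsilon)$, that is, $\sigma_j + \varepsilon \sigma_j^{-1} = z(1 + \varepsilon e_j)$.

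First I would define $\Phi \colon \mathrm{KT}^{+}_n(z,t,\delta) \to \mathrm{KT}^{-}_n(-iz,it,-\delta)$ on generators by $\Phi(\sigma_j) = i \sigma_j$ and $\Phi(e_j) = -e_j$, as in \cref{headlight}, noting that then $\Phi(\sigma_j^{-1}) = -i\sigma_j^{-1}$. The relations of type (a) are homogeneous with the same $\sigma$-degree and the same $e$-degree on both sides, so the scalars $i^{(\sigma\text{-degree})}(-1)^{(e\text{-degree})}$ introduced by $\Phi$ cancel, and these relations are automatically preserved. For (b), one computes $\Phi(\sigma_j e_j) = (i\sigma_j)(-e_j) = -i\sigma_j e_j$ and uses the target relation $\sigma_j e_j = (it)^{-1} e_j = -i t^{-1} e_j$ to get $\Phi(\sigma_j e_j) = -t^{-1} e_j = \Phi(t^{-1} e_j)$; and $\Phi(e_j^2) = e_j^2 = -\delta e_j = \Phi(\delta e_j)$, using that the third parameter of the target is $-\delta$. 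For (c), one computes $\Phi(\sigma_j + \sigma_j^{-1}) = i(\sigma_j - \sigma_j^{-1})$ and invokes the target skein relation $\sigma_j - \sigma_j^{-1} = (-iz)(1 - e_j)$ to obtain $\Phi(\sigma_j + \sigma_j^{-1}) = z(1 - e_j) = \Phi(z(1 + e_j))$. Hence $\Phi$ is a well-defined algebra homomorphism.

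Next I would write down the inverse by the mirror recipe: $\Psi \colon \mathrm{KT}^{-}_n(-iz,it,-\delta) \to \mathrm{KT}^{+}_n(z,t,\delta)$, $\Psi(\sigma_j) = -i\sigma_j$, $\Psi(e_j) = -e_j$. The same three-step check (now with the roles of $(\text{KS}_+)$ and $(\text{KS}_-)$, and of $z$ and $-iz$, etc., interchanged) shows that $\Psi$ is well defined with target parameters exactly $(z,t,\delta)$. Since $\Phi\Psi$ and $\Psi\Phi$ fix each $\sigma_j$ (because $i \cdot (-i) = 1$) and each $e_j$ (because $(-1)(-1) = 1$), and these generate, both composites are the identity; thus $\Phi$ is an isomorphism.

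The difficulty here is entirely bookkeeping. One must pin down the precise presentation of the Kauffman tangle algebras --- in particular the exact form of the framing relation $\sigma_j e_j = t^{-1} e_j$ (equivalently the cap--crossing identity $\uncap \circ \poscross = t^{-1} \uncap$), where the sign and exponent are the delicate point --- and then track the substitutions $z \mapsto -iz$, $t \mapsto it$, $\delta \mapsto -\delta$ through each relation with correct signs. The skein relation and the framing relation are the only two places where the parameter change does real work; everything else is forced by degree considerations. A secondary point, for which I would lean on \cite[Th.~7.8]{Tur89} and \cite{Mor10}, is to be sure that the listed relations actually present the algebra, so that checking them suffices.
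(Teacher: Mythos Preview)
Your approach is correct and is exactly what the paper does: invoke the standard BMW presentation and verify the relations (the paper's proof says only that the generators are well known and the verification is straightforward). One small slip to fix: your claim that the type-(a) relations have the \emph{same} $\sigma$-degree and $e$-degree on both sides is false for the very example you list, $\sigma_j\sigma_{j\pm1}e_j=e_{j\pm1}e_j$, where the degrees are $(2,1)$ versus $(0,2)$. The conclusion survives because $i^{2}=-1$, so what you actually need is that the scalar $i^{(\sigma\text{-deg})}(-1)^{(e\text{-deg})}$ agrees on both sides---which it does for all the parameter-free BMW relations---but the stated reason should be corrected. You should also be aware that, depending on the presentation you cite, there may be a further $t$-dependent relation $e_j\sigma_{j\pm1}e_j=t^{\pm1}e_j$ (with exponent opposite to that of $\sigma_j e_j=t^{-1}e_j$); it checks out under $\Phi$ by the same computation as your type-(b) check.
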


\begin{proof}
    It is well-known that the elements appearing in \cref{headlight} generate the endomorphism algebras of $\KS_\pm(D;z,t,\delta)$, i.e.\ the BMW algebras in their original and Dubrovnik forms.  Then it is straightforward to verify, using the standard presentation of these algebras, that the given map is an isomorphism.
\end{proof}

Because of the above relationship between the two choices of $\varepsilon$, many authors make the choice that best suits their particular purpose.  In the literature, the two choices are often referred to as ``variants'' of each other.  However, if one is interested in the full monoidal categories, there is no such relationship, as the following result shows.

\begin{prop}
    If $\kk$ is an integral domain of characteristic not equal to two, then there \emph{do not exist any} choices of $z_\pm,t_\pm,\delta_\pm$ for which $\KS_+(D;z_+,t_+,\delta_+)$ and $\KS_-(D;z_-,t_-,\delta_-)$ are isomorphic as monoidal categories.
\end{prop}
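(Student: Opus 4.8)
The plan is to exhibit an invariant of the monoidal category $\KS_\varepsilon(D;z,t,\delta)$ that distinguishes $\varepsilon = 1$ from $\varepsilon = -1$ regardless of the values of the parameters. The natural candidate is the one-dimensional space $\End(\one)$, together with the distinguished element given by a single closed loop (the bubble), which by \cref{D} equals $\delta 1_\one$. However, $\delta$ itself depends on the presentation, so the genuinely intrinsic quantity is something like the scalar attached to a specific small closed diagram, computed two ways. A cleaner route: consider the object $\go \in \Ob(\cC)$ and the endomorphism algebra $\End_\cC(\go^{\otimes 2})$, or better yet the value of the \emph{Kauffman bracket of the Hopf link} (or of the two-component unlink with a single crossing resolved), which is forced to take different values. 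Concretely, I would compute the scalar $\lambda_\varepsilon \in \kk$ determined by closing up the relation $(\text{KS}_\varepsilon)$ into $\End(\one)$: capping the top and bottom of both sides of \cref{KS+} or \cref{KS-} with cups and caps and using \cref{R0}, \cref{T}, \cref{D}, one gets a polynomial identity in $z, t, \delta$ whose \emph{sign structure} differs between the two cases.

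The key steps, in order, are: (i) Show that any monoidal isomorphism $\Phi \colon \KS_+(D;z_+,t_+,\delta_+) \to \KS_-(D;z_-,t_-,\delta_-)$ must send $\go$ to $\go$ (up to the fact that both categories have a single generating object, and the endomorphism algebras $\End(\go^{\otimes n})$ have fixed finite dimensions — the BMW dimensions — that do not depend on parameters, so $\Phi$ preserves the grading by number of strands). (ii) Since $\End(\one)$ is one-dimensional in both categories, $\Phi$ restricts to a $\kk$-algebra isomorphism $\kk \to \kk$, hence is the identity on $\End(\one)$. (iii) The crossing $\poscross$ must map to an invertible element of $\End(\go^{\otimes 2})$ satisfying the image of relation $(\text{KS}_+)$; I would argue that in $\End(\go^{\otimes 2})$, which has a basis $\{1, \poscross, e\}$ where $e$ is the cup-cap (the rank-one idempotent up to scalar), the element $\Phi(\poscross)$ is a $\kk$-linear combination $a \cdot 1 + b \cdot \poscross_- + c \cdot e$, and invertibility plus the braid relations \cref{R2}, \cref{R3} severely constrain it. (iv) Evaluate both sides of $(\text{KS}_+)$ under $\Phi$ inside $\End(\go^{\otimes 2})$ (a 3-dimensional algebra), extract the resulting system of equations in $a, b, c$ and the target parameters, and derive a contradiction over an integral domain of characteristic $\ne 2$ — the obstruction being a sign that cannot be absorbed. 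The cleanest contradiction should come from comparing the coefficient of the identity $1_{\go^{\otimes 2}}$ on the two sides: in $(\text{KS}_+)$ the crossing plus its inverse equals $z(\,1 + e\,)$, whereas the image satisfies the Dubrovnik-type relation with a minus sign, and matching the "quadratic relation" $\poscross - \poscross^{-1} = \pm z(1 - e)$ versus $\poscross + \poscross^{-1} = z(1+e)$ forces $2z = 0$ or a similar degenerate identity.

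I would structure step (iii)–(iv) concretely as follows: in $\KS_\varepsilon$, the crossing $c = \poscross$ satisfies $c - \varepsilon c^{-1} = z(1 - \varepsilon e)$ after rearranging $(\text{KS}_\varepsilon)$, where $e = \uncup \circ \uncap$ composed appropriately, with $e^2 = \delta e$. Multiplying by $c$ gives a quadratic $c^2 = \varepsilon + z c - \varepsilon z (ec)$, and using the twist/curl relations one computes $ec = t^{\pm1} e$ (the value of a curl inside the trivial idempotent). So $c$ satisfies a quadratic $c^2 - z c - (\varepsilon - \varepsilon z t^{\pm1}) = 0$ modulo the $e$-part; its two eigenvalues on the two-dimensional "non-$e$" and the one-dimensional "$e$" isotypic pieces are determined by $\varepsilon, z, t$. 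An isomorphism $\Phi$ would send the generating crossing of $\KS_+$ to an element of $\End_{\KS_-}(\go^{\otimes 2})$ whose eigenvalues on the corresponding isotypic decomposition must match; but the eigenvalue data of a crossing in a Dubrovnik category has the opposite sign pattern ($+t, -t^{-1}, \ldots$ versus $+t, +t^{-1}, \ldots$ roughly), and no rescaling $c \mapsto u c$ by a unit $u \in \kk^\times$ can convert one pattern to the other unless $u^2 = -1$ times something, which over an integral domain of characteristic $\ne 2$ where we have NOT assumed $i \in \kk$ is impossible — and even if $i \in \kk$, the full monoidal structure (not just a single $\End(\go^{\otimes n})$) rigidifies the cups and caps, removing the freedom that the algebra isomorphism of the previous lemma exploited.

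The main obstacle I anticipate is step (i) — showing $\Phi$ must preserve the "number of strands" grading and in particular cannot do something exotic like sending $\go$ to $\go^{\otimes 2}$ or to a non-generating object. This requires knowing that the generating object is characterized intrinsically, e.g.\ as an object $X$ with $\End(X)$ of a specific small dimension such that every object is a tensor power of $X$ and $X$ is not itself a nontrivial tensor product; since both categories have the same skeleton on objects ($\N$ under addition) and $\Phi$ is bijective on objects and additive, $\Phi$ is an automorphism of $(\N, +)$, hence the identity. Once that is pinned down, the rest is the parameter bookkeeping sketched above, and the characteristic-$\ne 2$ hypothesis enters precisely to prevent $1 = -1$ from collapsing the sign obstruction.
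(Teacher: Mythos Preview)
Your outline diverges substantially from the paper's proof, and the part you flag as routine is where the real difficulty lies.

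The paper's argument is short and structural: after observing (as you do) that $\Phi$ is the identity on objects and that $\Hom(\go^{\otimes 2},\one)$ and $\Hom(\one,\go^{\otimes 2})$ are one-dimensional, it introduces the \emph{rotation operator} $\Rot$ on $\End(\go^{\otimes 2})$, defined by tensoring with $1_\go$ on each side and closing with a cap and cup. Because $\Phi(\uncap)=d\,\uncap$ and $\Phi(\uncup)=c\,\uncup$ with $cd=1$, one checks directly that $\Phi\circ\Rot=\Rot\circ\Phi$. Then, using Turaev's basis of $\End(\go^{\otimes 2})$, one sees that $\Rot$ has $+1$-eigenspace of dimension $1$ when $\varepsilon=1$ and dimension $2$ when $\varepsilon=-1$ (rotation swaps $\poscross\leftrightarrow\negcross$ and swaps $1\leftrightarrow e$, so the third basis vector $1-\varepsilon e$ has eigenvalue $-\varepsilon$). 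That ends the proof.

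Your step~(iv) has a genuine gap. You reduce to analysing the eigenvalue pattern of $\Phi(\poscross_+)$ and argue that a rescaling $c\mapsto uc$ cannot match the Dubrovnik pattern without $u^2=-1$. But nothing forces $\Phi(\poscross_+)$ to be a \emph{scalar multiple} of $\poscross_-$: you only say the braid relations ``severely constrain'' the general expression $a\cdot 1+b\cdot\poscross_-+c\cdot e$, without working out what those constraints are. And when $\sqrt{-1}\in\kk$, the preceding lemma tells you that the algebras $\End(\go^{\otimes n})$ on the two sides really \emph{are} isomorphic for every $n$, so no argument living purely inside a single endomorphism algebra can succeed. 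Your remark that the monoidal structure ``rigidifies the cups and caps'' points in the right direction --- indeed it forces $\Phi(e)=cd\cdot e=e$, killing the $e\mapsto -e$ move used in the lemma --- but you never convert that into a contradiction. One could in principle finish along your lines by solving $X+X^{-1}=z_+(1+e)$ for invertible $X$ in the target $3$-dimensional algebra and checking no solution is compatible with the twist relation~\cref{T}, but that is exactly the computation you have not done. The paper's $\Rot$ invariant sidesteps this entirely.
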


\begin{proof}
    Suppose, towards a contradiction, that we have an isomorphism
    \[
        \Psi \colon \KS_+(D;z_+,t_+,\delta_+) \to \KS_-(D;z_-,t_-,\delta_-)
    \]
    of monoidal categories.  Since this induces an isomorphism after we extend the base ring, we may assume that $\kk$ is a field.  By \cite[Th.~7.8]{Tur89}, the morphism space $\Hom(\go^{\otimes 2},\one)$ is one-dimensional (in both categories), spanned by $\uncap$.  Similarly, $\Hom(\one, \go^{\otimes 2})$ is one-dimensional, spanned by $\uncup$.  It follows that $\Psi(\uncap) = d\, \uncap$ and $\Psi(\uncup) = c\, \uncup$ for some $c,d \in \kk$.  Since $\Psi$ must preserve the first equality in \cref{R0}, we conclude that $cd=1$.

    Let $\Rot$ be the linear operator on $\End(\go^{\otimes 2})$ giving by counterclockwise rotation by $90\degree$.   This operation is given by tensoring on the left and right by $\go$, then adding a cap to the top two strands, a cup to the bottom two strands, then using \cref{R0}.  For any $f \in \End_{\KS_+(D;z_+,t_+,\delta_+)}(\go^{\otimes 2})$, we have
    \[
        \Psi(\Rot(f))
        = \Psi
        \left(
            \begin{tikzpicture}[anchorbase]
                \draw (-0.25,0.2) rectangle (0.25,-0.2);
                \node at (0,0) {$\scriptstyle{f}$};
                \draw (-0.4,-0.4) -- (-0.4,0.2) arc (180:0:0.15);
                \draw (0.4,0.4) -- (0.4,-0.2) arc(360:180:0.15);
                \draw (-0.1,-0.4) -- (-0.1,-0.2);
                \draw (0.1,0.4) -- (0.1,0.2);
            \end{tikzpicture}
        \right)
        =
        \begin{tikzpicture}[anchorbase]
            \draw (-0.35,0.2) rectangle (0.35,-0.2);
            \node at (0,0) {$\scriptstyle{\Psi(f)}$};
            \draw (-0.5,-0.4) -- (-0.5,0.2) arc (180:0:0.2);
            \draw (0.5,0.4) -- (0.5,-0.2) arc(360:180:0.2);
            \draw (-0.1,-0.4) -- (-0.1,-0.2);
            \draw (0.1,0.4) -- (0.1,0.2);
        \end{tikzpicture}
        = \Rot(\Psi(f)).
    \]
    In particular, $\Psi$ preserves the eigenspaces of $\Rot$.  However, it follows from \cite[Th.~7.8]{Tur89} that the endomorphism space $\End_{\KS_\varepsilon(D;z_\varepsilon,t_\varepsilon,\delta_\varepsilon)}(\go^{\otimes 2})$ is three-dimensional, with basis
    \[
        \poscross + \negcross,\quad
        \poscross - \negcross,\quad
        \begin{tikzpicture}[centerzero]
            \draw (-0.15,-0.25) -- (-0.15,0.25);
            \draw (0.15,-0.25) -- (0.15,0.25);
        \end{tikzpicture}
        - \varepsilon\
        \begin{tikzpicture}[centerzero]
            \draw (-0.15,0.25) -- (-0.15,0.2) arc(180:360:0.15) -- (0.15,0.25);
            \draw (-0.15,-0.25) -- (-0.15,-0.2) arc(180:0:0.15) -- (0.15,-0.25);
        \end{tikzpicture}
        \ .
    \]
    In particular, the $+1$ eigenspace is one-dimensional for $\varepsilon=1$ and it is two-dimensional for $\varepsilon=-1$.  This contradicts the assumption that $\Psi$ is an isomorphism.
\end{proof}

\begin{prop} \label{dinosaur}
    The affinization $\Aff(\KS_\varepsilon(D;z,t,\delta))$ of the Kauffman skein category over the disc is isomorphic to $\KS_\varepsilon(A;z,t,\delta)$, the Kauffman skein category over the annulus.
\end{prop}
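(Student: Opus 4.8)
The plan is to imitate the proof of \cref{annularbraids}, constructing mutually inverse monoidal functors $F \colon \Aff(\KS_\varepsilon(D;z,t,\delta)) \to \KS_\varepsilon(A;z,t,\delta)$ and $G$ in the reverse direction, both the identity on objects. (An equivalent but more economical argument would first establish the general fact that affinization commutes with imposing relations on the underlying category — which holds because the defining relations \cref{coilrel1,coilrel2} only ever involve morphisms of that category — and then deduce the result from \cref{dodo}, since $\KS_\varepsilon(D;z,t,\delta)$ and $\KS_\varepsilon(A;z,t,\delta)$ are the quotients of $\FT(D)_\kk$ and $\FT(A)_\kk$ by the \emph{local} relations ($\text{KS}_\varepsilon$), \cref{T}, and \cref{D}.) For the direct approach, $F$ sends a framed tangle over $D$ to the same tangle viewed over $A$ via \cref{lasso}, and sends each coil $\xi_{X,Y}$ to the diagram over $A$ — drawn by cutting as in \cref{pizza} — in which the strands labeled $Y$ wrap once around the annulus, passing over the strands labeled $X$ (with $\xi_{X,Y}^{-1}$ wrapping the opposite way). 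One checks that $F$ respects \cref{coilrel1,coilrel2}, which express topologically that a bundle of strands may be slid around the annulus past any tangle and that two successive wrappings of $Y$ and then $Z$ combine into one wrapping of $Y \otimes Z$; that $F$ kills ($\text{KS}_\varepsilon$), \cref{T}, and \cref{D}, as these hold in $\KS_\varepsilon(A;z,t,\delta)$ by definition; and that $F$ intertwines the tensor product of \cref{afftensor} with the nesting-of-cylinders tensor product on $\KS_\varepsilon(A;z,t,\delta)$.

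For $G$, one cuts the annulus along the distinguished arc and isotopes a representative diagram to meet the cut transversely; it then becomes a vertical composite of elementary slices, each of which is either a tangle diagram not meeting the cut — sent to the corresponding morphism of $\KS_\varepsilon(D;z,t,\delta)$, regarded as a subcategory of its affinization via \cref{inject} — or a wrapping slice of the form shown in \cref{spirit}, now with the middle strands labeled by an arbitrary object built from $\go$, sent to $\xi_{\go^{\otimes n},\go}^{\pm 1}$. As in \cref{annularbraids}, relation \cref{coilrel2} guarantees that this assignment is independent of how morphisms of $\KS_\varepsilon(D;z,t,\delta)$ are slid past the cut; ambient isotopies supported in a disc off the cut, together with the imposed skein relations ($\text{KS}_\varepsilon$), \cref{T}, and \cref{D}, are respected because they take place inside such a disc; and isotopies that carry a crossing or a cup/cap across the cut are absorbed via \cref{coilrel1,coilrel2} and the monoidal compatibility \cref{stack}. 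One then verifies $G \circ F = \id$ (clear on generators) and $F \circ G = \id$ (by comparing the two slice decompositions).

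The step that needs the most care — exactly as in the proofs of \cref{kiwi} and \cref{dodo} — is the well-definedness of $G$: one must check that the rewriting of an annular Kauffman diagram as a composite of disc pieces and wrapping slices is unique modulo the relations available in $\Aff(\KS_\varepsilon(D;z,t,\delta))$, namely ambient isotopy applied within the disc pieces together with \cref{coilrel1,coilrel2,stack}. Since the only moves that are genuinely new relative to the disc setting are those that push a fragment of tangle across the cut, and these are precisely the content of \cref{coilrel1,coilrel2} and \cref{stack}, this verification is routine, and the imposed skein relations play no role in it because they are local. This is exactly why the argument can be said to be ``analogous to that of \cref{annularbraids}''.
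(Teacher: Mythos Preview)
Your proposal is correct and is precisely an elaboration of the paper's one-line proof, which simply says the argument is analogous to that of \cref{annularbraids}; you have spelled out the mutually inverse functors $F$ and $G$ and the well-definedness check for $G$ in exactly the way the paper intends. Your parenthetical alternative---deducing the result from \cref{dodo} by observing that affinization commutes with imposing the local relations ($\text{KS}_\varepsilon$), \cref{T}, \cref{D}---is also valid and arguably cleaner, though the paper does not isolate that general principle explicitly.
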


\begin{proof}
    The proof is analogous to that of \cref{annularbraids}.
\end{proof}

By \cref{plain}, $\Aff(\KS_\varepsilon(D;z,t,\delta))$ is the strict monoidal category generated by a single object $\go$, and morphisms
\[
    \poscross,\ \negcross,\ \uncup,\ \uncap,\ \posdotun,\ \negdotun,
\]
subject to the relations \cref{R0}, \cref{FR1}, \cref{R2}, \cref{R3}, ($\text{KS}_\varepsilon$), \cref{T}, \cref{D}, and \cref{UA}.  For the choice $\varepsilon = -1$, the category $\Aff(\KS_-(D;z,t,\delta))$ appeared in \cite[Def.~1.3]{GRS20}, where the authors called it the \emph{affine Kauffman skein category} (although they have made the choice of the Dubrovnik skein relation), and gave a basis theorem for it and its cyclotomic quotients.  The topological interpretation in terms of diagrams on the cylinder does not seem to appear there.  For the choice $\varepsilon=1$, this category does not seem to have been studied yet in the literature.  The properties of the Jucys--Murphy elements do not seem to have been explored in either of the two cases.

\subsection{Temperley--Lieb category\label{subsec:TL}}

For $\delta \in \kk$, the \emph{Temperley--Lieb category} $\TL(\delta)$ is the free $\kk$-linear rigid monoidal category generated by a self-dual object of dimension $\delta$.  It is generated by a single object $\go$, and morphisms $\uncup$ and $\uncap$, subject to the relation \cref{D} and the first two equalities in \cref{R0}.

For the remainder of this section, assume there exists $q \in \kk^\times$ such that
\begin{equation} \label{TLbraiddelt}
    \delta = -q^2 - q^{-2}.
\end{equation}
Then $\TL(\delta)$ is braided with the braiding given by
\begin{equation} \label{TLbraid}
    \begin{tikzpicture}[centerzero]
        \draw (0.3,-0.4) -- (-0.3,0.4);
        \draw[wipe] (-0.3,-0.4) -- (0.3,0.4);
        \draw (-0.3,-0.4) -- (0.3,0.4);
    \end{tikzpicture}
    \ := q\
    \begin{tikzpicture}[centerzero]
        \draw (-0.2,-0.4) -- (-0.2,0.4);
        \draw (0.2,-0.4) -- (0.2,0.4);
    \end{tikzpicture}
    \ + q^{-1}\
    \begin{tikzpicture}[centerzero]
        \draw (-0.2,0.4) -- (-0.2,0.3) arc(180:360:0.2) -- (0.2,0.4);
        \draw (-0.2,-0.4) -- (-0.2,-0.3) arc(180:0:0.2) -- (0.2,-0.4);
    \end{tikzpicture}
    \ ,\qquad
    \begin{tikzpicture}[centerzero]
        \draw (-0.3,-0.4) -- (0.3,0.4);
        \draw[wipe] (0.3,-0.4) -- (-0.3,0.4);
        \draw (0.3,-0.4) -- (-0.3,0.4);
    \end{tikzpicture}
    \ := q^{-1}\
    \begin{tikzpicture}[centerzero]
        \draw (-0.2,-0.4) -- (-0.2,0.4);
        \draw (0.2,-0.4) -- (0.2,0.4);
    \end{tikzpicture}
    \ + q\
    \begin{tikzpicture}[centerzero]
        \draw (-0.2,0.4) -- (-0.2,0.3) arc(180:360:0.2) -- (0.2,0.4);
        \draw (-0.2,-0.4) -- (-0.2,-0.3) arc(180:0:0.2) -- (0.2,-0.4);
    \end{tikzpicture}
    \ ,
\end{equation}
and then extended to arbitrary objects using \cref{braidextend}; see \cite[Prop.~2.6]{BSA18} (note that our $q$ is the $q^{1/2}$ of \cite{BSA18}).  The cups and caps make $\TL(\delta)$ a strict pivotal category, and so we can define a twist as in \cref{looptwist}:
\begin{equation}
    \theta_\go :=
    \begin{tikzpicture}[anchorbase]
    	\draw[-] (0,0.6) to (0,0.3);
    	\draw (0.3,-0.2) to [out=0,in=-90](.5,0);
    	\draw (0.5,0) to [out=90,in=0](.3,0.2);
    	\draw (0,-0.3) to (0,-0.6);
    	\draw (0,0.3) to [out=-90,in=180] (.3,-0.2);
    	\draw[wipe] (0.3,.2) to [out=180,in=90](0,-0.3);
    	\draw (0.3,.2) to [out=180,in=90](0,-0.3);
    \end{tikzpicture}
    =
    - q^3\
    \begin{tikzpicture}[centerzero]
        \draw (0,-0.6) -- (0,0.6);
    \end{tikzpicture}
    \ .
\end{equation}
Extending the twist to a general object $\go^{\otimes n}$ using \cref{swirl} recovers the twist defined in \cite[Prop.~2.10]{BSA18}.

Note that the definition \cref{TLbraid} corresponds precisely to the relation \cref{KB} and its image under clockwise rotation by $90\degree$.   Thus $\TL(-q^2-q^{-2})$ is isomorphic to a quotient of either of the Kauffman skein categories $\KS_\pm(D;q \pm q^{-1},-q^3,-q^2-q^{-2})$ by \cref{KB}.

The category $\TL(-q^2-q^{-2})$ underpins the Jones polynomial just like the oriented skein category underpins the HOMFLYPT polynomial (see \cref{subsec:HOMFLYPT}).  Given an unoriented link $L$, let $\vec{L}$ be an oriented link obtained by choosing some orientation of the components of $L$.  Then there is a unique scalar $V_{\vec{L}}(q) \in \kk$ such that $(-q^3)^{-\writhe(\vec{L})} L = V_{\vec{L}}(q) \delta 1_\one$ in $\TL(-q^2-q^{-2})$.  If $\kk = \Z[q,q^{-1}]$ and $L$ is a knot (in which case the writhe number is independent of the chosen orientation), then $V_L(q)$ is the Jones polynomial of $L$ (a Laurent polynomial in an indeterminate $t$), specialized at $t=q^4$.

We can now apply our general affinization procedure to obtain the \emph{affine Temperley--Lieb category} $\Aff(\TL(\delta))$.  It is the strict monoidal category generated by the object $\go$ and morphisms
\[
    \uncup,\ \uncap,\ \poscross,\ \negcross,\ \posdotun,\ \negdotun,
\]
subject to the relations \cref{R0}, \cref{FR1}, \cref{R2}, \cref{R3}, \cref{KB}, and \cref{UA}. Alternatively, if we want a presentation without crossings, $\Aff(\TL(\delta))$ is the strict monoidal category generated by an object $\go$ and morphisms
\[
    \uncup,\ \uncap,\ \posdotun,\ \negdotun,
\]
subject to the relations from \cref{R0} and \cref{UA} that do not involve crossings, together with the relation
\begin{equation} \label{racoon}
    q\
    \begin{tikzpicture}[centerzero]
        \draw (-0.2,-0.4) -- (-0.2,0.4);
        \draw (0.2,-0.4) -- (0.2,0.4);
        \posdot{0.2,0};
    \end{tikzpicture}
    \ - q^{-1}\
    \begin{tikzpicture}[centerzero]
        \draw (-0.2,-0.4) -- (-0.2,0.4);
        \draw (0.2,-0.4) -- (0.2,0.4);
        \posdot{-0.2,0};
    \end{tikzpicture}
    \ = q\
    \begin{tikzpicture}[centerzero]
        \draw (-0.2,0.5) -- (-0.2,0.3) arc(180:360:0.2) -- (0.2,0.5);
        \draw (-0.2,-0.5) -- (-0.2,-0.3) arc(180:0:0.2) -- (0.2,-0.5);
        \posdot{-0.2,-0.3};
    \end{tikzpicture}
    \ - q^{-1}\
    \begin{tikzpicture}[centerzero]
        \draw (-0.2,0.5) -- (-0.2,0.3) arc(180:360:0.2) -- (0.2,0.5);
        \draw (-0.2,-0.5) -- (-0.2,-0.3) arc(180:0:0.2) -- (0.2,-0.5);
        \posdot{0.2,0.3};
    \end{tikzpicture}
    \ .
\end{equation}
The affine Temperley--Lieb category was introduced in \cite[Def.~2.5]{GL98}, where the morphism spaces were defined to be linear combinations of Temperley--Lieb diagrams on the cylinder.  (The $q$ of \cite{GL98} is our $q^2$.)  We were not able to find the above presentation of the affine Temperley--Lieb category, with the relation \cref{racoon}, in the literature.

One can also form the affine Temperley--Lieb category $\Aff(\TL(\delta))$ for general $\delta$, not necessarily of the form \cref{TLbraiddelt}.  However, one then loses the braided monoidal structure on $\TL(\delta)$.  Thus we cannot use the presentation coming from \cref{plain}, and we lose the monoidal structure on $\Aff(\TL(\delta))$.

\section{Relation to the horizontal trace\label{sec:htr}}

In several places in the literature, the \emph{horizontal trace} is used as the formal concept embodying the idea of monoidal categories on annuli or cylinders.  For example, the horizontal trace is defined in \cite[\S2.4]{BHLZ17}, where it is stated that the horizontal trace of $\cC$ can be naturally regarded as the category of $\cC$-diagrams on the annulus \emph{when $\cC$ admits biadjoints}.  A particular instance of this can be found in \cite[Prop.~4.4]{CK18}, where the authors identify the annular spider category with the horizontal trace of the usual spider category.

In this section, we give a precise relationship between the horizontal trace and the affinization of a monoidal category.  The two are different in general, but are isomorphic \emph{when $\cC$ is rigid}.  This explains why the horizontal trace corresponds to diagrams on the annulus for rigid categories.  For example, the spider category considered in \cite{CK18} is rigid.  However, when $\cC$ is \emph{not} rigid, it is the affinization, and not the horizontal trace, that naturally corresponds to $\cC$-diagrams on the annulus.  Below we give examples to illustrate this distinction.

Suppose $\cC$ is an essentially small monoidal category.  Fix two objects $X,Y$ in $\cC$, and consider pairs $(Z,f)$, where $Z$ is an object of $\cC$ and $f \colon X \otimes Z \to Z \otimes Y$ is a morphism in $\cC$.  We define an equivalence relation on such pairs generated by the relations
\begin{equation} \label{htrel}
    (Z, f \circ (1_X \otimes g)) \sim (Z', (g \otimes 1_Y) \circ f), \quad
    g \in \Hom_\cC(Z,Z'),\ f \in \Hom_\cC(X \otimes Z', Z \otimes Y).
\end{equation}
The \emph{horizontal trace} $\htr(\cC)$ of $\cC$ is the category with the same objects as $\cC$ and with $\Hom_{\htr(\cC)}(X,Y)$ the set of equivalence classes $[Z,f]$ of such pairs $(Z,f)$.  Composition of morphisms is given by
\begin{equation} \label{htcomp}
    [Z', g \colon X \otimes Z' \to Z' \otimes Y]
    \circ [Z, f \colon W \otimes Z \to Z \otimes X]
    := [Z \otimes Z', (1_Z \otimes g) \circ (f \otimes 1_{Z'})].
\end{equation}

If $\cC$ is a braided strict monoidal category, then $\htr(\cC)$ is a strict monoidal category, where the tensor product of objects is the same as in $\cC$, and the tensor product of morphisms is given by
\begin{multline} \label{fly}
    [Z_1, f_1 \colon X_1 \otimes Z_1 \to Z_1 \otimes Y_1]
    \otimes [Z_2, f_2 \colon X_2 \otimes Z_2 \to Z_2 \otimes Y_2]
    \\
    := [Z_1 \otimes Z_2, (1_{Z_1} \otimes \beta_{Y_1,Z_2} \otimes 1_{Y_2}) \circ (f_1 \otimes f_2) \circ (1_{X_1} \otimes \beta_{Z_1,X_2}^{-1} \otimes 1_{Z_2})].
\end{multline}

\begin{rem}
    In the case that $\cC$ is a braided monoidal category, a formula for a tensor product on $\htr(\cC)$ is asserted in \cite[\S2.4.2]{CK18}.  However, the tensor product given there does \emph{not} satisfy the properties required of a tensor product in a braided monoidal category.  For example, when $\cC$ is the category of braids over the disc (see \cref{subsec:braids}) with generating object $\go$ and $f = \eta_\go \circ \epsilon_\go$, then $(f \otimes 1_\go) \circ (1_\go \otimes f) \ne f \otimes f \ne (1_\go \otimes f) \circ (f \otimes 1_\go)$ with the tensor product of \cite[\S2.4.2]{CK18}.
\end{rem}

\begin{rem} \label{red}
    It can be helpful to visualize the morphism $[Z,f]$, $f \colon X \otimes Z \to Z \otimes Y$, as the following diagram on the cylinder:
    \begin{equation} \label{herring}
        \begin{tikzpicture}[anchorbase]
            \identify{-0.6}{-0.6}{0.6}{0.6};
            \draw[rounded corners] (-0.3,-0.2) rectangle (0.3,0.2);
            \node at (0,0) {$\scriptstyle{f}$};
            \draw (-0.15,-0.6) node[anchor=north] {\dotlabel{X}} -- (-0.15,-0.2);
            \draw (0.2,-0.2) arc(180:270:0.15) to[out=0,in=225] (0.6,0) node[anchor=west] {\dotlabel{Z}};
            \draw (-0.6,0) to[out=45,in=180] (-0.35,0.35) arc(90:0:0.15);
            \draw (0.2,0.2) -- (0.2,0.6) node[anchor=south] {\dotlabel{Y}};
        \end{tikzpicture}
    \end{equation}
    where, as usual, we identify the vertical edges.  The relation \cref{htrel} then corresponds to the equality
    \begin{equation}
        \begin{tikzpicture}[anchorbase]
            \identify{-0.6}{-1}{0.7}{0.6};
            \draw[rounded corners] (-0.3,-0.2) rectangle (0.3,0.2);
            \node at (0,0) {$\scriptstyle{f}$};
            \draw (-0.2,-1) node[anchor=north] {\dotlabel{X}} -- (-0.2,-0.2);
            \draw (0.2,-0.2) -- (0.2,-0.7) arc(180:270:0.15) to[out=0,in=225] (0.7,0) node[anchor=west] {\dotlabel{Z}};
            \coupon{0.2,-0.5}{g};
            \draw (-0.6,0) to[out=45,in=180] (-0.35,0.35) arc(90:0:0.15);
            \draw (0.2,0.2) -- (0.2,0.6) node[anchor=south] {\dotlabel{Y}};
        \end{tikzpicture}
        =
        \begin{tikzpicture}[anchorbase]
            \identify{-0.7}{-0.6}{0.6}{1};
            \draw[rounded corners] (-0.3,-0.2) rectangle (0.3,0.2);
            \node at (0,0) {$\scriptstyle{f}$};
            \draw (-0.2,-0.6) node[anchor=north] {\dotlabel{X}} -- (-0.2,-0.2);
            \draw (-0.2,0.2) -- (-0.2,0.7) arc(0:90:0.15) to[out=180,in=45] (-0.7,0);
            \coupon{-0.2,0.5}{g};
            \draw (0.6,0) node[anchor=west] {\dotlabel{Z'}} to[out=225,in=0] (0.35,-0.35) arc(270:180:0.15);
            \draw (0.2,0.2) -- (0.2,1) node[anchor=south] {\dotlabel{Y}};
        \end{tikzpicture}
        \ ,
    \end{equation}
    where we think of sliding the coupon $g$ around the cylinder.  However, in general, \emph{one should not view this as a precise string diagram}.  The reason is that the cups and caps drawn above do not have any precise meaning, since our category may not be rigid!  (This is why we do not orient the strands in \cref{herring}.)  This observation is essential, since it explains the difference between the affinization and horizontal trace constructions.  As we will see below, for non-rigid categories the affinization and horizontal trace differ in general.  We will see in \cref{donut} that if our category is rigid then the affinization and horizontal trace agree, essentially because we can view \cref{herring} as a string diagram, with the cup and cap being a unit and counit morphism as in \cref{rcps}.
\end{rem}

\begin{rem} \label{ht-annulus}
    One also sometimes sees the morphism $[Z,f]$, $f \colon X \otimes Z \to Z \otimes Y$, drawn as a diagram on the annulus:
    \[
        \begin{tikzpicture}[centerzero]
            \draw[densely dotted,green!40!black] (0.2,0) arc(0:360:0.2);
            \draw[densely dotted,green!40!black] (1.2,0) arc(0:360:1.2);
            \draw[rounded corners] (-0.3,0.5) rectangle (0.3,0.9);
            \node at (0,0.7) {$\scriptstyle{f}$};
            \draw (-0.15,0.5) to[out=down,in=up] (0,0.2);
            \draw (0.15,0.9) to[out=up,in=down] (0,1.2);
            \draw (0.15,0.5) to[out=down,in=up] (0.7,0) arc(360:180:0.7) to[out=up,in=down] (-0.4,0.9) arc(180:0:0.125);
        \end{tikzpicture}
    \]
    However, the same pitfalls are present here as they are with the cylindrical depiction of \cref{red}, since the cups and caps appearing in this diagram do not have a precise meaning in general.  See also \cref{cylinder-vs-annulus}.
\end{rem}

\begin{theo} \label{donut}
    \begin{enumerate}
        \item If $\cC$ is right rigid, then there is a functor $\Theta \colon \Aff(\cC) \to \htr(\cC)$ that is the identity on objects and is given on morphisms by
            \begin{equation} \label{donut1}
                f \mapsto [\one,f],\quad
                \xi_{X,Y}
                \mapsto [Y^\vee, \eta_Y \otimes 1_X \otimes \epsilon_Y],\quad
                \xi_{X,Y}^{-1} \mapsto [Y,1_Y \otimes 1_X \otimes 1_Y],
            \end{equation}
            for morphisms $f$ in $\cC$ and objects $X,Y$ in $\cC$.

        \item If $\cC$ is left rigid, then there is a functor $\Theta' \colon \htr(\cC) \to \Aff(\cC)$ that is the identity on objects and is given on morphisms by
            \begin{equation} \label{donut2}
                [Z,f] \mapsto (\epsilon'_Z \otimes 1_Y) \circ (1_{\leftdual{Z}} \otimes f) \circ \xi_{X \otimes Z, \leftdual{Z}} \circ (1_X \otimes \eta'_Z)
                =
                \begin{tikzpicture}[anchorbase]
                    \identify{-0.7}{-0.9}{0.7}{0.6};
                    \draw[rounded corners] (-0.3,-0.2) rectangle (0.3,0.2);
                    \node at (0,0) {$\scriptstyle{f}$};
                    \draw[->] (-0.2,-0.9) node[anchor=north] {\dotlabel{X}} -- (-0.2,-0.2);
                    \draw[<-] (0.2,-0.2) -- (0.2,-0.7) arc(180:270:0.15) to[out=0,in=225] (0.7,-0.4) node[anchor=west] {\dotlabel{Z}};
                    \draw[<-] (-0.7,-0.4) node[anchor=east] {\dotlabel{Z}} to[out=45,in=270] (-0.5,0.2) arc(180:0:0.15);
                    \draw[->] (0.2,0.2) -- (0.2,0.6) node[anchor=south] {\dotlabel{Y}};
                \end{tikzpicture}
            \end{equation}
            for objects $Z$ in $\cC$ and $f \in \Hom_\cC(X \otimes Z, Z \otimes Y)$.

        \item \label{frosting} If $\cC$ is rigid, then the functors $\Theta$ and $\Theta'$ are mutually inverse.  Hence the horizontal trace $\htr(\cC)$ and the affinization $\Aff(\cC)$ are isomorphic.

        \item If $\cC$ is braided and rigid, then the functors $\Theta$ and $\Theta'$ are mutually inverse isomorphisms of monoidal categories.
    \end{enumerate}
\end{theo}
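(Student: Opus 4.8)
The plan is to establish the four parts in turn, reducing each to local computations with the structural data of $\htr(\cC)$ and $\Aff(\cC)$. For part (a), I would invoke the universal property of $\Aff(\cC)$ from \cref{affdef}: to give a functor $\Aff(\cC) \to \htr(\cC)$ it is enough to give the canonical functor $\cC \to \htr(\cC)$, $f \mapsto [\one,f]$, together with invertible morphisms $\Xi_{X,Y} \in \Hom_{\htr(\cC)}(X \otimes Y, Y \otimes X)$ satisfying \cref{coilrel1,coilrel2}. Taking $\Xi_{X,Y} = [Y^\vee, \eta_Y \otimes 1_X \otimes \epsilon_Y]$, one checks first that $[Y, 1_{Y \otimes X \otimes Y}]$ is a two-sided inverse: the composite computed via \cref{htcomp} is $[Y \otimes Y^\vee,\, 1_Y \otimes \eta_Y \otimes 1_X \otimes \epsilon_Y]$, which collapses to $[\one, 1_{Y \otimes X}]$ after one use of the equivalence relation \cref{htrel} (with $g = \epsilon_Y$) followed by the right zigzag \cref{zigright}, and symmetrically on the other side; and second that the coil relations \cref{coilrel1,coilrel2} translate into comparable short manipulations with \cref{htrel,htcomp}. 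Functoriality of $\Theta$ is then automatic.

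For part (b), the crux is that \cref{donut2} is well defined, i.e.\ respects the generating relation \cref{htrel}. Given $g \colon Z \to Z'$, I would show the two resulting composites in $\Aff(\cC)$ coincide by ``sliding the coupon $g$ around the cylinder'': rewrite $g$ on the inner strand as its left mate $\leftdual g$ on the wrapping strand using \cref{zigleft}, pull $\leftdual g$ past the coil using the naturality relation \cref{coilrel2}, and reabsorb it on the other side. Compatibility of $\Theta'$ with composition \cref{htcomp} is a longer diagram chase of the same type, in which the two wrapping strands occurring in $[Z \otimes Z',-]$ are merged via \cref{coilrel1}.

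For part (c) I would verify $\Theta' \circ \Theta = \id_{\Aff(\cC)}$ on the generators of $\Aff(\cC)$. On a morphism $f$ of $\cC$ one gets $\Theta'([\one,f]) = f$ since one may take $\leftdual\one = \one$ with $\eta'_\one = \epsilon'_\one = 1_\one$ and $\xi_{X,\one} = 1_X$ by \cref{cable}; on a coil $\xi_{X,Y}$, substituting $\Theta(\xi_{X,Y}) = [Y^\vee, \eta_Y \otimes 1_X \otimes \epsilon_Y]$ into \cref{donut2} and using that in a rigid category $\leftdual{(Y^\vee)}$ is canonically the original object $Y$, the cups and caps introduced by the two functors cancel by \cref{zigright,zigleft}, leaving $\xi_{X,Y}$. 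Conversely, $\Theta \circ \Theta' = \id_{\htr(\cC)}$ is checked on a class $[Z,f]$: applying $\Theta$ to \cref{donut2} sends the morphisms of $\cC$ to $[\one,-]$'s and the single coil $\xi_{X \otimes Z, \leftdual Z}$ to $[(\leftdual Z)^\vee,-]$, canonically $[Z,-]$; reassembling via \cref{htcomp} and simplifying with the zigzag identities and \cref{htrel} recovers $[Z,f]$.

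For part (d), since $\Theta$ and $\Theta'$ are mutually inverse isomorphisms of categories by part (c), it suffices to show that one of them, say $\Theta'$, preserves tensor products: $\Theta'([Z_1,f_1] \otimes [Z_2,f_2]) = \Theta'([Z_1,f_1]) \otimes \Theta'([Z_2,f_2])$. I would expand the left-hand side using the tensor product formula \cref{fly} for $\htr(\cC)$ followed by \cref{donut2}, and the right-hand side using \cref{donut2} twice followed by the tensor product on $\Aff(\cC)$ from \cref{afftensor} (in particular \cref{stack}); both sides then become explicit composites of coils, braidings, and cup/cap morphisms, which I would match using \cref{coilrel1,coilrel2}, the braid relations, and naturality of $\beta$. \emph{I expect this final matching to be the main obstacle}: one must track how the braidings $\beta_{Y_1,Z_2}$ and $\beta_{Z_1,X_2}^{-1}$ in \cref{fly}, together with the two coils $\xi_{X_i \otimes Z_i,\, \leftdual{Z_i}}$, recombine into the single coil $\xi_{X_1 \otimes Z_1 \otimes X_2 \otimes Z_2,\ \leftdual{(Z_1 \otimes Z_2)}}$ produced on the $\Aff(\cC)$ side, and getting the order of strands exactly right is where the bookkeeping is most delicate.
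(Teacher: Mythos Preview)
Your strategy for parts (a)--(c) is essentially the paper's own: check the coil relations and the \cref{htrel} relation by the same local manipulations you describe, and verify the two composites on generators of $\Aff(\cC)$. (The paper actually computes $\Theta'\circ\Theta$ on $\xi_{X,Y}^{-1}$ rather than on $\xi_{X,Y}$, which is slightly cleaner since $\Theta(\xi_{X,Y}^{-1})=[Y,1_{Y\otimes X\otimes Y}]$ involves no duals; but your version is fine.)

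For part (d) you diverge from the paper, and your choice makes the obstacle you flag real. You propose to show that $\Theta'$ is monoidal by expanding $\Theta'([Z_1,f_1]\otimes[Z_2,f_2])$ via \cref{fly} and comparing to $\Theta'([Z_1,f_1])\otimes\Theta'([Z_2,f_2])$ via \cref{stack}; this is doable but, as you anticipate, requires carefully merging two coils and two braidings into one coil with wrapping object $\leftdual{(Z_1\otimes Z_2)}$. The paper instead checks that $\Theta$ is monoidal, which is much lighter: since $\Aff(\cC)$ is generated under composition by morphisms of $\cC$ and the coils, it suffices to verify $\Theta(f)\otimes\Theta(1_Z)=\Theta(f\otimes 1_Z)$ and $\Theta(1_Z)\otimes\Theta(f)=\Theta(1_Z\otimes f)$ for $f$ a morphism of $\cC$ (trivial) and for $f=\xi_{X,Y}$ (a short computation using \cref{fly}, one instance of \cref{htrel}, and \cref{stack}). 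The asymmetry is that $\Aff(\cC)$ has explicit generators while $\htr(\cC)$ does not, so checking monoidality on the $\Aff(\cC)$ side avoids the bookkeeping you were worried about.
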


\begin{proof}
    \begin{enumerate}[wide]
        \item Suppose $\cC$ is right rigid.  To verify that $\Theta$ is well defined, we must show that the given images of $\xi_{X,Y}$ and $\xi_{X,Y}^{-1}$ are inverses of each other, and that $\Theta$ preserves the relations \cref{coilrel1,coilrel2}.  For the first assertion, we compute
            \begin{multline*}
                [Y^\vee, \eta_Y \otimes 1_X \otimes \epsilon_Y] \circ
                [Y, 1_Y \otimes 1_X \otimes 1_Y]
                \\
                = [Y \otimes Y^\vee, (1_Y \otimes \eta_Y \otimes 1_X \otimes \epsilon_Y) \circ (1_Y \otimes 1_X \otimes 1_Y \otimes 1_{Y^\vee})]
                \\
                = [Y \otimes Y^\vee, 1_Y \otimes \eta_Y \otimes 1_X \otimes \varepsilon_Y]
                \overset{\cref{htrel}}{=} [\one, (\epsilon_Y \otimes 1_Y) \circ (1_Y \otimes \eta_Y) \otimes 1_X]
                \overset{\cref{zigright}}{=} [\one, 1_Y \otimes 1_X],
            \end{multline*}
            where we used the interchange law in the second equality.  A similar computation shows that
            \[
                [Y, 1_Y \otimes 1_X \otimes 1_Y] \circ
                [Y^\vee, (\eta_Y \otimes 1_X) \circ (1_X \otimes \epsilon_Y)]
                = [\one, 1_X \otimes 1_Y].
            \]
            Thus the images of $\xi_{X,Y}$ and $\xi_{X,Y}^{-1}$ are inverse, as desired.

            Next, note that
            \begin{multline*}
                \Theta(\xi_{X \otimes Y,Z}^{-1}) \circ \Theta(\xi_{Z \otimes X,Y}^{-1})
                = [Z, 1_Z \otimes 1_X \otimes 1_Y \otimes 1_Z] \circ [Y, 1_Y \otimes 1_Z \otimes 1_X \otimes 1_Y]
                \\
                = [Y \otimes Z, 1_Y \otimes 1_Z \otimes 1_X \otimes 1_Y \otimes 1_Z]
                = \Theta(\xi_{X,Y \otimes Z}^{-1}).
            \end{multline*}
            Taking inverses shows that $\Theta$ preserves \cref{coilrel1}.  Next, for $f \in \Hom_\cC(Y_1,Y_2)$ and $g \in \Hom_\cC(X_1,X_2)$, we compute
            \begin{multline*}
                \Theta(\xi_{X_2,Y_2}^{-1}) \circ \Theta(f \otimes g)
                = [Y_2, 1_{Y_2} \otimes 1_{X_2} \otimes 1_{Y_2}] \circ [\one,f \otimes g]
                = [Y_2, f \otimes g \otimes 1_{Y_2}]
                \\
                \overset{\cref{htrel}}{=} [Y_1, 1_{Y_1} \otimes g \otimes f]
                = [\one, g \otimes f] \circ [Y_1, 1_{Y_1} \otimes 1_{X_1} \otimes 1_{Y_1}]
                = \Theta(g \otimes f) \circ \Theta(\xi_{X_1,Y_1}^{-1}),
            \end{multline*}
            and so $\Theta$ preserves \cref{coilrel2}.

        \item Suppose $\cC$ is left rigid.  To verify that the functor $\Theta'$ is well-defined, suppose $g \in \Hom_\cC(Z,Z')$ and $f \in \Hom_\cC(X \otimes Z', Z \otimes Y)$.  Then
            \begin{multline*}
                \Theta'([Z, f \circ (1_X \otimes g)])
                =
                \begin{tikzpicture}[anchorbase]
                    \identify{-0.7}{-1.2}{0.7}{0.6};
                    \draw[rounded corners] (-0.3,-0.2) rectangle (0.3,0.2);
                    \node at (0,0) {$\scriptstyle{f}$};
                    \draw[->] (-0.2,-1.2) node[anchor=north] {\dotlabel{X}} -- (-0.2,-0.2);
                    \draw[<-] (0.2,-0.2) -- (0.2,-0.9) arc(180:270:0.15) to[out=0,in=225] (0.7,-0.7) node[anchor=west] {\dotlabel{Z}};
                    \coupon{0.2,-0.55}{g};
                    \draw[<-] (-0.7,-0.7) to[out=45,in=270] (-0.5,0.2) arc(180:0:0.15);
                    \draw[->] (0.2,0.2) -- (0.2,0.6) node[anchor=south] {\dotlabel{Y}};
                \end{tikzpicture}
                =
                \begin{tikzpicture}[anchorbase]
                    \identify{-0.7}{-1.2}{1.2}{0.6};
                    \draw[rounded corners] (-0.3,-0.2) rectangle (0.3,0.2);
                    \node at (0,0) {$\scriptstyle{f}$};
                    \draw[->] (-0.2,-1.2) node[anchor=north] {\dotlabel{X}} -- (-0.2,-0.2);
                    \draw[<-] (0.2,-0.2) -- (0.2,-0.9) arc(180:360:0.15) -- (0.5,-0.5) arc(180:0:0.15) -- (0.8,-0.9) arc(180:270:0.15) to[out=0,in=225] (1.2,-0.3) node[anchor=west] {\dotlabel{Z}};
                    \coupon{0.8,-0.7}{g};
                    \draw[<-] (-0.7,-0.7) to[out=45,in=270] (-0.5,0.2) arc(180:0:0.15);
                    \draw[->] (0.2,0.2) -- (0.2,0.6) node[anchor=south] {\dotlabel{Y}};
                \end{tikzpicture}
                \overset{\cref{whip}}{=}
                \begin{tikzpicture}[anchorbase]
                    \identify{-1.3}{-1}{0.7}{1.2};
                    \draw[rounded corners] (-0.3,-0.2) rectangle (0.3,0.2);
                    \node at (0,0) {$\scriptstyle{f}$};
                    \draw[->] (-0.2,-1) node[anchor=north] {\dotlabel{X}} -- (-0.2,-0.2);
                    \draw[<-] (0.2,-0.2) -- (0.2,-0.7) arc(180:270:0.15) to[out=0,in=225] (0.7,-0.5) node[anchor=west] {\dotlabel{Z}};
                    \draw[<-] (-1.3,-0.5) to[out=45,in=270] (-1.1,0.9) arc(180:0:0.15) -- (-0.8,0.5) arc(180:360:0.15) -- (-0.5,0.9) arc(180:0:0.15) -- (-0.2,0.2);
                    \coupon{-0.8,0.7}{g};
                    \draw[->] (0.2,0.2) -- (0.2,1.2) node[anchor=south] {\dotlabel{Y}};
                \end{tikzpicture}
                =
                \begin{tikzpicture}[anchorbase]
                    \identify{-0.7}{-1.2}{0.7}{0.6};
                    \draw[rounded corners] (-0.3,-0.5) rectangle (0.3,-0.1);
                    \node at (0,-0.3) {$\scriptstyle{f}$};
                    \draw[->] (-0.2,-1.2) node[anchor=north] {\dotlabel{X}} -- (-0.2,-0.5);
                    \draw[<-] (0.2,-0.5) -- (0.2,-0.9) arc(180:270:0.15) to[out=0,in=225] (0.7,-0.7) node[anchor=west] {\dotlabel{Z}};
                    \draw[<-] (-0.7,-0.7) to[out=45,in=270] (-0.5,0.3) arc(180:0:0.15) -- (-0.2,-0.1);
                    \coupon{-0.2,0.15}{g};
                    \draw[->] (0.2,-0.1) -- (0.2,0.6) node[anchor=south] {\dotlabel{Y}};
                \end{tikzpicture}
                \\
                =
                \Theta'([Z, (g \otimes 1_Y) \circ f)]).
            \end{multline*}

        \item Now suppose $\cC$ is rigid.  We check that the functors $\Theta$ and $\Theta'$ are mutually inverse.  For $f \in \Hom_\cC(X,Y)$, we have
            \[
                \Theta' \circ \Theta(f)
                = \Theta'([\one,f])
                = f \circ \xi_{X,\one}
                \overset{\cref{cable}}{=} f.
            \]
            For objects $X,Y$ in $\cC$, we have
            \[
                \Theta' \circ \Theta(\xi_{X,Y}^{-1})
                = \Theta' ([Y, 1_Y \otimes 1_X \otimes 1_Y])
                =
                \begin{tikzpicture}[centerzero]
                    \identify{-0.6}{-0.6}{0.6}{0.6};
                    \draw[->] (-0.2,-0.6) -- (-0.2,0.2) arc(0:90:0.15) to[out=180,in=45] (-0.6,0) node[anchor=east] {\dotlabel{Y}};
                    \draw[->] (0,-0.6) node[anchor=north] {\dotlabel{X}} -- (0,0.6);
                    \draw[<-] (0.2,0.6) -- (0.2,-0.2) arc(180:270:0.15) to[out=0,in=225] (0.6,0) node[anchor=west] {\dotlabel{Y}};
                \end{tikzpicture}
                \overset{\cref{whip}}{=}
                \begin{tikzpicture}[centerzero]
                    \identify{-0.6}{-0.6}{0.6}{0.6};
                    \draw[->] (0,-0.6) node[anchor=north] {\dotlabel{X}} to[out=up,in=down] (-0.3,0.6);
                    \draw[->] (-0.4,-0.6) node[anchor=north] {\dotlabel{Y}} to[out=90,in=-45] (-0.6,-0.2);
                    \draw[<-] (0,0.6) -- (0,0.3) arc(180:360:0.1) arc (180:0:0.1) to[out=-90,in=135] (0.6,-0.2) node[anchor=west] {\dotlabel{Y}};
                \end{tikzpicture}
                =
                \begin{tikzpicture}[anchorbase]
                    \identify{-0.7}{-0.5}{0.7}{0.5};
                    \draw[->] (0.3,-0.5) node[anchor=north] {\dotlabel{X}} \braidup (-0.3,0.5);
                    \draw[->] (-0.3,-0.5) node[anchor=north] {\dotlabel{Y}} to[out=up,in=-20] (-0.7,0);
                    \draw[->] (0.7,0) to[out=160,in=down] (0.3,0.5) node[anchor=south] {\dotlabel{Y}};
                \end{tikzpicture}
                = \xi_{X,Y}^{-1}.
            \]
            Thus $\Theta' \circ \Theta$ is the identity functor.  It is straightforward to verify that $\Theta \circ \Theta'$ is also the identity.

        \item Suppose $\cC$ is braided and rigid.  By part~\ref{frosting}, it suffices to show that the functor $\Theta$ is a monoidal functor.  For morphisms $f,g$ in any monoidal category, we have
            \[
                f \otimes g = (f \otimes 1) \circ (1 \otimes g).
            \]
            Thus, it suffices to show that $\Theta(f) \otimes \Theta(1_Z) = \Theta(f \otimes 1_Z)$ and $\Theta(1_Z) \otimes \Theta(f) = \Theta(1_Z \otimes f)$ for all objects $Z$ and morphisms $f$ in $\Aff(\cC)$.  For $f \in \Hom_\cC(X,Y)$ and $Z \in \Ob(\cC)$, we have
            \[
                \Theta(f) \otimes \Theta(1_Z)
                = [\one,f] \otimes [\one,1_Z]
                = [\one, \beta_{Y,\one} \circ f \circ (1_X \otimes \beta_{\one,\one}^{-1})]
                = [\one,f]
                = \Theta(f \otimes 1_Z).
            \]
            Similarly, $\Theta(1_Z) \otimes \Theta(f) = \Theta(1_Z \otimes f)$.  For $X,Y,Z \in \Ob(\cC)$, we have
            \begin{align*}
                \Theta(\xi_{X,Y}) \otimes \Theta(1_Z)\
                &\overset{\mathclap{\cref{donut1}}}{=}\ [Y^\vee, \eta_Y \otimes 1_X \otimes \epsilon_Y] \otimes [\one, 1_Z]
                \\
                &\overset{\mathclap{\cref{fly}}}{=}\ [Y^\vee, (1_{Y^\vee} \otimes \beta_{Y \otimes X, \one} \otimes 1_Z) \circ (\eta_Y \otimes 1_X \otimes \epsilon_Y \otimes 1_Z) \circ (1_X \otimes 1_Y \otimes \beta_{Y^\vee,Z}^{-1})]
                \\
                &= [Y^\vee, (\eta_Y \otimes 1_X \otimes \epsilon_Y \otimes 1_Z) \circ (1_X \otimes 1_Y \otimes \beta_{Y^\vee,Z}^{-1})]
                \\
                &\overset{\mathclap{\cref{R0}}}{=}\ [Y^\vee, (\eta_Y \otimes 1_X \otimes 1_Z \otimes \epsilon_Y) \circ (1_X \otimes \beta_{Y,Z} \otimes 1_{Y^\vee})]
                \\
                &\overset{\mathclap{\cref{htcomp}}}{=}\ [Y^\vee, \eta_Y \otimes 1_X \otimes 1_Z \otimes \epsilon_Y] \circ [\one, 1_X \otimes \beta_{Y,Z}]
                \\
                &\overset{\mathclap{\cref{donut1}}}{=}\ \Theta(\xi_{X \otimes Z,Y} \circ (1_X \otimes \beta_{Y,Z}))
                \\
                &\overset{\mathclap{\cref{stack}}}{=}\ \Theta(\xi_{X,Y} \otimes 1_Z)
            \end{align*}
            and
            \begin{align*}
                \Theta(1_X) \otimes \Theta(\xi_{Y,Z})\
                &\overset{\mathclap{\cref{donut1}}}{=}\ [\one, 1_X] \otimes [Z^\vee, \eta_Z \otimes 1_Y \otimes \epsilon_Z]
                \\
                &\overset{\mathclap{\cref{fly}}}{=}\ [Z^\vee, (\beta_{X,Z^\vee} \otimes 1_Z \otimes 1_Y) \circ (1_X \otimes \eta_Z \otimes 1_Y \otimes \epsilon_Z) \circ (1_X \otimes \beta_{\one,Y \otimes Z}^{-1} \otimes 1_{Z^\vee})]
                \\
                &= [Z^\vee, (\beta_{X,Z^\vee} \otimes 1_Z \otimes 1_Y) \circ (1_X \otimes \eta_Z \otimes 1_Y \otimes \epsilon_Z)]
                \\
                &\overset{\mathclap{\cref{R0}}}{=}\ [Z^\vee, (1_{Z^\vee} \otimes \beta_{X,Z}^{-1} \otimes 1_Y) \circ (\eta_Z \otimes 1_X \otimes 1_Y \otimes \epsilon_Z)]
                \\
                &\overset{\mathclap{\cref{htcomp}}}{=}\ [\one, \beta_{X,Z}^{-1} \otimes 1_Y] \circ [Z^\vee, \eta_Z \otimes 1_X \otimes 1_Y \otimes \epsilon_Z]
                \\
                &\overset{\mathclap{\cref{donut1}}}{=}\  \Theta ((\beta_{X,Z}^{-1} \otimes 1_Y) \circ \xi_{X \otimes Y,Z})
                \\
                &\overset{\mathclap{\cref{stack}}}{=}\ \Theta(1_X \otimes \xi_{Y,Z}). \qedhere
            \end{align*}
    \end{enumerate}
\end{proof}

We now give an example to show that the affinization and horizontal trace are \emph{not} isomorphic (or even equivalent) in general.

\begin{eg} \label{golf}
    Let $\cC$ be the free monoidal category on one generating object $\go$.  Thus $\Hom_\cC(\go^{\otimes m}, \go^{\otimes n})$ is empty when $m \ne n$ and consists of only $1_\go^{\otimes n}$ when $m=n$.  Then $\Aff(\cC)$ is generated by the morphisms $\xi_{\go^{\otimes m}, \go^{\otimes n}} \colon \go^{\otimes (m+n)} \to \go^{\otimes (m+n)}$, $m,n \in \N$, and their inverses.  In particular, since $\xi_{\one,\one} = 1_\one$ by \cref{cable}, we have $\End_{\Aff(\cC)}(\one) = \{1_\one\}$.  However, $\End_{\htr(\cC)}(\one) = \{[\go^{\otimes n}, \one_\go^{\otimes n}] : n \in \N\}$ has countably many elements.  In terms of cylindrical diagrams, $\htr(\cC)$ contains closed string diagrams wrapping around the cylinder, while $\Aff(\cC)$ does not.

    On the other hand, $\End_{\Aff(\cC)}(\go) = \{\xi_\go^n : n \in \Z\}$ is an infinite cyclic group, while $\End_{\htr(\cC)}(\go) = \{[\go^{\otimes n},1_\go^{\otimes (n+1)}] : n \in \N\}$ is an infinite cyclic monoid (whose generator is not invertible).  In terms of cylindrical diagrams, $\End_{\Aff(\cC)}(\go)$ contains strands that enter at the bottom of the cylinder, wrap around the cylinder in either direction, and exit out the top, while in $\End_{\htr(\cC)}(\go)$ such strands can only wrap in one direction.  This illustrates the asymmetry inherent in the definition of the horizontal trace.
\end{eg}

The next example illustrates how, in general, it is the affinization of $\cC$, and not its horizontal trace, that naturally corresponds to the category of $\cC$-diagrams on the cylinder or annulus.

\begin{eg} \label{eggs}
    As we saw in~\cref{annularbraids} the affinization $\Aff(\Braid(D))$ of the category of braids over the disc is the category $\Braid(A)$ of braids over the annulus.  On the other hand, $\htr(\Braid(D))$ is quite different.  Similar to the situation in \cref{golf}, the ``braids'' in $\htr(\Braid(D))$ can only wrap in one direction around the cylinder, and one also has closed components wrapping around the cylinder.
\end{eg}

\section{The vertical trace\label{sec:vtr}}

The \emph{vertical trace}, or simply the \emph{trace}, of a $\kk$-linear category $\cC$ is the $\kk$-module
\begin{equation} \label{tracedef}
    \vtr(\cC) := \left( \bigoplus_{X \in \Ob(\cC)} \End_\cC(X) \right) / \Span_\kk\{ f \circ g - g \circ f\},
\end{equation}
where $f$ and $g$ run through all pairs of morphisms $f \colon X \to Y$ and $g \colon Y \to X$ in $\cC$.  We let $[f] \in \vtr(\cC)$ denote the class of an endomorphism $f \in \End_\cC(X)$.

\begin{rem}
    One can also consider the vertical trace of categories that are not necessarily linear.  In this case, one replaces \cref{tracedef} by the set
    \[
        \vtr(\cC) = \left( \bigsqcup_{X \in \Ob(\cC)} \End_\cC(X) \right)/\sim,
    \]
    where $\sim$ is the equivalence relation generated by $f \circ g \sim g \circ f$ for all pairs of morphisms $f \colon X \to Y$ and $g \colon Y \to X$ in $\cC$.  Since most of our examples of interest are linear categories, we work in the linear setting in this section.  However, all of the results go through in the not-necessarily-linear setting with the obvious modifications.
\end{rem}

If $\cC$ is strict pivotal (in particular, this means that it is strict monoidal), we can naturally think of the trace as consisting of diagrams on the annulus.  In particular, if $f$ is an endomorphism in $\cC$, then we picture $[f]$ as
\[
  \begin{tikzpicture}[anchorbase]
    \filldraw[thick,draw=green!60!black,fill=green!20!white] (0.8,0) arc(0:360:0.8);
    \filldraw[thick,draw=green!60!black,fill=white] (0.2,0) arc(0:360:0.2);
    \draw (0.5,0) arc(0:360:0.5);
    \coupon{-0.5,0}{f};
  \end{tikzpicture}
  \ .
\]
The fact that $[f \circ g] = [g \circ f]$ in $\vtr(\cC)$ then corresponds to the fact we can slide diagrams around the annulus:
\[
  \begin{tikzpicture}[anchorbase]
    \filldraw[thick,draw=green!60!black,fill=green!20!white] (1,0) arc(0:360:1);
    \filldraw[thick,draw=green!60!black,fill=white] (0.2,0) arc(0:360:0.2);
    \draw (0.6,0) arc(0:360:0.6);
    \filldraw[draw=black,fill=white] (-0.3,0) arc(0:360:0.275);
    \node at (-0.6,0) {\dotlabel{f \circ g}};
  \end{tikzpicture}
  \ =\
  \begin{tikzpicture}[anchorbase]
    \filldraw[thick,draw=green!60!black,fill=green!20!white] (1,0) arc(0:360:1);
    \filldraw[thick,draw=green!60!black,fill=white] (0.2,0) arc(0:360:0.2);
    \draw (0.6,0) arc(0:360:0.6);
    \filldraw[draw=black,fill=white] (-0.35,0.25) arc(0:360:0.2);
    \filldraw[draw=black,fill=white] (-0.35,-0.25) arc(0:360:0.2);
    \node at (-0.55,0.25) {\dotlabel{f}};
    \node at (-0.55,-0.25) {\dotlabel{g}};
  \end{tikzpicture}
  \ =\
  \begin{tikzpicture}[anchorbase]
    \filldraw[thick,draw=green!60!black,fill=green!20!white] (1,0) arc(0:360:1);
    \filldraw[thick,draw=green!60!black,fill=white] (0.2,0) arc(0:360:0.2);
    \draw (0.6,0) arc(0:360:0.6);
    \filldraw[draw=black,fill=white] (-0.4,0) arc(0:360:0.2);
    \node at (-0.6,0) {\dotlabel{g}};
    \filldraw[draw=black,fill=white] (0.8,0) arc(0:360:0.2);
    \node at (0.6,0) {\dotlabel{f}};
  \end{tikzpicture}
  \ =\
  \begin{tikzpicture}[anchorbase]
    \filldraw[thick,draw=green!60!black,fill=green!20!white] (1,0) arc(0:360:1);
    \filldraw[thick,draw=green!60!black,fill=white] (0.2,0) arc(0:360:0.2);
    \draw (0.6,0) arc(0:360:0.6);
    \filldraw[draw=black,fill=white] (-0.35,0.25) arc(0:360:0.2);
    \filldraw[draw=black,fill=white] (-0.35,-0.25) arc(0:360:0.2);
    \node at (-0.55,0.25) {\dotlabel{g}};
    \node at (-0.55,-0.25) {\dotlabel{f}};
  \end{tikzpicture}
  \ =\
  \begin{tikzpicture}[anchorbase]
    \filldraw[thick,draw=green!60!black,fill=green!20!white] (1,0) arc(0:360:1);
    \filldraw[thick,draw=green!60!black,fill=white] (0.2,0) arc(0:360:0.2);
    \draw (0.6,0) arc(0:360:0.6);
    \filldraw[draw=black,fill=white] (-0.3,0) arc(0:360:0.275);
    \node at (-0.6,0) {\dotlabel{g \circ f}};
  \end{tikzpicture}
  \ .
\]
We also have that $\vtr(\cC)$ is a $\kk$-algebra, with multiplication given by $[f] \cdot [g] = [f \otimes g]$.  This corresponds to nesting of annuli:
\[
  \begin{tikzpicture}[anchorbase]
    \filldraw[thick,draw=green!60!black,fill=green!20!white] (1,0) arc(0:360:1);
    \filldraw[thick,draw=green!60!black,fill=white] (0.2,0) arc(0:360:0.2);
    \draw (0.6,0) arc(0:360:0.6);
    \filldraw[draw=black,fill=white] (-0.4,0) arc(0:360:0.2);
    \node at (-0.6,0) {\dotlabel{f}};
  \end{tikzpicture}
  \ \cdot\
  \begin{tikzpicture}[anchorbase]
    \filldraw[thick,draw=green!60!black,fill=green!20!white] (1,0) arc(0:360:1);
    \filldraw[thick,draw=green!60!black,fill=white] (0.2,0) arc(0:360:0.2);
    \draw (0.6,0) arc(0:360:0.6);
    \filldraw[draw=black,fill=white] (-0.4,0) arc(0:360:0.2);
    \node at (-0.6,0) {\dotlabel{g}};
  \end{tikzpicture}
  \ =\
  \begin{tikzpicture}[anchorbase]
    \filldraw[thick,draw=green!60!black,fill=green!20!white] (1.5,0) arc(0:360:1.5);
    \filldraw[thick,draw=green!60!black,fill=white] (0.2,0) arc(0:360:0.2);
    \draw (0.6,0) arc(0:360:0.6);
    \filldraw[draw=black,fill=white] (-0.4,0) arc(0:360:0.2);
    \node at (-0.6,0) {\dotlabel{g}};
    \draw (1.1,0) arc(0:360:1.1);
    \filldraw[draw=black,fill=white] (-0.9,0) arc(0:360:0.2);
    \node at (-1.1,0) {\dotlabel{f}};
  \end{tikzpicture}
  \ .
\]
We refer the reader to \cite{BGHL14} for further details and properties of the trace.

Now suppose $\cC$ is a strict $\kk$-linear monoidal category.  Then $\Aff(\cC)$ is a $\kk$-linear category, and we can pass to its vertical trace $\vtr(\Aff(\cC)$.  If $\cC$ is strict pivotal, this naturally corresponds to string diagrams drawn on the torus.  For example, if $X \in \Ob(\cC)$, then $[\xi_X]$ corresponds to the diagram
\[
    \begin{tikzpicture}[centerzero]
        \draw[\rectcolor] (-0.3,-0.3) rectangle (0.3,0.3);
        \draw (0,-0.3) node[anchor=north] {\dotlabel{X}} to[out=up,in=200] (0.3,0);
        \draw (-0.3,0) to[out=20,in=south] (0,0.3) node[anchor=south] {\dotlabel{X}};
    \end{tikzpicture}
    \ ,
\]
where we identity the vertical edges and the horizontal edges, thereby obtaining a diagram on the torus.

If $\cC$ is a braided strict $\kk$-linear monoidal category, then $\Aff(\cC)$ is a strict monoidal category, and hence $\vtr(\Aff(\cC))$ is a $\kk$-algebra.  If $\cC$ is also strict pivotal, the product on $\vtr(\Aff(\cC))$ corresponds to the nesting of tori.  This can be visualized as in \cref{bridge}, except that we only consider classes of endomorphisms and we identity the top and bottom of the diagrams (as well as the dashed vertical edges).  For example, for $X, Y \in \Ob(\cC)$, we have
\[
    [\xi_X] \cdot [1_Y]
    =
    \begin{tikzpicture}[centerzero]
        \draw[\rectcolor] (-0.3,-0.3) rectangle (0.3,0.3);
        \draw (0,-0.3) node[anchor=north] {\dotlabel{X}} to[out=up,in=200] (0.3,0);
        \draw (-0.3,0) to[out=20,in=south] (0,0.3) node[anchor=south] {\dotlabel{X}};
    \end{tikzpicture}
    \cdot
    \begin{tikzpicture}[centerzero]
        \draw[\rectcolor] (-0.3,-0.3) rectangle (0.3,0.3);
        \draw (0,-0.3) node[anchor=north] {\dotlabel{Y}} to (0,0.3);
    \end{tikzpicture}
    =
    \begin{tikzpicture}[centerzero]
        \draw[\rectcolor] (-0.45,-0.3) rectangle (0.45,0.3);
        \draw (0.25,-0.3) node[anchor=north] {\dotlabel{Y}} to (0.25,0.3);
        \draw[wipe] (-0.15,-0.3) to[out=up,in=200] (0.45,0);
        \draw (-0.15,-0.3) node[anchor=north] {\dotlabel{X}} to[out=up,in=200] (0.45,0);
        \draw (-0.45,0) to[out=20,in=down] (-0.15,0.3) node[anchor=south] {\dotlabel{X}};
    \end{tikzpicture}
    = [\xi_{Y,X} \circ \beta_{X,Y}]
    \ .
\]

The action described in \cref{sec:action} induces an action of the trace, as we now explain.

\begin{lem} \label{ice}
    If $\cC$ and $\cD$ are $\kk$-linear categories, we have an isomorphism of $\kk$-modules
    \begin{equation} \label{sphinx}
        \vtr(\cC \boxtimes \cD) \xrightarrow{\cong} \vtr(\cC) \otimes \vtr(\cD),\quad
        [f \otimes g] \mapsto [f] \otimes [g],
    \end{equation}
    for $f$ an endomorphism in $\cC$ and $g$ an endomorphism in $\cD$, extended by $\kk$-linearity.  If $\cC$ and $\cD$ are also monoidal, then \cref{sphinx} is an isomorphism of $\kk$-algebras.
\end{lem}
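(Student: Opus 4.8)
The plan is to present both $\vtr(\cC \boxtimes \cD)$ and $\vtr(\cC) \otimes_\kk \vtr(\cD)$ as quotients of one and the same $\kk$-module and then match up the relations. Set $E_\cC := \bigoplus_{X \in \Ob(\cC)} \End_\cC(X)$ and $E_\cD := \bigoplus_{Y \in \Ob(\cD)} \End_\cD(Y)$, and let $T_\cC \subseteq E_\cC$, $T_\cD \subseteq E_\cD$ be the $\kk$-spans of the commutators $f \circ g - g \circ f$, so that $\vtr(\cC) = E_\cC/T_\cC$ and $\vtr(\cD) = E_\cD/T_\cD$. First I would observe that, by the definition of the morphism spaces in $\cC \boxtimes \cD$ and the fact that $\otimes_\kk$ commutes with direct sums,
\[
    \bigoplus_{(X,Y) \in \Ob(\cC \boxtimes \cD)} \End_{\cC \boxtimes \cD}((X,Y))
    = \bigoplus_{X,Y} \bigl( \End_\cC(X) \otimes_\kk \End_\cD(Y) \bigr)
    \cong E_\cC \otimes_\kk E_\cD .
\]
By right-exactness of the tensor product, $(E_\cC \otimes_\kk E_\cD)/(T_\cC \otimes_\kk E_\cD + E_\cC \otimes_\kk T_\cD) \cong (E_\cC/T_\cC) \otimes_\kk (E_\cD/T_\cD) = \vtr(\cC) \otimes_\kk \vtr(\cD)$, and the canonical quotient map sends the class of $f \otimes g$ to $[f] \otimes [g]$. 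So everything reduces to the submodule identity
\[
    T_{\cC \boxtimes \cD} = T_\cC \otimes_\kk E_\cD + E_\cC \otimes_\kk T_\cD
    \quad \text{inside } E_\cC \otimes_\kk E_\cD .
\]

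For $\supseteq$, given a generator $(f \circ g - g \circ f) \otimes h$ with $f \colon X \to X'$, $g \colon X' \to X$ in $\cC$ and $h \in \End_\cD(Y)$, I would use that composition in $\cC \boxtimes \cD$ is componentwise: with $F = f \otimes 1_Y$ and $G = g \otimes h$ one gets $F \circ G - G \circ F = (f \circ g - g \circ f) \otimes h \in T_{\cC \boxtimes \cD}$, and symmetrically for the other summand. For $\subseteq$, since every morphism of $\cC \boxtimes \cD$ is a $\kk$-linear combination of simple tensors and $T_{\cC \boxtimes \cD}$ is a $\kk$-span, it suffices to treat a commutator of simple tensors $F = a \otimes b$, $G = c \otimes d$; here $F \circ G - G \circ F = (a \circ c) \otimes (b \circ d) - (c \circ a) \otimes (d \circ b)$, and the telescoping identity
\begin{align*}
    (a \circ c) \otimes (b \circ d) - (c \circ a) \otimes (d \circ b)
    &= \bigl( (a \circ c) - (c \circ a) \bigr) \otimes (b \circ d) \\
    &\quad + (c \circ a) \otimes \bigl( (b \circ d) - (d \circ b) \bigr)
\end{align*}
exhibits it as a sum of an element of $T_\cC \otimes_\kk E_\cD$ and an element of $E_\cC \otimes_\kk T_\cD$. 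This establishes the submodule identity, hence the $\kk$-module isomorphism with the asserted formula. I expect this step --- in particular the $\subseteq$ inclusion, where one must first reduce a general commutator to simple tensors before applying the telescoping identity --- to be the only place requiring genuine care, though it amounts to standard bookkeeping.

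For the algebra statement, when $\cC$ and $\cD$ are monoidal the monoidal product on $\cC \boxtimes \cD$ is again componentwise, so on classes of simple tensors the multiplication of $\vtr(\cC \boxtimes \cD)$ reads $[a \otimes b] \cdot [a' \otimes b'] = [(a \otimes_\cC a') \otimes (b \otimes_\cD b')]$, which the isomorphism carries to $([a] \cdot [a']) \otimes ([b] \cdot [b']) = ([a] \otimes [b]) \cdot ([a'] \otimes [b'])$ in the tensor-product algebra $\vtr(\cC) \otimes_\kk \vtr(\cD)$. As both products are $\kk$-bilinear and simple tensors span, the map is an algebra homomorphism; it visibly preserves the unit, so it is an algebra isomorphism.
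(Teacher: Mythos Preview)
Your proof is correct and amounts to the same argument as the paper's, organized slightly differently. The paper constructs the map \cref{sphinx} and its inverse $[f]\otimes[g]\mapsto[f\otimes g]$ directly and checks well-definedness of each; your two inclusions $T_{\cC\boxtimes\cD}\subseteq T_\cC\otimes E_\cD + E_\cC\otimes T_\cD$ and $\supseteq$ are exactly those two well-definedness checks, repackaged as a kernel identity. The telescoping computation you give for $\subseteq$ is the same calculation the paper uses (implicitly) to show the forward map respects the trace relation, and your $\supseteq$ argument is what the paper leaves as a routine verification for the inverse map.
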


\begin{proof}
    For $f_1,f_2$ endomorphisms in $\cC$ and $g_1,g_2$ endomorphisms in $\cD$, we have
    \[
        ([f_1] \otimes [g_1]) \circ ([f_2] \otimes [g_2])
        = [f_1 \circ f_2] \otimes [g_1 \circ g_2]
        = [f_2 \circ f_1] \otimes [g_2 \circ g_1]
        = ([f_2] \otimes [g_2]) \circ ([f_1] \otimes [g_1]).
    \]
    Thus the map \cref{sphinx} is well defined.  It is similarly straightforward to verify that the map
    \[
        \vtr(\cC) \otimes \vtr(\cD) \to \vtr(\cC \boxtimes \cD),\quad
        [f] \otimes [g] \mapsto [f \otimes g],
    \]
    is also well defined, and inverse to \cref{sphinx}.
    \details{
        To see that this map is well-defined, we compute
        \[
            [(f_1 \circ f_2) \otimes g]
            = [(f_1 \otimes g) \circ (f_2 \otimes 1)]
            = [(f_2 \otimes 1) \circ (f_1 \otimes g)]
            = [(f_2 \circ f_1) \otimes g]
        \]
        and
        \[
            [f \otimes (g_1 \circ g_2)]
            = [(f \otimes g_1) \circ (1 \otimes g_2)]
            = [(1 \otimes g_2) \circ (f \otimes g_1)]
            = [f \otimes (g_2 \circ g_1)].
        \]
    }
\end{proof}

\begin{prop} \label{fire}
    Suppose $\cC$ is a strict $\kk$-linear monoidal category, $\cM$ is a balanced strict $\kk$-linear monoidal category, and $F \colon \cC \to \cM$ is a monoidal functor.  Then $\vtr(\cM)$ is a $\vtr(\Aff(\cC))$-module, with action given by
    \begin{equation}
        [f] \cdot [g] = [f \cdot g],\quad
        f \in \bigoplus_{X \in \Ob(\cC)} \End_\cC(X),\
        g \in \bigoplus_{X \in \Ob(\cM)} \End_\cM(X),
    \end{equation}
    where, on the right-hand side, $f \cdot g$ denotes the action from \cref{salamander}.
\end{prop}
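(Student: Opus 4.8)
The plan is to deduce the module structure formally from the action of \cref{salamander} by applying the trace functor and invoking \cref{ice}. Recall that the action of \cref{salamander} requires $\cC$ to be braided and $F$ braided monoidal; under these hypotheses $\Aff(\cC)$ is strict monoidal by \cref{afftensor}, so $\vtr(\Aff(\cC))$ is a $\kk$-algebra with product $[f]\cdot[f']=[f\otimes f']$. First I would repackage the action as a $\kk$-bilinear functor $\Aff(\cC)\boxtimes\cM\to\cM$, sending $(X,M)$ to $X\cdot M$ and $(f,g)$ to $f\cdot g$; this is $\kk$-bilinear (the action is a $\kk$-linear functor $\Aff(\cC)\to\cEnd_\kk(\cM)$ and the tensor product of $\cM$ is $\kk$-bilinear) and bifunctorial, the identity $(f_1\cdot g_1)\circ(f_2\cdot g_2)=(f_1\circ f_2)\cdot(g_1\circ g_2)$ having been checked in the proof of \cref{salamander}. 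Since $\vtr$ is a functor on $\kk$-linear categories — a composition-preserving additive functor $H$ induces $[h]\mapsto[H(h)]$, which respects the relation $[hk]=[kh]$ — applying $\vtr$ to this bifunctor and composing with the inverse of the isomorphism $\vtr(\Aff(\cC))\otimes_\kk\vtr(\cM)\xrightarrow{\cong}\vtr(\Aff(\cC)\boxtimes\cM)$ of \cref{ice} produces a $\kk$-linear map
\[
    \vtr(\Aff(\cC))\otimes_\kk\vtr(\cM)\to\vtr(\cM),\qquad [f]\otimes[g]\mapsto[f\cdot g],
\]
which is thereby well defined without further argument. It then remains only to verify the two module axioms.

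For unitality I would compute $[1_\one]\cdot[g]=[1_\one\cdot g]=[1_{F(\one)}\otimes g]$ for $g\in\End_\cM(M)$, and then, writing $\varepsilon\colon\one\to F(\one)$ for the unit constraint of $F$, use the interchange law to rewrite $1_{F(\one)}\otimes g=(\varepsilon\otimes 1_M)\circ g\circ(\varepsilon^{-1}\otimes 1_M)$; cyclicity of the trace, $[\phi\circ\psi]=[\psi\circ\phi]$, then gives $[1_{F(\one)}\otimes g]=[g\circ(\varepsilon^{-1}\otimes 1_M)\circ(\varepsilon\otimes 1_M)]=[g]$. For associativity I would use $([f]\cdot[f'])\cdot[g]=[f\otimes f']\cdot[g]=[(f\otimes f')\cdot g]$ and $[f]\cdot([f']\cdot[g])=[f\cdot(f'\cdot g)]$, and then observe that the endomorphisms $(f\otimes f')\cdot g$ and $f\cdot(f'\cdot g)$ of $\cM$ are conjugate by the coherence isomorphism $F(X)\otimes F(X')\to F(X\otimes X')$ tensored with the identity of the relevant object of $\cM$ — equivalently, by the monoidal-structure $2$-isomorphism of the action functor $\Aff(\cC)\to\cEnd_\kk(\cM)$ — so that cyclicity again identifies the two classes in $\vtr(\cM)$. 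Finally $\kk$-linearity extends everything from simple tensors.

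I expect the only real obstacle to be the bookkeeping in the associativity step: because $F$, and hence the action, is a strong but not strict monoidal functor, $(f\otimes f')\cdot g$ and $f\cdot(f'\cdot g)$ are genuinely endomorphisms of different (canonically isomorphic) objects of $\cM$, so one must identify the conjugating isomorphism precisely, check via naturality that it slides past $f\cdot(f'\cdot g)$, and only then apply cyclicity. Everything else is routine: bifunctoriality of the action is already established in the proof of \cref{salamander}, well-definedness of the module map is subsumed in the functoriality of $\vtr$ together with \cref{ice}, and associativity and unitality of the $\kk$-algebra $\vtr(\Aff(\cC))$ are standard.
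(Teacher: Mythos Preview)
Your approach is essentially the same as the paper's: repackage the action of \cref{salamander} as a $\kk$-linear functor $\Aff(\cC)\boxtimes\cM\to\cM$, pass to vertical traces, invoke \cref{ice}, and then deduce the module axioms from those of the original action. The paper dispatches the associativity and unitality in a single sentence (``follow from the corresponding properties of the action of $\Aff(\cC)$ on $\cM$''), whereas you spell out how the non-strictness of $F$ forces a conjugation that is absorbed by cyclicity of the trace; this extra care is warranted and does not change the argument. You also correctly flag the implicit hypothesis that $\cC$ be braided and $F$ braided monoidal, without which neither \cref{salamander} nor the algebra structure on $\vtr(\Aff(\cC))$ is available.
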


\begin{proof}
    By (the $\kk$-linear version of) \cref{salamander}, we have a $\kk$-linear functor $\Aff(\cC) \boxtimes \cM \to \cM$.  This induces a $\kk$-linear map $\vtr(\Aff(\cC) \boxtimes \cM) \to \vtr(\cM)$.  By \cref{ice}, this induces a $\kk$-linear map $\vtr(\Aff(\cC)) \otimes \vtr(\cM) \to \vtr(\cM)$.  The associativity and unity axioms follow from the corresponding properties of the action of $\Aff(\cC)$ on $\cM$.
\end{proof}

If $\cC$ is a balanced strict $\kk$-linear monoidal category, then we can take $\cM = \cC$ and $F$ the identity functor in \cref{fire} to see that $\vtr(\cC)$ is a $\vtr(\Aff(\cC))$-module.  If $\cC$ is strict pivotal, this action can be interpreted diagrammatically as follows.  As described above, the element $[f] \in \vtr(\Aff(\cC))$ can be viewed as a $\cC$-diagram on the torus, while $[g] \in \vtr(\cC)$ can be viewed as a $\cC$-diagram in the annulus.  Thickening the annulus, we view $[g]$ as a diagram in the solid torus.  Then $[f] \cdot [g]$ is obtained by placing this solid torus inside the torus carrying the diagram of $f$, viewing the result as a diagram in the solid torus, which we then project onto the annulus, using the braiding to formalize what it means for one strand to pass over another.

\begin{eg}
    Recall the framed HOMFLYPT skein category from \cref{subsec:HOMFLYPT}.  In the language of \cite{MS17}, $\vtr(\Aff(\OS(D;z,t,\delta))) = \vtr(\OS(A;z,t,\delta))$ is the \emph{HOMFLYPT skein algebra of the torus}.  As explained above, it acts on $\vtr(\OS(D;z,t,\delta))$, which is the \emph{skein of the solid torus}.  This action is studied in \cite{MS17}.
\end{eg}

Recall that the \emph{center} $Z(\cC)$ of a monoidal category $\cC$ is $\End_\cC(\one)$, the endomorphism algebra of the identity object.  The next result shows that if $\cC$ is a braided strict monoidal category (so that $\Aff(\cC)$ is a strict monoidal category by \cref{afftensor}) that is either left or right rigid, then the vertical trace of $\cC$ is isomorphic to the center of $\Aff(\cC)$.

\begin{theo} \label{cinnamon}
    \begin{enumerate}
        \item \label{rightroll} If $\cC$ is a right-rigid braided strict monoidal category, then the map
            \begin{equation} \label{rightrollin}
                \vtr(\cC) \to Z(\Aff(\cC)),\quad
                [f] \mapsto \epsilon_X \circ (f \otimes 1_{X^\vee}) \circ \xi_{X,X^\vee}^{-1} \circ \eta_X
                =
                \begin{tikzpicture}[centerzero]
                    \identify{-0.5}{-0.4}{0.5}{0.4};
                    \draw[->] (-0.5,0) to[out=-30,in=180] (-0.2,-0.25);
                    \draw[->] (-0.2,-0.25) to[out=0,in=down] (0,0) to[out=up,in=180] (0.2,0.25);
                    \draw (0.2,0.25) to[out=0,in=150] (0.5,0);
                    \coupon{0,0}{f};
                \end{tikzpicture}
                \ ,\ f \in \End_\cC(X),
            \end{equation}
            is an isomorphism of $\kk$-algebras.

        \item \label{leftroll} If $\cC$ is a left-rigid braided strict monoidal category, then the map
            \begin{equation} \label{leftrollin}
                \vtr(\cC) \to Z(\Aff(\cC)),\quad
                [f] \mapsto \epsilon_X' \circ (1_{\leftdual{X}} \otimes f) \circ \xi_{X,\leftdual{X}} \circ \eta_X'
                =
                \begin{tikzpicture}[centerzero]
                    \identify{-0.5}{-0.4}{0.5}{0.4};
                    \draw[->] (0.5,0) to[out=210,in=0] (0.2,-0.25);
                    \draw[->] (0.2,-0.25) to[out=180,in=down] (0,0) to[out=up,in=0] (-0.2,0.25);
                    \draw (-0.2,0.25) to[out=180,in=30] (-0.5,0);
                    \coupon{0,0}{f};
                \end{tikzpicture}
                \ ,\ f \in \End_\cC(X),
            \end{equation}
            is an isomorphism of $\kk$-algebras.
    \end{enumerate}
\end{theo}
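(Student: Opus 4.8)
The plan is to deduce both statements from \cref{donut}, via the horizontal trace. First I would observe that a braided strict monoidal category that is right rigid or left rigid is automatically rigid, so in each part of the theorem $\cC$ is rigid and \cref{donut} applies: the functors $\Theta\colon\Aff(\cC)\to\htr(\cC)$ and $\Theta'\colon\htr(\cC)\to\Aff(\cC)$ are mutually inverse isomorphisms of (strict) monoidal categories. (Throughout this section $\cC$ is $\kk$-linear; I would accordingly work with the evident $\kk$-linear version of $\htr(\cC)$, whose hom-module $\Hom_{\htr(\cC)}(X,Y)$ is $\bigoplus_Z\Hom_\cC(X\otimes Z,Z\otimes Y)$ modulo the $\kk$-span of the relations in \cref{htrel}; \cref{donut} and its proof go through verbatim.) Since $\Theta$ is an isomorphism of monoidal categories, it restricts to a $\kk$-algebra isomorphism $Z(\Aff(\cC))=\End_{\Aff(\cC)}(\one)\xrightarrow{\ \cong\ }\End_{\htr(\cC)}(\one)$, where both sides carry the product given by composition (equivalently, by the tensor product, since these agree on endomorphisms of $\one$).

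The second ingredient is the identification of $\End_{\htr(\cC)}(\one)$ with $\vtr(\cC)$ as $\kk$-algebras. A class $[Z,g]\in\Hom_{\htr(\cC)}(\one,\one)$ has $g\in\Hom_\cC(\one\otimes Z,Z\otimes\one)=\End_\cC(Z)$, and \cref{htrel} specialised to $X=Y=\one$ reads $[Z,m\circ n]=[Z',n\circ m]$ for $n\colon Z\to Z'$ and $m\colon Z'\to Z$, which is precisely the defining relation of $\vtr(\cC)$ in \cref{tracedef}. So $\iota\colon[g]\mapsto[Z,g]$ is a $\kk$-module isomorphism $\vtr(\cC)\xrightarrow{\ \cong\ }\End_{\htr(\cC)}(\one)$; moreover, by \cref{htcomp} and the interchange law the composite $[X,f]\circ[X',g]$ in $\End_{\htr(\cC)}(\one)$ equals $[X\otimes X',f\otimes g]$, which matches the product $[f]\cdot[g]=[f\otimes g]$ of $\vtr(\cC)$ (the same class results from the monoidal product \cref{fly}, since braidings with $\one$ are identities), so $\iota$ is an isomorphism of $\kk$-algebras.

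For part~(b) these two ingredients already suffice: the composite algebra isomorphism $\vtr(\cC)\xrightarrow{\iota}\End_{\htr(\cC)}(\one)\xrightarrow{\Theta'}Z(\Aff(\cC))$ sends $[f]$, for $f\in\End_\cC(X)$, to $\Theta'([X,f])$, and substituting $X=Y=\one$ (with $Z=X$) into the formula \cref{donut2} for $\Theta'$ yields exactly $\epsilon_X'\circ(1_{\leftdual{X}}\otimes f)\circ\xi_{X,\leftdual{X}}\circ\eta_X'$, i.e.\ the map \cref{leftrollin}. For part~(a) I would apply $\Theta$ to the right-hand side of \cref{rightrollin}: using \cref{donut1} together with $\Theta(m)=[\one,m]$ for morphisms $m$ of $\cC$, one has $\Theta(\eta_X)=[\one,\eta_X]$, $\Theta(\xi_{X,X^\vee}^{-1})=[X^\vee,1_{X^\vee\otimes X\otimes X^\vee}]$, $\Theta(f\otimes 1_{X^\vee})=[\one,f\otimes 1_{X^\vee}]$ and $\Theta(\epsilon_X)=[\one,\epsilon_X]$, and composing these four classes with \cref{htcomp} gives $[X^\vee,h]$ where $h=(1_{X^\vee}\otimes\epsilon_X)\circ(1_{X^\vee}\otimes f\otimes 1_{X^\vee})\circ(\eta_X\otimes 1_{X^\vee})$; by the zigzag identities \cref{zigright} this $h$ is the right mate $f^\vee\in\End_\cC(X^\vee)$. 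Hence $\Theta$ sends the map \cref{rightrollin} to $\iota\circ D$, where $D\colon\vtr(\cC)\to\vtr(\cC)$, $[g]\mapsto[g^\vee]$, is the right-duality operation. I would then check that $D$ is a $\kk$-algebra automorphism of $\vtr(\cC)$: it is well defined because $[q^\vee p^\vee]=[p^\vee q^\vee]$ in $\vtr(\cC)$ for $p\colon X\to Y$ and $q\colon Y\to X$; it is multiplicative because $(f\otimes g)^\vee=g^\vee\otimes f^\vee$ and $\vtr(\cC)$ is commutative; it is unital because $\one\cong\one^\vee$; and it is invertible, with inverse induced by left mates (which exist since $\cC$ is rigid). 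It follows that the map \cref{rightrollin} equals $\Theta^{-1}\circ\iota\circ D$, a composite of $\kk$-algebra isomorphisms --- equivalently, it is the isomorphism of part~(b) precomposed with $D$ --- and is therefore a $\kk$-algebra isomorphism.

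I expect the main obstacle to be the diagram chase in the last paragraph: carrying the intermediate objects correctly through the composition rule \cref{htcomp}, recognising the resulting endomorphism of $X^\vee$ as $f^\vee$ by means of the zigzag relations, and verifying that the duality operation $D$ is indeed an algebra automorphism of $\vtr(\cC)$. A subsidiary point that must be handled carefully is the passage to the $\kk$-linear horizontal trace and the check that \cref{donut} survives it, since it is this that permits the identification of the $\kk$-algebra $\vtr(\cC)$ with $\End_{\htr(\cC)}(\one)$.
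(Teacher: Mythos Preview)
Your proposal is correct and takes a genuinely different route from the paper's own argument.

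The paper proves part~\ref{rightroll} directly, without invoking \cref{donut} at all. It first checks well-definedness of \cref{rightrollin} by taking $f\colon X\to Y$, $g\colon Y\to X$, passing to the right mate $h=f^\vee$, and using the coil relation \cref{whip} to slide the coupon around the cylinder, obtaining the diagrammatic identity that the image of $[f\circ g]$ equals the image of $[g\circ f]$. It then writes down an explicit inverse: any element of $Z(\Aff(\cC))$ is a linear combination of closed cylindrical loops carrying a coupon $f\in\End_\cC(X)$, and one sends such a loop to $[f]\in\vtr(\cC)$; well-definedness of this inverse is declared analogous. Part~\ref{leftroll} is dispatched as symmetric.

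Your approach is more structural: you factor the map through the monoidal isomorphism $\Theta'\colon\htr(\cC)\to\Aff(\cC)$ and the tautological identification $\vtr(\cC)\cong\End_{\htr(\cC)}(\one)$. For part~\ref{leftroll} this is essentially a one-line substitution into \cref{donut2}; for part~\ref{rightroll} you must additionally compose with the duality automorphism $D$ of $\vtr(\cC)$ and verify that $D$ is an algebra automorphism, which you do (using commutativity of $\vtr(\cC)$ for braided $\cC$). What your route buys is that the $\kk$-algebra homomorphism property comes for free from $\Theta'$ being monoidal, whereas the paper's direct argument leaves this implicit in the diagrammatics. What the paper's route buys is a concrete description of the inverse and independence from the machinery of \cref{donut}; it also makes clear that only one-sided rigidity is actually being used in the diagram manipulation, even though (as you correctly note) the braiding makes the distinction moot.
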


\begin{proof}
    We give the proof of \cref{rightroll}, since the proof of \cref{leftroll} is analogous.  Suppose $\cC$ is a right-rigid braided strict monoidal category.  Recall that the \emph{right mate} of a morphism $f \colon X \to Y$ is the morphism
    \[
        f^\vee
        =
        \begin{tikzpicture}[centerzero]
            \draw[->] (-0.4,0.5) node[anchor=south]{\dotlabel{X}} -- (-0.4,-0.1) arc(180:360:0.2) -- (0,0.1) arc(180:0:0.2) -- (0.4,-0.5) node[anchor=north] {\dotlabel{Y}};
            \coupon{0,0}{f};
        \end{tikzpicture}
        \colon Y^\vee \to X^\vee.
    \]
    Suppose $f \colon X \to Y$, $g \colon Y \to X$ are morphisms in $\cC$.  Letting $h := f^\vee$, we have
    \[
        \begin{tikzpicture}[centerzero]
            \draw[\rectcolor] (-0.4,-0.6) rectangle (0.4,0.6);
            \draw[->] (-0.4,0) to[out=-30,in=180] (-0.1,-0.5) arc(270:360:0.1) -- (0,0.4) arc(180:90:0.1) to[out=0,in=150] (0.4,0);
            \coupon{0,0.2}{f};
            \coupon{0,-0.2}{g};
        \end{tikzpicture}
        \overset{\cref{zigright}}{=}
        \begin{tikzpicture}[centerzero]
            \draw[\rectcolor] (-0.4,-0.6) rectangle (0.6,0.6);
            \draw[->] (-0.4,0) to[out=-30,in=180] (-0.1,-0.5) arc(270:360:0.1) -- (0,0.3) arc(180:0:0.2) to[out=270,in=150] (0.6,0);
            \coupon{0.38,0.3}{h};
            \coupon{0,-0.2}{g};
        \end{tikzpicture}
        =
        \begin{tikzpicture}[centerzero]
            \draw[\rectcolor] (-0.6,-0.6) rectangle (0.4,0.6);
            \draw[->] (-0.6,0) to[out=-30,in=180] (-0.4,-0.3) arc(180:360:0.2) -- (0,0.4) arc(180:90:0.1) to[out=0,in=150] (0.4,0);
            \coupon{-0.38,-0.3}{h};
            \coupon{0,0.2}{g};
        \end{tikzpicture}
        \overset{\cref{zigright}}{=}
        \begin{tikzpicture}[centerzero]
            \draw[\rectcolor] (-0.4,-0.6) rectangle (0.4,0.6);
            \draw[->] (-0.4,0) to[out=-30,in=180] (-0.1,-0.5) arc(270:360:0.1) -- (0,0.4) arc(180:90:0.1) to[out=0,in=150] (0.4,0);
            \coupon{0,0.2}{g};
            \coupon{0,-0.2}{f};
        \end{tikzpicture}
        \ ,
    \]
    where the second equality uses the analogue of \cref{whip} for the inverse coil.  It follows that the map \cref{rightrollin} is well defined.

    Now, viewing morphisms in $\Aff(\cC)$ as string diagrams on the cylinder, every element of $Z(\Aff(\cC))$ is a linear combination of elements of the form
    \[
        \begin{tikzpicture}[centerzero]
            \identify{-0.5}{-0.4}{0.5}{0.4};
            \draw[->] (-0.5,0) to[out=-30,in=180] (-0.2,-0.25);
            \draw[->] (-0.2,-0.25) to[out=0,in=down] (0,0) to[out=up,in=180] (0.2,0.25);
            \draw (0.2,0.25) to[out=0,in=150] (0.5,0);
            \coupon{0,0}{f};
        \end{tikzpicture}
        \ ,\quad f \in \End_\cC(X).
    \]
    We define an inverse to \cref{rightrollin} by sending this element to $[f]$.  The argument that this inverse is well defined is analogous to the argument given above that \cref{rightrollin} is well defined.
\end{proof}

\section{Affinization of 2-categories\label{sec:2aff}}

In this final section, we briefly describe how the concept of affinization can be generalized to the setting of 2-categories.  In a 2-category, we will use juxtaposition to denote horizontal composition of 1-morphisms and 2-morphisms, and we will use $\circ$ to denote vertical composition.

\begin{defin}[Affinization of a 2-category] \label{walrus}
    The \emph{affinization} $\Aff(\fC)$ of a 2-category $\fC$ is the category defined as follows. Objects of $\Aff(\fC)$ are 1-endomorphisms $f \colon x \to x$ in $\fC$.  The morphisms of $\Aff(\fC)$ are obtained from the class of 2-morphisms $\sigma \colon f \to g$ in $\fC$ between 1-endomorphisms by adjoining invertible 2-morphisms
    \[
        \xi_{f,g} \colon fg \to gf,\quad f \colon y \to x,\ g \colon x \to y,\quad x,y \in \Ob(\fC),
    \]
    and imposing the relations
    \begin{gather}
        \xi_{f,gh} = \xi_{hf,g} \circ \xi_{fg,h}, \\
        \xi_{f_2,g_2} \circ (\tau \sigma) = (\sigma \tau) \circ \xi_{f_1,g_1},
    \end{gather}
    for all $f \colon z \to x$, $g \colon y \to z$, $h \colon x \to y$, $x,y,z \in \Ob(\fC)$, and all $\sigma \colon g_1 \to g_2$, $\tau \colon f_1 \to f_2$ for $g_1,g_2 \colon x \to y$, $f_1,f_2 \colon y \to x$, $x,y \in \Ob(\fC)$.
\end{defin}

The reader should compare \cref{walrus} to \cref{affdef}.  In particular, a strict monoidal category is the same as a 2-category with one object.  In this case, \cref{walrus,affdef} coincide.

As for the affinization of strict monoidal categories, morphisms in $\Aff(\fC)$ can be naturally interpreted as diagrams on the cylinder.  Cutting open the cylinder as in \cref{sec:affinize}, we draw $\xi_{f,g}$ and $\xi_{f,g}^{-1}$ for $f \colon y \to x$, $g \colon x \to y$ as the string diagrams
\[
    \xi_{f,g}
    =
    \begin{tikzpicture}[anchorbase]
        \identify{-0.7}{-0.5}{0.7}{0.5};
        \draw (-0.3,-0.5) node[anchor=north] {\dotlabel{f}} -- (0.3,0.5) node[anchor=south] {\dotlabel{f}};
        \draw (0.3,-0.5) node[anchor=north] {\dotlabel{g}} to[out=up,in=200] (0.7,0);
        \draw (-0.7,0) to[out=20,in=down] (-0.3,0.5) node[anchor=south] {\dotlabel{g}};
        \node at (0.55,-0.3) {\dotlabel{x}};
        \node at (0.35,0.1) {\dotlabel{y}};
        \node at (-0.35,-0.1) {\dotlabel{x}};
        \node at (-0.55,0.3) {\dotlabel{y}};
    \end{tikzpicture}
    \ ,\qquad
    \xi_{f,g}^{-1}
    =
    \begin{tikzpicture}[anchorbase]
        \identify{-0.7}{-0.5}{0.7}{0.5};
        \draw (0.3,-0.5) node[anchor=north] {\dotlabel{f}} -- (-0.3,0.5) node[anchor=south] {\dotlabel{f}};
        \draw (-0.3,-0.5) node[anchor=north] {\dotlabel{g}} to[out=up,in=-20] (-0.7,0);
        \draw (0.7,0) to[out=160,in=down] (0.3,0.5) node[anchor=south] {\dotlabel{g}};
        \node at (-0.55,-0.3) {\dotlabel{y}};
        \node at (-0.35,0.1) {\dotlabel{x}};
        \node at (0.35,-0.1) {\dotlabel{y}};
        \node at (0.55,0.3) {\dotlabel{x}};
    \end{tikzpicture}
    \ .
\]
Regions of the cylinder are labeled by objects of $\fC$, strings, which are allowed to wrap around the cylinder, are labeled by 1-morphisms in $\fC$ and these strings can carry coupons labeled by 2-morphisms in $\fC$.  For example,
\[
    \begin{tikzpicture}[anchorbase]
        \identify{-1}{-0.7}{1}{0.7};
        \draw (0.3,-0.7) node[anchor=north] {\dotlabel{h}} to[out=up,in=225] (1,-0.3);
        \draw (0,-0.7) node[anchor=north] {\dotlabel{g}} \braidup (0.3,0) -- (0.3,0.7) node[anchor=south] {\dotlabel{s}};
        \draw (0.3,0) to[out=up,in=225] (1,0.3) node[anchor=west] {\dotlabel{p}};
        \draw (-0.3,-0.7) node[anchor=north] {\dotlabel{f}} -- (-0.3,0.2);
        \draw (-1,-0.3) node[anchor=east] {\dotlabel{h}} to[out=45,in=down] (-0.3,0.2);
        \draw (-1,0.3) node[anchor=east] {\dotlabel{p}} to[out=45,in=down] (-0.3,0.7);
        \coupon{-0.3,0.2}{\tau};
        \coupon{0.3,0}{\sigma};
        \node at (0.75,-0.55) {\dotlabel{x}};
        \node at (-0.65,-0.4) {\dotlabel{x}};
        \node at (0.75,-0.1) {\dotlabel{y}};
        \node at (0,0.5) {\dotlabel{y}};
        \node at (0.7,0.5) {\dotlabel{z}};
        \node at (-0.75,0.55) {\dotlabel{z}};
    \end{tikzpicture}
\]
is a 2-morphism in $\Aff(\fC)$ for $x,y,z \in \Ob(\fC)$, $f \colon y \to x$, $g \colon y \to y$, $h \colon x \to y$, $\sigma \colon g \to sp$, and $\tau \colon hf \to 1_y$.

The horizontal trace of a 2-category is defined in \cite[\S2.4]{BHLZ17}.  The discussion of \cref{sec:htr} has a straightforward generalization to the setting of 2-categories.  In particular, the horizontal trace of a 2-category $\fC$ is isomorphic to $\Aff(\fC)$ if $\fC$ is rigid, but the two concepts are different in general.  (Since a strict monoidal category can be considered as a 2-category with one object, \cref{golf} gives an example illustrating the difference.)  For an arbitrary 2-category, it is the affinization that naturally corresponds to the category of $\fC$-diagrams on the cylinder.


\bibliographystyle{alphaurl}
\bibliography{AffMonCat}

\begin{thebibliography}{BGHL14}

\bibitem[BGHL14]{BGHL14}
A.~Beliakova, Z.~Guliyev, K.~Habiro, and A.~D. Lauda.
\newblock Trace as an alternative decategorification functor.
\newblock {\em Acta Math. Vietnam.}, 39(4):425--480, 2014.
\newblock \href {https://doi.org/10.1007/s40306-014-0092-x}
  {\path{doi:10.1007/s40306-014-0092-x}}.

\bibitem[BHLv17]{BHLZ17}
A.~Beliakova, K.~Habiro, A.~D. Lauda, and M.~\v{Z}ivkovi\'{c}.
\newblock Trace decategorification of categorified quantum {$\mathfrak{sl}_2$}.
\newblock {\em Math. Ann.}, 367(1-2):397--440, 2017.
\newblock \href {https://doi.org/10.1007/s00208-016-1389-y}
  {\path{doi:10.1007/s00208-016-1389-y}}.

\bibitem[Bru17]{Bru17}
J.~Brundan.
\newblock Representations of the oriented skein category.
\newblock 2017.
\newblock \href {http://arxiv.org/abs/1712.08953} {\path{arXiv:1712.08953}}.

\bibitem[BSA18]{BSA18}
J.~Belletête and Y.~Saint-Aubin.
\newblock Fusion and monodromy in the {T}emperley--{L}ieb category.
\newblock {\em SciPost Phys.}, 5:41, 2018.
\newblock URL: \url{https://scipost.org/10.21468/SciPostPhys.5.4.041}, \href
  {https://doi.org/10.21468/SciPostPhys.5.4.041}
  {\path{doi:10.21468/SciPostPhys.5.4.041}}.

\bibitem[BSW20a]{BSW-qheis}
J.~Brundan, A.~Savage, and B.~Webster.
\newblock On the definition of quantum {H}eisenberg category.
\newblock {\em Algebra Number Theory}, 14(2):275--321, 2020.
\newblock \href {http://arxiv.org/abs/1812.04779} {\path{arXiv:1812.04779}},
  \href {https://doi.org/10.2140/ant.2020.14.275}
  {\path{doi:10.2140/ant.2020.14.275}}.

\bibitem[BSW20b]{BSW-qFrobHeis}
J.~Brundan, A.~Savage, and B.~Webster.
\newblock Quantum {F}robenius {H}eisenberg categorification.
\newblock 2020.
\newblock \href {http://arxiv.org/abs/2009.06690} {\path{arXiv:2009.06690}}.

\bibitem[CK18]{CK18}
S.~Cautis and J.~Kamnitzer.
\newblock Quantum {K}-theoretic geometric {S}atake: the {${\rm SL}_n$} case.
\newblock {\em Compos. Math.}, 154(2):275--327, 2018.
\newblock \href {https://doi.org/10.1112/S0010437X17007564}
  {\path{doi:10.1112/S0010437X17007564}}.

\bibitem[FY89]{FY89}
P.~J. Freyd and D.~N. Yetter.
\newblock Braided compact closed categories with applications to
  low-dimensional topology.
\newblock {\em Adv. Math.}, 77(2):156--182, 1989.
\newblock \href {https://doi.org/10.1016/0001-8708(89)90018-2}
  {\path{doi:10.1016/0001-8708(89)90018-2}}.

\bibitem[GL98]{GL98}
J.~J. Graham and G.~I. Lehrer.
\newblock The representation theory of affine {T}emperley-{L}ieb algebras.
\newblock {\em Enseign. Math. (2)}, 44(3-4):173--218, 1998.

\bibitem[GL03]{GL03}
J.~J. Graham and G.~I. Lehrer.
\newblock Diagram algebras, {H}ecke algebras and decomposition numbers at roots
  of unity.
\newblock {\em Ann. Sci. \'{E}cole Norm. Sup. (4)}, 36(4):479--524, 2003.
\newblock \href {https://doi.org/10.1016/S0012-9593(03)00020-X}
  {\path{doi:10.1016/S0012-9593(03)00020-X}}.

\bibitem[GRS20]{GRS20}
M.~Gao, H.~Rui, and L.~Song.
\newblock A basis theorem for the affine {K}auffman category and its cyclotomic
  quotients.
\newblock 2020.
\newblock \href {http://arxiv.org/abs/2006.09626} {\path{arXiv:2006.09626}}.

\bibitem[Kas95]{Kas95}
C.~Kassel.
\newblock {\em Quantum groups}, volume 155 of {\em Graduate Texts in
  Mathematics}.
\newblock Springer-Verlag, New York, 1995.
\newblock \href {https://doi.org/10.1007/978-1-4612-0783-2}
  {\path{doi:10.1007/978-1-4612-0783-2}}.

\bibitem[Kau90]{Kau90}
L.~H. Kauffman.
\newblock An invariant of regular isotopy.
\newblock {\em Trans. Amer. Math. Soc.}, 318(2):417--471, 1990.
\newblock \href {https://doi.org/10.2307/2001315} {\path{doi:10.2307/2001315}}.

\bibitem[Mor10]{Mor10}
H.~Morton.
\newblock A basis for the {B}irman--{W}enzl algebra.
\newblock 2010.
\newblock \href {http://arxiv.org/abs/1012.3116} {\path{arXiv:1012.3116}}.

\bibitem[MS17]{MS17}
H.~Morton and P.~Samuelson.
\newblock The {HOMFLYPT} skein algebra of the torus and the elliptic {H}all
  algebra.
\newblock {\em Duke Math. J.}, 166(5):801--854, 2017.
\newblock \href {https://doi.org/10.1215/00127094-3718881}
  {\path{doi:10.1215/00127094-3718881}}.

\bibitem[RS20]{RS20}
D.~Rosso and A.~Savage.
\newblock Quantum affine wreath algebras.
\newblock {\em Doc. Math.}, 25:425--456, 2020.
\newblock \href {http://arxiv.org/abs/1902.00143} {\path{arXiv:1902.00143}},
  \href {https://doi.org/10.25537/dm.2020v25.425-456}
  {\path{doi:10.25537/dm.2020v25.425-456}}.

\bibitem[Sel11]{Sel11}
P.~Selinger.
\newblock A survey of graphical languages for monoidal categories.
\newblock In {\em New structures for physics}, volume 813 of {\em Lecture Notes
  in Phys.}, pages 289--355. Springer, Heidelberg, 2011.
\newblock \href {https://doi.org/10.1007/978-3-642-12821-9_4}
  {\path{doi:10.1007/978-3-642-12821-9_4}}.

\bibitem[Shu94]{Shu94}
M.~C. Shum.
\newblock Tortile tensor categories.
\newblock {\em J. Pure Appl. Algebra}, 93(1):57--110, 1994.
\newblock \href {https://doi.org/10.1016/0022-4049(92)00039-T}
  {\path{doi:10.1016/0022-4049(92)00039-T}}.

\bibitem[Tur89]{Tur89}
V.~G. Turaev.
\newblock Operator invariants of tangles, and {$R$}-matrices.
\newblock {\em Izv. Akad. Nauk SSSR Ser. Mat.}, 53(5):1073--1107, 1135, 1989.
\newblock \href {https://doi.org/10.1070/IM1990v035n02ABEH000711}
  {\path{doi:10.1070/IM1990v035n02ABEH000711}}.

\bibitem[Yet01]{Yet01}
D.~N. Yetter.
\newblock {\em Functorial knot theory}, volume~26 of {\em Series on Knots and
  Everything}.
\newblock World Scientific Publishing Co., Inc., River Edge, NJ, 2001.
\newblock Categories of tangles, coherence, categorical deformations, and
  topological invariants.
\newblock \href {https://doi.org/10.1142/9789812810465}
  {\path{doi:10.1142/9789812810465}}.

\end{thebibliography}

\end{document}